\setlist{itemsep=4pt, topsep=4pt}
\def\@cite#1#2{{\m@th\upshape\bfseries%
[{#1\if@tempswa{\m@th\upshape\mdseries, #2}\fi}]}}
\theoremstyle{plain}
\newtheorem{thm}{Theorem}[section]
\newtheorem{cor}[thm]{Corollary}
\newtheorem{prop}[thm]{Proposition}
\newtheorem{lem}[thm]{Lemma}
\theoremstyle{definition}
\newtheorem{construction}[thm]{Construction}
\theoremstyle{remark}
\newtheorem{rem}[thm]{Remark}
\numberwithin{equation}{subsection}
\newcommand{\nc}{\newcommand}
\newcommand{\rnc}{\renewcommand}
\nc\bA{\mathbb{A}}
\nc\bB{\mathbb{B}}
\nc\bC{\mathbb{C}}
\nc\bD{\mathbb{D}}
\nc\bE{\mathbb{E}}
\nc\bF{\mathbb{F}}
\nc\bG{\mathbb{G}}
\nc\bH{\mathbb{H}}
\nc\bI{\mathbb{I}}
\nc{\bJ}{\mathbb{J}} 
\nc\bK{\mathbb{K}}
\nc\bL{\mathbb{L}}
\nc\bM{\mathbb{M}}
\nc\bN{\mathbb{N}}
\nc\bO{\mathbb{O}}
\nc\bP{\mathbb{P}}
\nc\bQ{\mathbb{Q}}
\nc\bR{\mathbb{R}}
\nc\bS{\mathbb{S}}
\nc\bT{\mathbb{T}}
\nc\bU{\mathbb{U}}
\nc\bV{\mathbb{V}}
\nc\bW{\mathbb{W}}
\nc\bY{\mathbb{Y}}
\nc\bX{\mathbb{X}}
\nc\bZ{\mathbb{Z}}
\nc\cA{\mathcal{A}}
\nc\cB{\mathcal{B}}
\nc\cC{\mathcal{C}}
\rnc\cD{\mathcal{D}}
\nc\cE{\mathcal{E}}
\nc\cF{\mathcal{F}}
\nc\cG{\mathcal{G}}
\rnc\cH{\mathcal{H}}
\nc\cI{\mathcal{I}}
\nc{\cJ}{\mathcal{J}} 
\nc\cK{\mathcal{K}}
\rnc\cL{\mathcal{L}}
\nc\cM{\mathcal{M}}
\nc\cN{\mathcal{N}}
\nc\cO{\mathcal{O}}
\nc\cP{\mathcal{P}}
\nc\cQ{\mathcal{Q}}
\rnc\cR{\mathcal{R}}
\nc\cS{\mathcal{S}}
\nc\cT{\mathcal{T}}
\nc\cU{\mathcal{U}}
\nc\cV{\mathcal{V}}
\nc\cW{\mathcal{W}}
\nc\cY{\mathcal{Y}}
\nc\cX{\mathcal{X}}
\nc\cZ{\mathcal{Z}}
\nc\bfA{\mathbf{A}}
\nc\bfB{\mathbf{B}}
\nc\bfC{\mathbf{C}}
\nc\bfD{\mathbf{D}}
\nc\bfE{\mathbf{E}}
\nc\bfF{\mathbf{F}}
\nc\bfG{\mathbf{G}}
\nc\bfH{\mathbf{H}}
\nc\bfI{\mathbf{I}}
\nc{\bfJ}{\mathbf{J}} 
\nc\bfK{\mathbf{K}}
\nc\bfL{\mathbf{L}}
\nc\bfM{\mathbf{M}}
\nc\bfN{\mathbf{N}}
\nc\bfO{\mathbf{O}}
\nc\bfP{\mathbf{P}}
\nc\bfQ{\mathbf{Q}}
\nc\bfR{\mathbf{R}}
\nc\bfS{\mathbf{S}}
\nc\bfT{\mathbf{T}}
\nc\bfU{\mathbf{U}}
\nc\bfV{\mathbf{V}}
\nc\bfW{\mathbf{W}}
\nc\bfY{\mathbf{Y}}
\nc\bfX{\mathbf{X}}
\nc\bfZ{\mathbf{Z}}
\newcommand{\xra}{\mathop{\longrightarrow}^}
\nc{\dmo}{\DeclareMathOperator}
\nc{\wt}{\widetilde}
\rnc{\Re}{\operatorname{Re}}
\rnc{\Im}{\operatorname{Im}}
\dmo{\rank}{rank}
\dmo{\End}{End}
\dmo{\Hom}{Hom}
\dmo{\Jac}{Jac}
\dmo{\Id}{Id}
\dmo{\Ann}{Ann}
\dmo{\Area}{Area}
\dmo{\CP}{\bC P^1}
\dmo{\rk}{rk}
\dmo{\rel}{rel}
\dmo{\ra}{\rightarrow}
\dmo{\AGL}{\mathrm{AGL}}
\dmo{\AO}{\mathrm{AO}}
\dmo{\Sym}{\mathrm{Sym}}
\dmo{\Hur}{\mathrm{Hur}}
\nc{\GL}{\mathrm{GL}^+(2, \bR)}
\dmo{\Mod}{Mod}
\dmo{\Aff}{Aff}
\nc{\pair}[1]{\langle #1 \rangle}
\nc{\red}[1]{\textcolor{red}{#1}}
\nc{\blue}[1]{\textcolor{blue}{#1}}
\nc{\para}[1]{\noindent\textbf{#1.}}
\dmo{\Dil}{Dil}
\title[Measures on strata of twisted $1$-forms]{Invariant measures on moduli spaces of twisted holomorphic $1$-forms and strata of dilation surfaces}
\author[Apisa]{Paul~Apisa}
\author[Salter]{Nick~Salter}
\subjclass[2010]{32G15, 37D40, 14H15}
\begin{document}
\begin{abstract}
The moduli space of twisted holomorphic $1$-forms on Riemann surfaces, equivalently dilation surfaces with scaling, admits a stratification and $\mathrm{GL}(2, \bR)$-action as in the case of moduli spaces of translation surfaces.  We produce an analogue of Masur-Veech measure, i.e. an $\mathrm{SL}(2, \bR)$-invariant Lebesgue class measure on strata or explicit covers thereof. This relies on a novel computation of cohomology with coefficients for the mapping class group. The computation produces a framed mapping class group invariant measure on representation varieties that naturally appear as the codomains of the periods maps that coordinatize strata.
\end{abstract}

\maketitle


\section{Introduction}

Let $\Sigma_{g}$ be a connected closed orientable genus $g$ surface with a set $C$ of $n \geq 1$ marked points, called \emph{cone points}. Set $\Sigma_{g,n} := \Sigma_g \setminus C$. The \emph{dilation group} $\Dil$ is the group of self-maps of $\bC$ of the form $az+b$ where $a \in \bR_{>0}$ and $b \in \bC$. A \emph{dilation surface structure} on $\Sigma_{g,n}$ is a branched $(\Dil, \bC)$-structure. This is an at atlas of charts to $\bC$ with transition functions in $\mathrm{Dil}$ subject to ``branching conditions" at the punctures. The branching conditions prescribe local geometric models of neighborhoods of the punctures. Formally, let $X$ be the Riemann surface structure that the dilation structure induces on $\Sigma_g$ and let $\nabla: \Omega_{X \setminus C} \ra \Omega_{X \setminus C}^{\otimes 2}$ be the connection that sends a holomorphic $1$-form, written in charts as $f(z) dz$, to $f'(z) dz^{\otimes 2}$, where $\Omega_{X \setminus C}$ is the sheaf of holomorphic $1$-forms. The branching conditions are equivalent to demanding that $\nabla$ extends to a connection $\Omega_X \ra \Omega_X(C) \otimes \Omega_X$. This connection defines a \emph{holonomy homomorphism} $\rho_{hol}: \pi_1(\Sigma_{g,n}) \ra \bR_{>0}$. 

The \emph{local holonomy} of $c \in C$ is $\rho_{hol}(\delta_c)$ where $\delta_c$ is a small positively oriented loop around $c$. Together with the cone angle around $c$, this determines the \emph{signature} $\kappa_c$ of $c$. Setting $\kappa = (\kappa_c)_{c \in C}$, $\cD(\kappa)$ is the \emph{stratum of dilation surfaces} with cone points of prescribed signature. A cone point is an \emph{integral zero} (or \emph{translation-like}) if it admits a local model that ``looks like" a neighborhood of a cone point on a finite area translation surface. Formally, $c \in C$ admits a neighborhood with a nonzero $\nabla$-flat holomorphic $1$-form. The germ of such a $1$-form at $c$ turns a dilation surface into a \emph{scaled dilation surface} when the horizontal foliation of the $1$-form agrees with that of the dilation surface. Let $\Omega^{tw} \cM_{g,n}(\kappa)$ be the collection of scaled dilation surfaces. This is an $\bR_{>0}$-bundle over $\cD(\kappa)$. In the sequel, we will always suppose that dilation surfaces have at least one integral zero. This can be arranged by marking a point if necessary. 

Recall that $\Omega \cM_g(\kappa)$ is the moduli space of pairs $(X, \omega)$ where $X$ is a Riemann surface of genus $g$ and $\omega$ is a holomorphic $1$-form with zeros of order $\kappa$. These elements can be identified with {\em translation surfaces}, from which $\Omega \cM_g(\kappa)$ inherits a $\mathrm{GL}(2, \bR)$ action. Likewise, elements of $\Omega^{tw} \cM_{g,n}(\kappa)$ can be identified with \emph{(real) twisted holomorphic $1$-forms} and their moduli space inherits a $\mathrm{GL}(2, \bR)$ action. Given a Riemann surface structure $Y$ on $\Sigma_{g,n}$ and a character $\chi: \pi_1(\Sigma_{g,n}) \ra \bR_{>0}$, a twisted holomorphic $1$-form on $Y$ is a holomorphic section of $K_Y \otimes L_\chi$ that is meromorphic at $C$, where $K_Y$ is the canonical bundle and $L_\chi$ is the flat line bundle on $Y$ determined by $\chi$. This section is determined by a holomorphic $1$-form $\omega$ on the universal cover of $Y$ so that $\gamma^* \omega = \chi(\gamma)^{-1} \omega$ for all $\gamma$ in $\pi_1(\Sigma_{g,n})$. A twisted holomorphic $1$-form $\omega$ belongs to $\Omega^{tw} \cM_{g,n}(\kappa)$ if, for all $c \in C$, $\chi(\delta_c)$ agrees with the local holonomy of $c$, as specified by $\kappa_c$, and if the order of vanishing of $\omega$ at $c$ agrees with the cone angle (see \cite{ABW1} for a formal definition). 

Of critical importance to studying the dynamics of $\mathrm{GL}(2, \bR)$ on $\Omega \cM_g(\kappa)$ is the existence of \emph{Masur-Veech measure}, an $\mathrm{SL}(2, \bR)$ ergodic invariant Lebesgue class measure constructed by Masur \cite{Ma2} and Veech \cite{V2}. The goal of this paper is to produce an analogue of Masur-Veech measure for $\Omega^{tw} \cM_{g,n}(\kappa)$. The question of whether or not such a measure exists was posed by Ghazouani \cite[Question 1]{Ghazouani-DilationTori} and Ghazouani-Boulanger\footnote{The authors write ``We believe the existence of such a measure to be of capital importance
as it would make the aforementioned analogy [between the dynamics of $\mathrm{SL}(2, \bR)$ on $\Omega^{tw} \cM_{g,n}(\kappa)$ and on infinite volume hyperbolic $3$-manifolds] robust enough to prove interesting theorems about the dynamics of the Teichm\"uller flow".} \cite{BoulangerGhazouani}. Our main result is the following. \vspace{-3mm}

\noindent \textbf{Theorem 1.1 (Paraphrase).} \emph{Up to taking an explicit cover, $\Omega^{tw} \cM_{g,n}(\kappa)$ admits an $\mathrm{SL}(2, \bR)$-invariant fully supported Lebesgue class measure.} \vspace{2mm}

To specify the cover, we must discuss the topology of $\Omega^{tw} \cM_{g,n}(\kappa)$. For any twisted $1$-form $\omega$, $\lambda \omega$ belongs to the same stratum for $\lambda \in \bC^\times$. So $\Omega^{tw} \cM_{g,n}(\kappa)$ is a $\bC^\times$-torsor. By \cite{ABW1}, the quotient $\Omega^{tw} \cM_{g,n}(\kappa)/\bC^\times$ is the framed mapping class group cover of $\cM_{g,n}$. 

Recall that a {\em framing} of $\Sigma_{g,n}$ is a trivialization of the tangent bundle. As discussed in Section \ref{S:MCG}, up to isotopy, a framing is encoded by a cohomology class $W \in H^1(UT \Sigma_{g,n},\bZ)$ which evaluates to $1$ on the class of the fiber, where $UT\Sigma_{g,n}$ is the unit tangent bundle of $\Sigma_{g,n}$. An {\em $N$-framing} for $N \in \bZ_{>0}$ is an element of $H^1(UT\Sigma_{g,n}; \bZ)$ where the homology class of the fiber is sent to $N$ instead of $1$. Given an $N$-framing $\xi$, the subgroup $\mathrm{Mod}_{g,n}[\xi]$ of the mapping class group $\mathrm{Mod}_{g,n}$ that preserves $\xi$ is called the \emph{$N$-framed mapping class group}. When $N = 1$, we just call it the \emph{framed mapping class group}.  

Since scaled dilation surfaces correspond to twisted $1$-forms, a scaled dilation surface specifies an untwisted holomorphic $1$-form in the neighborhood of any \emph{integral cone point}, i.e. one with trivial local holonomy. One may therefore refer to such points as ``zeros" or ``poles" and, in the latter case, to whether the pole is residueless or not. A \emph{restricted stratum} $\Omega^{tw} \cM_{g,n}(\kappa)$ is the subset of a stratum where each integral pole is prescribed to be either residueless or not. We add this data to the signature $\kappa$. Restricted strata remain $\mathrm{GL}(2, \bR)$-invariant. The main result is the following.

\begin{thm}\label{T:main:general}
An explicit cover of a restricted stratum of $\Omega^{tw} \cM_{g,n}$ with at least one integral zero admits a fully supported $\mathrm{SL}(2, \bR)$ invariant Lebesgue class measure on the complement of the locus of translation and homothety surfaces.

The cover corresponds to the subgroup of the framed mapping class group that does the following:
\begin{enumerate}
    \item It preserves the relative homology classes of a fixed tree of arcs connecting integral zeros to one another.
    \item It preserves the relative homology classes of a fixed set of arcs connecting a fixed integral zero to $S$, the set of non-integral cone points and integral poles with nonzero residue. 
    \item It preserves the winding numbers of the arcs in a fixed tree of arcs connecting points in $S$ to one another for some relative framing $\xi$.
\end{enumerate}
\end{thm}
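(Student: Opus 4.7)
The plan is to construct the measure via period coordinates, following the Masur--Veech template but supplying a new framed mapping class group invariant measure on the twisted-cohomology ``representation variety'' that serves as the codomain of the period map.

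First I would set up period coordinates on the cover. Fix the combinatorial data from the theorem statement: a tree $T_1$ of arcs between integral zeros, arcs $A$ from a fixed integral zero to each point of $S$, and a tree $T_2$ of arcs among the points of $S$ with winding numbers recorded relative to $\xi$. Given a scaled twisted $1$-form $\omega$ with character $\chi$, integrating $\omega$ along arcs in $T_1$ gives $\bC$-valued periods (the bundle $L_\chi$ is canonically trivial along such arcs, as their endpoints are integral zeros); integrals along arcs in $A$ together with the character $\chi$ and winding numbers of $T_2$ supply the remaining coordinates. Conditions (1)--(3) of the theorem are exactly what is needed for these quantities to be single-valued, yielding a local diffeomorphism $\Pi$ from the cover into a representation variety $\cR$ fibered over the subvariety of $\mathrm{Hom}(\pi_1(\Sigma_{g,n}), \bR_{>0})$ cut out by the local holonomy conditions imposed by $\kappa$.

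Next I would construct an $\mathrm{Mod}_{g,n}[\xi]$-invariant Lebesgue class measure on $\cR$. The fibers of $\cR$ over the character variety are affine spaces modeled on twisted relative cohomology groups $H^1(\Sigma_{g,n}, C'; L_\chi)$ for an appropriate subset $C' \subset C$, each carrying a canonical linear Lebesgue measure from its affine structure. The goal is to combine these fiberwise measures with a measure on the base character variety into a single measure preserved by the framed mapping class group; equivalently, one seeks an invariant section of $\bigwedge^{\mathrm{top}} T^* \cR$ up to positive scale. This is the novel cohomological computation advertised in the abstract: one analyzes the action of $\mathrm{Mod}_{g,n}[\xi]$ on the top exterior power of the relevant twisted cohomology and verifies that it is unimodular, producing an invariant volume element. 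The passage to the framed mapping class group, and to the particular cover specified by the homology and winding-number conditions, is precisely what is needed to cut down to a subgroup whose action preserves the candidate volume form; for the full mapping class group, natural mapping classes rescale it nontrivially.

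Once the invariant measure on $\cR$ is in hand, the remaining steps are routine. Pulling back by $\Pi$ gives a well-defined Lebesgue class measure on the cover; full support away from the translation and homothety loci follows because $\Pi$ takes values in an open subset of $\cR$ that avoids these loci (where $\chi$ degenerates). Finally, $\mathrm{SL}(2,\bR)$-invariance is essentially automatic: the $\mathrm{SL}(2,\bR)$ action on twisted $1$-forms acts $\bR$-linearly on the real and imaginary parts of the $\bC$-valued periods while fixing the $\bR_{>0}$-valued character $\chi$, and this action manifestly preserves Lebesgue measure on each affine chart of $\cR$. The principal obstacle is the construction of the invariant volume form on $\cR$: in the translation case this is immediate from the symplectic structure on $H^1(\Sigma_g, C; \bC)$, but with twisted coefficients and a nontrivial character variety as base, one must carry out a genuinely new analysis of mapping class group cohomology with twisted coefficients and pinpoint the exact cover on which the construction descends.
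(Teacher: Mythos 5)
Your architecture matches the paper's: period coordinates into a bundle of relative twisted cocycles over the character variety, a framed-mapping-class-group-equivariant fiberwise volume form combined with Lebesgue measure on the base, pullback under the period map (which the paper quotes from earlier work of Apisa and Veech as a local diffeomorphism away from translation and homothety surfaces), and $\mathrm{SL}(2,\bR)$-invariance from the linearity of the action on periods together with the fact that the fiber measures are multiples of Lebesgue measure. The identification of the obstruction as cohomological and the observation that the full mapping class group rescales any candidate volume form are also correctly placed.

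The gap is that the one step carrying all of the content --- producing the equivariant volume form and showing that conditions (1)--(3) are exactly what make the obstruction vanish --- is asserted rather than carried out (``one analyzes the action\dots and verifies that it is unimodular''). This is not a routine verification; it occupies Section \ref{S:Proof-OneConePoint} and the entire Appendix of the paper. Concretely: the determinant line of the cocycle bundle is trivialized explicitly, and the failure of equivariance is recorded by a Radon--Nikodym cocycle $A(f,\chi)$. A Nullstellensatz argument (Lemma \ref{L:NullstellensatzArgument}, Corollary \ref{C:PrelimCocycleResult}) shows that $A(f,\cdot)$ is a monomial in the coordinates of the character, hence a \emph{multiplicative} cocycle, i.e.\ an element of $Z^1(\Mod;H_1)$; it then splits into an absolute part and a relative part $\prod_i \chi(f(c_i)-c_i)$, the latter killed by preserving the relative homology classes of the arcs $c_i$. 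Handling the absolute part requires the computation $H^1(\Mod_{g,n+1};H_1(\Sigma_{g,n};\bZ))\cong\bZ$, generated by a change-of-relative-winding-number cocycle (Theorem \ref{T:MainMCGCohomology}), together with the criterion of Corollary \ref{C:zerorestrict} for when this generator restricts to a coboundary on a subgroup --- and that criterion is precisely the list (1)--(3). Without this computation one cannot see why any cover suffices, nor why these particular homology and winding-number conditions are the right ones. A smaller correction: conditions (1)--(3) are not needed to make the period coordinates single-valued --- the period map is already defined on the pullback to Teichm\"uller space --- they are needed so that the fiberwise volume form is equivariant under the deck group, which is what lets the measure descend; your second paragraph has this right, but your first paragraph misattributes their role.
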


A \emph{relative framing} is an enhancement of a framing that permits the measurement of winding numbers of arcs as well as simple closed curves. See Section \ref{SS:relwnf}. A \emph{homothety surface} is one whose monodromy representation can be conjugated into the homothety subgroup $\{az\}_{a \in \bR_{>0}} \subseteq \Dil$. Note that invariant measures exist on $\Omega^{tw}\cM_{g,1}(2g-2)$ without passing to a cover and, when all cone points are translation-like, one must only pass to a cover that fixes a tree of arcs between the cone points. More generally, we have the following.

\begin{cor}\label{C:main}
If elements of $\Omega^{tw} \cM_{g,n}(\kappa)$ have cone points consisting of one integral zero and integral residueless poles, then $\Omega^{tw} \cM_{g,n}(\kappa)$ admits a fully supported $\mathrm{SL}(2, \bR)$-invariant Lebesgue class measure on the complement of the locus of translation and homothety surfaces.

If all cone points are integral and poles are residueless, then the same holds after passing to a cover given by demanding that all arcs in a tree of arcs connecting the zeros are preserved in relative homology. 
\end{cor}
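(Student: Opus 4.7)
The plan is to derive Corollary \ref{C:main} directly from Theorem \ref{T:main:general} by checking that, under the hypotheses of the corollary, the three cover-defining conditions in the theorem collapse to either nothing or only condition (1). The only delicate points are the identification of the relevant restricted stratum and of the set $S$ of non-integral cone points and integral poles with nonzero residue appearing in conditions (2) and (3).

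First, I would observe that in both parts of the corollary the restricted stratum equals the full stratum $\Omega^{tw}\cM_{g,n}(\kappa)$. The restriction records, for each integral pole, whether it is residueless or has nonzero residue; since every integral pole is assumed residueless in both statements, there is only one choice, so the restricted stratum coincides with $\Omega^{tw}\cM_{g,n}(\kappa)$ and Theorem \ref{T:main:general} applies to the full stratum.

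Next I would identify $S$. In both parts of the corollary every cone point is integral, so $S$ contains no non-integral cone points; and every integral pole is residueless, so $S$ contains no integral poles of nonzero residue. Hence $S=\emptyset$. Consequently the arcs referenced in conditions (2) and (3) form an empty collection, and these conditions impose no constraint on the cover.

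It remains to dispatch the two cases. In part 1 there is exactly one integral zero, so any tree of arcs joining the integral zeros consists of a single vertex with no edges, and condition (1) is also vacuous; the cover is therefore trivial and Theorem \ref{T:main:general} directly produces a fully supported $\mathrm{SL}(2,\bR)$-invariant Lebesgue class measure on $\Omega^{tw}\cM_{g,n}(\kappa)$ off the translation and homothety locus. In part 2 there may be several integral zeros, so condition (1) remains and produces the cover corresponding to the subgroup of the framed mapping class group preserving the relative homology classes of a chosen tree of arcs joining those zeros; applying Theorem \ref{T:main:general} to this cover yields the advertised measure. I do not expect any substantive obstacle: the corollary is an unpacking of the theorem, and the only potential pitfall is a misidentification of $S$ or the restricted stratum, which the checks above settle.
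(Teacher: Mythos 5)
Your proposal is correct and matches the paper's (implicit) treatment: the corollary is stated as a direct specialization of Theorem \ref{T:main:general}, obtained exactly as you describe by noting that $S=\emptyset$ under the hypotheses, so conditions (2) and (3) are vacuous, and condition (1) is vacuous precisely when there is a single integral zero. Your observation that the stratum is already the framed mapping class group cover (so a ``cover'' corresponding to the full framed mapping class group is trivial) is the one point worth making explicit, and you make it.
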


A natural question is whether $\cD(\kappa)$ also admits an invariant fully supported Lebesgue class measure. It is known to admit such a measure on the complement of the \emph{triangulable locus}, which is the locus of dilation surfaces that can be triangulated by saddle connections. We show the following.

\begin{thm}\label{T:area}
After passing to a cover as in Theorem \ref{T:main:general}, the triangulable locus of $\cD(\kappa)$ admits an $\mathrm{SL}(2, \bR)$ invariant Lebesgue class measure if and only if the triangulable locus in $\Omega^{tw} \cM_{g,n}(\kappa)$ admits an ``area function", i.e. a measurable $\mathrm{SL}(2, \bR)$-invariant map to $\bR_{>0}$ so that $\mathrm{diag}(\lambda, \lambda)$ changes ``area" by $\lambda^2$ for any $\lambda \in \bR^\times$.
\end{thm}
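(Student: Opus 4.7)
The plan is to observe that, given the $\mathrm{SL}(2, \bR)$-invariant Lebesgue class measure $\mu$ on the cover of $\Omega^{tw}\cM_{g,n}(\kappa)$ supplied by Theorem~\ref{T:main:general}, the data of an area function on the triangulable locus $X$ and of an $\mathrm{SL}(2, \bR)$-invariant Lebesgue class measure on the corresponding triangulable locus $Y$ in the cover of $\cD(\kappa)$ are bijective (up to a positive scalar) via a single Radon-Nikodym identity. Let $d$ denote the real dimension of the stratum. The projection $\pi\colon X \to Y$ is a principal $\bR_{>0}$-bundle, where $\bR_{>0}$ acts by scalar multiplication on the twisted $1$-form; this $\bR_{>0}$-action is the image of the diagonal subgroup $\{\mathrm{diag}(\lambda, \lambda)\}$ of $\mathrm{GL}^+(2, \bR)$. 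The key preliminary, derivable from the construction of $\mu$ as Lebesgue measure in period coordinates on which $\mathrm{diag}(\lambda, \lambda)$ acts by scalar multiplication by $\lambda$, is the scaling identity $(\mathrm{diag}(\lambda, \lambda))_*\mu = \lambda^{-d}\mu$.

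For the direction area function $\Rightarrow$ measure, given $A$, I would form $\tilde\nu := A^{-d/2}\mu$ on $X$. A direct calculation combining $A(\lambda\omega) = \lambda^2 A(\omega)$ with the scaling identity yields $(\mathrm{diag}(\lambda, \lambda))_*\tilde\nu = \tilde\nu$, so $\tilde\nu$ is $\mathrm{GL}^+(2, \bR)$-invariant and Lebesgue class, hence descends to an $\mathrm{SL}(2, \bR)$-invariant Lebesgue class measure on $Y$. For the reverse direction, given an invariant measure $\nu$ on $Y$, I would form the canonical lift $\tilde\nu$ of $\nu$ to $X$ using fiberwise Haar measure $ds/s$, which is well-defined on any principal $\bR_{>0}$-bundle up to overall normalization; this $\tilde\nu$ is automatically $\mathrm{GL}^+(2, \bR)$-invariant and Lebesgue class. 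The Radon-Nikodym derivative $f := d\mu/d\tilde\nu$ is then a positive measurable $\mathrm{SL}(2, \bR)$-invariant function, and the scaling identity forces $f(\lambda\omega) = \lambda^d f(\omega)$, so $A := f^{2/d}$ is an area function. In both directions the correspondence is encoded by the single identity $A^{d/2} = d\mu/d\tilde\nu$.

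The main obstacle is the scaling identity $(\mathrm{diag}(\lambda, \lambda))_*\mu = \lambda^{-d}\mu$ itself, which must be traced back through the construction in Theorem~\ref{T:main:general}: there, $\mu$ is pushed forward from a framed mapping class group invariant Lebesgue class measure on a representation variety via the period map, and one needs the target of the period map to be naturally a real vector space on which $\mathrm{diag}(\lambda, \lambda)$ acts by scalar multiplication by $\lambda$. The remaining technical points --- that the triangulable locus is open and $\mathrm{GL}^+(2, \bR)$-invariant so that $\mu$ restricts cleanly, and that all Radon-Nikodym derivatives in play are positive almost everywhere, which follows from the Lebesgue class absolute continuity of both measures --- are routine.
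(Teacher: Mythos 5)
Your argument is essentially the paper's, recast in slightly different language. The paper trivializes $\wt{\cD_s}\to\wt{\cD}$ (after a further double cover), writes the invariant measure as $\mu\otimes r^n\,dr$, records the $\mathrm{SL}(2,\bR)$-action on the total space as $(x,c)\mapsto(g\cdot x, A(g,x)c)$ for an explicit $\bR_{>0}$-valued cocycle $A(g,x)=|g\cdot X|/|X|$ built from a fiberwise norm, and observes that an invariant measure downstairs exists if and only if $A$ is a measurable coboundary, a coboundary trivialization $F$ corresponding to the area function $|X|^2/F(X)^2$. Your Radon--Nikodym derivative $f=d\mu/d\tilde\nu$ against the Haar lift $\tilde\nu=\nu\otimes ds/s$ is exactly a coboundary trivialization of (a power of) this cocycle, so the two proofs carry the same content; your packaging avoids introducing the norm and the cocycle explicitly, at the cost of having to justify that the Haar lift is canonical and automatically $\mathrm{GL}^+(2,\bR)$-invariant. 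That justification is available: in any measurable trivialization the action is $(y,s)\mapsto(gy,A(g,y)s)$, and $ds/s$ is invariant under every multiplication map $s\mapsto A(g,y)s$, so $\nu\otimes ds/s$ is invariant for any cocycle and independent of the trivialization.

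The one incorrect ingredient is the scaling identity $(\mathrm{diag}(\lambda,\lambda))_*\mu=\lambda^{-d}\mu$ with $d$ the dimension of the stratum, and you correctly flagged this as the point needing care: the period map does not take values in a single vector space on which $\mathrm{diag}(\lambda,\lambda)$ acts by the scalar $\lambda$. Its target $\cV^{rel}(\kappa)$ is a vector bundle over a slice of the character variety $H^1(\Sigma_g\setminus B;\bR_{>0})$, the measure is a product of fiberwise Lebesgue measures with a measure on the base, and $\mathrm{diag}(\lambda,\lambda)$ acts by the scalar $\lambda$ on the fibers and trivially on the base (the holonomy character is unchanged by rescaling the form). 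So the correct exponent is the real rank $k$ of that bundle, not $d$; the identity as written fails whenever the holonomy directions are present. This does not damage the structure of your argument --- all that is used is that a single exponent $k>0$ satisfies $(\mathrm{diag}(\lambda,\lambda))_*\mu=\lambda^{-k}\mu$, after which one takes $\tilde\nu=A^{-k/2}\mu$ and $A=f^{2/k}$ --- and the same homogeneity is what underlies the paper's disintegration $\nu=\mu\otimes r^n\,dr$ with a single integer $n$. With that correction, both directions of your correspondence go through as you describe.
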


We will now describe the strategy for proving Theorem \ref{T:main:general} in the simplifying case of $\Omega^{tw} \cM_{g,1}(2g-2)$. Let $\pi_1 := \pi_1(\Sigma_g, p)$ where $p$ is the unique singularity. 

Recall that Masur-Veech measure on $\Omega \cM_g(2g-2)$ is constructed by passing to the universal cover of $\Omega \cM_g(2g-2)$, where the period map is a local diffeomorphism to $\mathrm{Hom}(\pi_1, \bC) \cong \bC^{2g}$, and pulling back Lebesgue measure, which is invariant under the action of the mapping class group. This measure then descends to $\Omega \cM_g(2g-2)$. 

Analogously, by Apisa \cite{Apisa-Period}, which extends work of Veech \cite{veech93}, the universal cover of $\Omega^{tw} \cM_{g,1}(2g-2)$ admits a framed mapping class group and $\mathrm{SL}(2, \bR)$-equivariant map that is a local diffeomorphism away from the locus of translation and homothety surfaces. This is the \emph{period map}, to $\mathrm{Hom}(\pi_1, \Dil)$. In analogy with the construction of Masur-Veech measure we would like to pull back a framed mapping class group invariant measure. We will characterize when such a measure exists.

Recall that the \emph{Johnson kernel} $\cK_{g,1} \le \Mod_{g,1}$ is the subgroup generated by Dehn twists around separating simple closed curves. Say that $\Gamma \leq \mathrm{Mod}_{g,1}$ is \emph{sufficiently large} if it contains the Johnson kernel and projects to a finite index subgroup of $\mathrm{Sp}(2g, \bZ)$. By Calderon-Salter \cite{CalderonSalter-RelHom}, framed mapping class groups are sufficiently large.

\begin{thm}\label{T:Main:InvariantForms}
If $g \geq 3$ and $\Gamma \leq \mathrm{Mod}_{g,1}$ is sufficiently large, then $\mathrm{Hom}(\pi_1, \Dil)$ admits a $\Gamma$-invariant Lebesgue class measure if and only if $\Gamma$ is contained in an $N$-framed mapping class group for some $N$.
\end{thm}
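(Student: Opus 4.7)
The plan is to reformulate the existence of a $\Gamma$-invariant Lebesgue class measure on $\mathrm{Hom}(\pi_1, \Dil)$ as the vanishing of an explicit Jacobian cohomology class, and then to identify this class topologically as the obstruction to $\Gamma$ preserving an $N$-framing. The sufficient-largeness hypothesis on $\Gamma$ enters at the end, through the Calderon--Salter framework, to translate between analytic and topological characterizations.

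For the ``if'' direction, I would use the semidirect decomposition $\Dil = \bC \rtimes \bR_{>0}$ to coordinatize representations as pairs $(\chi, b)$, with $\chi \in \mathrm{Hom}(\pi_1, \bR_{>0})$ the holonomy character and $b$ a $\chi$-twisted $\bC$-valued cocycle. An $N$-framing $\xi$ assigns to each free homotopy class $\alpha$ a winding number $w_\xi(\alpha) \in \bZ/N$. On each fiber of $\mathrm{Hom}(\pi_1, \Dil) \to \mathrm{Hom}(\pi_1, \bR_{>0})$, take Lebesgue measure in coordinates $b|_T$ for a tree $T$ of arcs, rescaled by the $\chi$-dependent weight $\prod_{\alpha \in T} \chi(\alpha)^{w_\xi(\alpha)/N}$, where the $N$ in the denominator absorbs the $\bZ/N$-ambiguity of $w_\xi$. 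Combined with Lebesgue measure on the base (which is $\mathrm{Sp}(2g,\bZ)$-invariant), the result is a Lebesgue class measure that is manifestly preserved by every mapping class in $\Mod_{g,1}[\xi]$.

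For the ``only if'' direction, I would fix a reference measure $\mu_0$ obtained by restricting a product of Haar measures on $\Dil^{2g}$ to the surface-relation subvariety, and then compute the Jacobian cocycle $J_\gamma = \gamma^* \mu_0 / \mu_0$ explicitly. Using the short exact sequence of $\pi_1$-modules
\[
0 \to \bC_\chi \to \mathrm{Lie}(\Dil)_\rho \to \bR \to 0,
\]
together with the fact that the adjoint $\bR_{>0}$-action on the translation part of $\mathrm{Lie}(\Dil)$ rescales the standard Haar measure by an explicit power of $\chi$, the computation should yield a formula of the shape
\[
\log |J_\gamma| = \sum_{i=1}^{2g} n_i(\gamma) \log \chi(\alpha_i),
\]
with integer coefficients $n_i(\gamma) \in \bZ$ depending only on the combinatorics of how $\gamma$ rearranges a chosen set of generators. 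Since the functions $\log \chi(\alpha_i)$ are linearly independent coordinates on the base $\mathrm{Hom}(\pi_1, \bR_{>0}) \cong \bR^{2g}$, the measurable cocycle $[\log J]$ vanishes if and only if the $\bZ^{2g}$-valued cocycle $\gamma \mapsto (n_i(\gamma))$ is a coboundary over $\bZ$.

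The key, and I expect most delicate, step is to identify this $\bZ^{2g}$-valued cocycle with a framing-change cocycle: after a universal change of basis, $n_i(\gamma) \equiv w_{\gamma \cdot \xi_0}(\alpha_i) - w_{\xi_0}(\alpha_i) \pmod{N}$ for a reference framing $\xi_0$ inherent to the choice of $\mu_0$. This identification is the novel cohomological computation alluded to in the abstract, and it is the main obstacle. Once established, vanishing of $(n_i)$ on $\Gamma$ is equivalent to $\Gamma$ preserving a framing class in $H^1(UT\Sigma_{g,1}; \bZ)$ modulo some $N$, i.e.\ $\Gamma \subseteq \Mod_{g,1}[\xi]$ for an $N$-framing $\xi$. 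The sufficient-largeness hypothesis on $\Gamma$ combined with Calderon--Salter ensures that the measurable coboundary equation for $\log J$ does not admit pathological solutions outside those coming from preserved framings, thereby locking in the if-and-only-if characterization.
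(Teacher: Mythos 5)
Your overall architecture --- fiber $\mathrm{Hom}(\pi_1,\Dil)$ over the character variety, extract a Jacobian cocycle valued in characters, and identify it with a change-of-winding-number cocycle --- matches the paper's, but the ``only if'' direction has a genuine gap at its first step. You pass from ``there exists a $\Gamma$-invariant Lebesgue class measure'' directly to ``the Jacobian cocycle of the reference measure $\mu_0$, which depends only on $\chi$, must be a coboundary via a function of $\chi$.'' An arbitrary invariant measure in the Lebesgue class has a Radon--Nikodym derivative that is a measurable function of the \emph{whole} point of $\mathrm{Hom}(\pi_1,\Dil)$, not just of the base character, so the coboundary equation you write down is a priori far weaker than what you need. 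The paper closes exactly this gap by disintegrating over the base and invoking Ghazouani's theorem that the Johnson kernel acts on each fiber $H^1(\Sigma_{g,1},\bR_\chi)$ through a dense subgroup of $\mathrm{SL}$, together with the fact that for $g>2$ the diagonal $\mathrm{SL}(V)$-action on $V\oplus V$ is transitive off the collinear pairs; this forces the conditional measures to be fiberwise Lebesgue almost surely. This is precisely where the ``sufficiently large'' hypothesis (containment of $\cK_{g,1}$) is consumed. Your appeal to Calderon--Salter to ``rule out pathological solutions'' points at the wrong tool: Calderon--Salter concerns the structure of framed mapping class groups, not the dynamical rigidity of the fiber action.

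Two further steps you elide are substantive in the paper. First, even once the coboundary primitive is known to depend only on $\chi$, concluding that it has the form $\chi\mapsto\chi(v)$ for a fixed class $v\in H^1(\Sigma_{g,1};\bR)$ --- and then that $v$ is \emph{rational}, so that an integer multiple of $W_\xi+v$ is an honest $N$-framing --- requires the inflation-restriction sequence plus Borel vanishing of $H^1(\Lambda,H_1(\Sigma_g;\bR))$ for $\Lambda\le\mathrm{Sp}(2g,\bZ)$ of finite index (this is where $g\ge 3$ and the finite-index-image hypothesis enter), and a separate argument producing $\gamma\in\Lambda$ with no eigenvalue $1$. Your claim that the cocycle ``vanishes if and only if $(n_i(\gamma))$ is a coboundary over $\bZ$'' skips both. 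Second, the integrality of the exponents $n_i(\gamma)$, which you treat as evident from ``combinatorics,'' is itself a theorem: a priori the Jacobian is only a rational function of $\chi$, and the paper needs the Nullstellensatz (plus the Weierstrass preparation theorem and triviality of $H_1(\Mod_{g,1})$) to show it is a monomial, before $H^1(\Mod_{g,1};H)\cong\bZ$ and an evaluation on point-pushes pin it down as the winding-number cocycle. Your ``if'' direction is morally the paper's coboundary trick with the $1/N$ rescaling, but it too silently relies on the unproved identification of the Jacobian with a change-of-winding-number cocycle, so it cannot be called complete as written.
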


The framed mapping class group appears as (1) the stabilizer of an isotopy class of a vector field on a surface, (2) the fundamental group of strata of dilation surfaces by \cite{ABW1}, and (3) the image of the fundamental group of strata of translation surfaces in the mapping class group by Calderon-Salter \cite{CalderonSalter}. Theorem \ref{T:Main:InvariantForms} provides a new characterization of this subgroup and the closely related $N$-framed mapping class groups. It should be contrasted with work of Goldman \cite{goldman-symplectic}, which shows that character varieties defined by representations of surface groups into reductive Lie groups admit mapping class group invariant symplectic forms. 

Just as strata of holomorphic $k$-differentials inherit a volume form from Masur-Veech volume by passing to the holonomy cover, strata of twisted holomorphic $k$-differentials also inherit a volume form from strata of twisted holomorphic $1$-forms. The arguments in Apisa-Bainbridge-Wang \cite[Section 4]{ABW1} can be modified to show that strata of twisted holomorphic $k$-differentials are $K(\pi, 1)$ spaces, where $\pi$ is an $N$-framed mapping class group. The appearance of $N$-framed mapping class groups in Theorem \ref{T:Main:InvariantForms} is therefore expected. 

As we will see in Section \ref{S:Proof-OneConePoint}, the existence of an invariant measure on $\mathrm{Hom}(\pi_1, \Dil)$ is related to the existence of one on $\mathrm{Hom}(\pi_1, \Aff^+(\bR))$ where $\Aff^+(\bR)$ is the group of transformations $f(z) = az +b$ with $a,b$ real and $a > 0$. We will see that there is a map $\mathrm{Hom}(\pi, \Aff^+(\bR)) \ra H^1(\Sigma_g, \bR_{>0})$ whose fiber over a character $\chi$ can be identified with the space of twisted cocycles $Z^1(\Sigma_g, \bR_{\chi})$. The dimension of the fiber jumps over the trivial character. So let $\mathrm{Hom}^\circ(\pi_1, \Aff^+(\bR))$ be the result of deleting that fiber. This is a rank $2g-1$ vector bundle. The subbundle $B$ whose fiber over a character is the one-dimensional space of coboundaries is invariant under $\mathrm{Mod}_{g,1}$. Theorem \ref{T:Main:InvariantForms} will follow from the following. 

\begin{thm}\label{T:Main:InvariantSection}
Fix $g \geq 3$. Let $\Gamma \leq \mathrm{Mod}_{g,1}$ project to a finite index subgroup of $\mathrm{Sp}(2g, \bZ)$. Then $\bigwedge^{2g-1}\left( \mathrm{Hom}^\circ(\pi_1, \mathrm{Aff}^+(\bR)) \right)^*$ admits a $\Gamma$-equivariant nonvanishing section if and only if $\Gamma$ is contained in an $N$-framed mapping class group.  

This equivariant family of volume forms on fibers may be combined with the volume coming from the symplectic form on the base to form a $\Gamma$-invariant volume form on $\mathrm{Hom}^\circ(\pi_1, \mathrm{Aff}^+(\bR))$. It is ergodic when $\Gamma$ is sufficiently large. 
\end{thm}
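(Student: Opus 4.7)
The first task is to set up the geometry. The map $\rho \mapsto \log(\text{linear part})$ exhibits $\mathrm{Hom}^\circ(\pi_1, \Aff^+(\bR))$ as a rank-$(2g-1)$ vector bundle $\cE$ over $\cM := H^1(\Sigma_g, \bR) \setminus \{0\}$, whose fiber over $\chi$ is the twisted cocycle space $Z^1(\pi_1, \bR_\chi)$; the dimension follows from $H^0 = H^2 = 0$ and $\dim H^1(\Sigma_g, \bR_\chi) = 2g-2$ for $\chi \neq 1$. The tautological exact sequences
\[
0 \to B^1 \to Z^1 \to H^1 \to 0
\]
over $\cM$ decompose
\[
\textstyle\bigwedge^{2g-1} \cE^* \;\cong\; (B^1)^* \otimes \bigwedge^{2g-2} \cH^*,
\]
with $B^1$ and $\cH$ the bundles of coboundaries and first cohomology. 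The coboundary line $B^1$ has a canonical $\mathrm{Mod}_{g,1}$-invariant nonvanishing section given over $\chi$ by the cocycle $\gamma \mapsto \chi(\gamma)-1$, so the problem reduces to producing a $\Gamma$-equivariant nonvanishing section of the line bundle $\bigwedge^{2g-2}\cH^*$ over $\cM$.

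The natural candidate section is the refined Reidemeister--Turaev torsion with twisted coefficients. The torsion of $(\Sigma_g, \bR_\chi)$ lives in $\bigotimes_i \bigwedge^{\mathrm{top}} H^i(\Sigma_g, \bR_\chi)^{(-1)^i}$, which for $\chi \neq 1$ (where $H^0 = H^2 = 0$) reduces to an element of $\bigwedge^{2g-2}\cH^*$. Turaev's refinement resolves the sign ambiguity of torsion upon specifying an Euler structure on $\Sigma_{g,1}$, and for a surface with one marked point such Euler structures are in bijection with homotopy classes of relative framings. A mapping class $\phi$ transports the torsion section by (i) the geometric pullback $\chi \mapsto \chi \circ \phi^{-1}$ on the base and (ii) a multiplicative scalar recording the action of $\phi$ on Euler structures. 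The central step is to identify (ii) with the $N$-framing obstruction of $\phi$.

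Given that identification, the forward direction is automatic: if $\Gamma \leq \mathrm{Mod}_{g,1}[\xi]$ for an $N$-framing $\xi$ whose Euler structure was used to rigidify the torsion, the torsion section is itself $\Gamma$-equivariant. For the converse, any $\Gamma$-equivariant nonvanishing section $\sigma$ yields a nonvanishing $\Gamma$-invariant function $\sigma/\tau$ on $\cM$. Since $g \geq 3$, $\cM$ is simply connected, and since $\Gamma$ surjects onto a finite-index subgroup of $\mathrm{Sp}(2g,\bZ)$ acting ergodically on $H^1(\Sigma_g,\bR)$, this invariant function must be constant along orbits; a rigidity argument for arithmetic group representations (where $g \geq 3$ enters again, via higher-rank superrigidity/Margulis-type inputs for $\mathrm{Sp}(2g,\bZ)$) then forces the scalar action (ii) of $\Gamma$ to be trivial, i.e.\ $\Gamma$ preserves an $N$-framing.

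Assembly is then immediate: combine the fiberwise volume with the standard symplectic volume on the base $H^1(\Sigma_g, \bR)$ to produce a $\Gamma$-invariant volume on $\mathrm{Hom}^\circ$. Ergodicity under sufficiently large $\Gamma$ follows by combining ergodicity of the $\mathrm{Sp}(2g, \bZ)$-action on $H^1(\Sigma_g,\bR)$ with a Mautner-type argument: the Johnson kernel acts on each fiber $Z^1(\pi_1, \bR_\chi)$ through a rich group of unipotents coming from the Magnus/Chillingworth representation, which promotes base ergodicity to ergodicity of the total action. \textbf{Main obstacle}: step (ii) above---explicitly identifying the scalar by which $\mathrm{Mod}_{g,1}$ acts on the refined torsion section with the $N$-framing invariant. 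This is precisely the novel cohomological computation flagged in the introduction, and it is where $g \geq 3$ is used decisively, both to trivialize topological obstructions on $\cM$ and to invoke stability/rigidity for the symplectic representation of the mapping class group.
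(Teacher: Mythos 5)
Your outline reproduces the correct architecture of the argument — realize $\mathrm{Hom}^\circ(\pi_1,\Aff^+(\bR))$ as a rank-$(2g-1)$ bundle of twisted cocycle spaces over the nontrivial characters, split off the canonically trivialized coboundary line, and measure the failure of equivariance of a candidate determinant section by a cocycle valued (via duality) in $H_1(\Sigma_{g,1})$ that must be identified with a change-of-winding-number class. But the step you flag as the ``main obstacle'' is not a technical loose end: it \emph{is} the theorem. The paper's route is to construct the nonvanishing section $\omega_\chi$ explicitly from the relation $0=\sum_i (1-\chi(b_i))\alpha_i-(1-\chi(a_i))\beta_i$, prove that the transformation cocycle $A(f,\chi)$ is a rational function of the character, use the Nullstellensatz (over $\bC$) to force it to be a monomial $\prod x_i^{n_{i,f}}y_i^{m_{i,f}}$ — hence a multiplicative cocycle in $Z^1(\Mod_{g,1};H_1(\Sigma_g;\bR))$ — and only then invoke the Morita--Furuta--Trapp computation $H^1(\Mod_{g,1};H)\cong\bZ$ (generalized in the Appendix) to identify $A(f,\chi)=\chi\bigl(PD((f^*-1)W_\xi)\bigr)$, with an explicit point-push calculation pinning down the fiber value $2g-1$. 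Your proposed substitute — refined Reidemeister--Turaev torsion rigidified by an Euler structure — is an attractive alternative candidate section, but you supply no argument that its transformation scalar is the winding-number cocycle, nor even that it is a multiplicative cocycle (the analogue of the Nullstellensatz step); without that, neither implication of the theorem is established.

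The converse and the ergodicity statement have further gaps. As written, a $\Gamma$-equivariant section $\sigma$ does not give a $\Gamma$-\emph{invariant} function $\sigma/\tau$; it gives a function $F$ with $F(f_*\chi)=A(f,\chi)F(\chi)$, i.e.\ exhibits $A$ as a coboundary in the dynamical sense. To convert this into membership in an $N$-framed mapping class group one needs: (i) the observation that the Torelli group fixes the base pointwise and scales fibers by $A(f,\chi)$, so equivariance forces $A\equiv 1$ on $\Gamma\cap\cI_{g,1}$; (ii) Borel's vanishing theorem $H^1(\Lambda,H_1(\Sigma_g;\bR))=0$ for finite-index $\Lambda\le\mathrm{Sp}(2g,\bZ)$, $g\ge 3$, fed into inflation--restriction (this is the only place ``Margulis-type'' rigidity enters — your appeal to superrigidity and to simple connectivity of $\cM$ is not what is needed); and (iii) a rationality argument showing the invariant class $W_\xi+v$ has $v\in H^1(\Sigma_{g,1};\bQ)$, so that an integer multiple is an honest $N$-framing. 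None of these appear in your sketch. Finally, ergodicity does not follow from a Mautner-type argument with unspecified unipotents: the paper uses Ghazouani's theorem that the Johnson kernel has \emph{dense} image in $\mathrm{SL}(H^1(\Sigma_{g,1},\bR_\chi))$ for generic $\chi$, combined with disintegration of the measure over the base; that density statement is a substantial input you would need to cite or prove.
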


The ergodicity statement relies on a similar statement in Ghazouani \cite{Ghazouani-MCGDynamicsAndAff}. Theorem \ref{T:Main:InvariantSection} provides another new characterization of framed mapping class groups and roughly says that, up to scaling, there is a unique choice of framed mapping class group invariant Lebesgue class measure on $\mathrm{Hom}(\pi_1, \Dil)$.

\subsection{Cohomology of the mapping class group}
Our method of proving Theorem \ref{T:main:general} is to formulate the obstruction to the existence of our desired measures in cohomological terms, and then show that these obstructions vanish on the covers we consider. The domain for our obstructions is the first cohomology of the mapping class group with certain twisted coefficient systems. 
The prototype of our results is the computation $H^1(\Mod_{g,1};H) \cong \bZ$ carried out by Morita in \cite{Morita-FamiliesOfJacobians}; here $H := H_1(\Sigma_g;\bZ)$. 

Since this group is nontrivial, vanishing of obstructions does not come for free, and one must moreover understand particular cocycles that represent generators. In the case of $H^1(\Mod_{g,1};H)$, this was considered by Trapp \cite{trapp}, and independently by Furuta (as recorded by Morita \cite[p. 569]{morita-secondary}). Both authors showed that $H^1(\Mod_{g,1};H)$ is generated by a ``change of winding number'' cocycle, implying that the pullback map $H^1(\Mod_{g,1}; H) \to H^1(\Mod_{g,1}[\xi]; H)$ is trivial, where $\Mod_{g,1}[\xi]$ is any framed mapping class group.

In the Appendix, we establish a certain generalization of these results which underpins the main results of our paper. The following is a paraphrase of what we show.

\noindent \textbf{Theorem \ref{T:MainMCGCohomology}}. (Paraphrase) {\em There is an isomorphism
\[
H^1(\Mod_{g,n+1};H_1(\Sigma_{g,n};\bZ)) \cong \bZ.
\]
and this group is generated by a ``change of relative winding number'' cocycle.
}

\subsection{Context} The moduli space of affine and projective structures with no cone points was first considered in Hubbard \cite{Hubbard-ProjectiveStructures} and Hejhal \cite{Hejhal}. Introducing cone points dates to the work of Gunning \cite{gunning78} and, via the correspondence between branched affine surfaces and regular connections developed in Novikov-Shapiro-Tahar  \cite{NovikovShapiroTahar} and Apisa-Bainbridge-Wright \cite{ABW2}, to Deligne \cite{deligne_equations_differentielles}. Veech \cite{veech93} placed coordinates on the moduli space of affine surfaces. Recently, strata of meromorphic $1$-forms have been intensely studied, see \cite{Boissy-components, Lee-components, ChenFaraco-Periods, moller_mullane_2023_teichmueller}. In the setting of twisted forms, the analogue involves letting $\kappa$ contain negative integers.

Fundamental work has appeared recently determining when dilation surfaces are triangulable by saddle connections (Duryev-Fougeron-Ghazouani \cite{DFG-Veech}), showing that they always possess cylinders (Boulanger-Ghazounai-Tahar \cite{AGT}), and assessing when a dense set of directions on a dilation surface are Morse-Smale (Tahar \cite{tahar}). In \cite{ABW1}, it is shown that every vector field on a Riemann surface that vanishes at finitely many points can be ``pulled tight to a dilation surface". This helps to explain the connection of dilation surfaces to recent work on flows on surfaces, see Ghazouani-Ulcigrai \cite{GhazouaniUlcigrai-Genus2}. See Abate-Tovena \cite{Abate-Tovena-VectorField} for another connection between affine surfaces and flows.




\noindent \textbf{Acknowledgements.} The first author was partially supported by NSF grant no. DMS-2304840. The second author was partially supported by NSF grant no. DMS-2338485.

\setcounter{tocdepth}{1} 
\tableofcontents

\section{Framed surfaces}\label{S:MCG}

As developed in the following sections, the problems under study in this paper are closely connected to the structure of certain ``framed mapping class groups'' - subgroups of the mapping class group of a surface that preserve the isotopy class of a framing. As detailed in Section \ref{S:DilationSurfaces}, dilation surfaces naturally carry a preferred framing, and their moduli is closely related to the framed mapping class group.

Here we recollect the necessary facts about framings on surfaces and the associated subgroups of mapping class groups that we will use in the sequel. For a more comprehensive treatment of framings and framed mapping class groups, see \cite{CalderonSalter}, and for more on framings in the context of dilation surfaces, see \cite{ABW1}.

\subsection{Perspectives on framings}\label{subsection:wnf} 
In this paper, we will need to understand framings on surfaces from several points of view.

\para{Perspective 1: topology} A {\em framing} $\xi$ of a surface $\Sigma_{g,n}$ is a trivialization of the tangent bundle. This can be viewed as a pair $\xi_1, \xi_2$ of vector fields that are everywhere linearly independent, or more abstractly as a section of the principal bundle associated to the tangent bundle. Throughout, we will work with framings up to {\em isotopy}, i.e. up to homotopy of sections of the associated principal bundle. Accordingly we will be somewhat lax in our terminology, speaking of ``framings'' when what is meant precisely is ``isotopy class of framing''.  Since $\Sigma_{g,n}$ is oriented, a framing can be specified up to isotopy by a choice of a single non-vanishing vector field $\xi_1$ (extending this to a framing by choosing a Riemannian metric and taking $\xi_2$ to be orthogonal and positively-oriented).

\para{Perspective 2: winding number functions} A framing of $\Sigma_{g,n}$ gives rise to a {\em winding number function}. Let $\cS(\Sigma_{g,n})$ denote the set of isotopy classes of oriented simple closed curves on $\Sigma_{g,n}$. A winding number function is a map $W: \cS(\Sigma_{g,n}) \to \bZ$ satisfying the ``twist-linearity'' and ``homological coherence'' properties; see the paragraph ``Properties of winding number functions'' below. A non-vanishing vector field $\xi$ gives rise to a winding number function $W_\xi$ by the following procedure: given an oriented simple closed curve $c$, measure the total winding number of the forward-pointing tangent vector of $c$ relative to $\xi$. This is well-defined on the level of isotopy and in fact {\em encodes} the isotopy class of the framing; again see \cite[Section 2]{CalderonSalter}.

\para{Perspective 3: cohomology} Winding number functions are not cohomology classes {\em on $\Sigma_{g,n}$}, in the sense that $W: \cS(\Sigma_{g,n}) \to \bZ$ does not descend to a homomorphism $\overline W: H_1(\Sigma_{g,n}) \to \bZ$. On the other hand, winding number functions can be interpreted as cohomology classes on the unit tangent bundle $UT\Sigma_{g,n}$, as we explain. Let $\xi$ be a non-vanishing vector field on $\Sigma_{g,n}$. Normalizing length with respect to some Riemannian metric, $\xi$ can be viewed as a section map $\xi: \Sigma_{g,n} \to UT\Sigma_{g,n}$. The Poincar\'e dual to the image of $\xi$ determines an element\footnote{Since $UT\Sigma_{g,n}$ is noncompact, some care needs to be taken to make this precise, but we will elide this inessential technicality here.} of $H^1(UT\Sigma_{g,n})$ which we write (temporarily) as $PD(\xi)$. Such $PD(\xi)$ determines a winding number function via the {\em Johnson lift}: given an oriented simple closed curve $c \subset \Sigma_{g,n}$, let $\hat c \subset UT\Sigma_{g,n}$ denote the curve sitting above $c \subset \Sigma_{g,n}$ endowed with its forward-pointing unit tangent vector. Then the winding number of $c$ about $\xi$ can be computed homologically:
\[
W_\xi(c) = PD(\xi)(\hat c).
\]
Accordingly, we will conflate perspectives 2 and 3, thinking of a winding number function as an element of $H^1(UT\Sigma_{g,n};\bZ)$ and conversely. To unify notation, we will write $W_\xi$ to refer to both the winding number function and the associated cohomology class. 

It is important to understand the set of winding number functions as a subset of $H^1(UT\Sigma_{g,n};\bZ)$. This has the structure of an affine space.

\begin{prop}\label{prop:framingchar}
    An element $\phi \in H^1(UT\Sigma_{g,n}; \bZ)$ arises as $W_\xi$ for some framing $\xi$ if and only if $\phi(\zeta) = 1$, where $\zeta \in H_1(UT\Sigma_{g,n}; \bZ)$ is the loop around the $S^1$-fiber of $UT\Sigma_{g,n} \to \Sigma_{g,n}$, endowed with the positive orientation. In particular, if $W_\xi, W_\eta$ are two such classes, then 
    \[
    W_\xi = W_\eta + p^*v
    \]
    for some $v \in H^1(\Sigma_{g,n};\bZ)$, where $p: UT\Sigma_{g,n} \to \Sigma_{g,n}$ is the projection.
\end{prop}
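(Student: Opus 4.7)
My plan is to prove the proposition by combining an intersection-theoretic interpretation of $W_\xi$ (for the necessity direction) with the Gysin sequence of the circle bundle $S^1 \to UT\Sigma_{g,n} \to \Sigma_{g,n}$ (for both the sufficiency direction and the affine-space statement). The hypothesis $n \geq 1$ will be used implicitly in that $\Sigma_{g,n}$ is then homotopy equivalent to a wedge of circles, so its tangent bundle is trivial and framings exist in the first place.

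First I would establish necessity. Given a framing $\xi$, view it as a section $s_\xi: \Sigma_{g,n} \to UT\Sigma_{g,n}$ of the projection $p$. By construction, $W_\xi = PD(s_\xi)$, the Poincar\'e dual (in this noncompact $3$-manifold) of the image of the section. The fiber class $\zeta$ is represented by the oriented $S^1$-fiber over any point $x \in \Sigma_{g,n}$, and this fiber meets $s_\xi(\Sigma_{g,n})$ transversely in exactly one point, namely $\xi(x)$, with the sign determined by the orientation conventions. Hence $\phi(\zeta) = PD(s_\xi) \cdot \zeta = 1$.

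For sufficiency and the affine statement, I would first run the Gysin sequence for the oriented circle bundle $UT\Sigma_{g,n} \to \Sigma_{g,n}$. Since $n \geq 1$ we have $H^2(\Sigma_{g,n};\bZ) = 0$, so the relevant portion collapses to a short exact sequence
\[
0 \longrightarrow H^1(\Sigma_{g,n};\bZ) \xrightarrow{\ p^*\ } H^1(UT\Sigma_{g,n};\bZ) \xrightarrow{\ \pi_*\ } H^0(\Sigma_{g,n};\bZ) \cong \bZ \longrightarrow 0,
\]
where integration along the fiber $\pi_*$ is precisely evaluation on $\zeta$. This sequence directly yields the ``In particular'' clause: any two classes $\phi, \phi'$ with $\phi(\zeta) = \phi'(\zeta) = 1$ differ by an element of $\ker \pi_* = p^* H^1(\Sigma_{g,n};\bZ)$.

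For the remaining content of the first assertion, I would observe that since the tangent bundle of $\Sigma_{g,n}$ is trivial, the set of framings up to isotopy is a torsor over $[\Sigma_{g,n}, SO(2)] = H^1(\Sigma_{g,n};\bZ)$: any two framings $\xi,\eta$ differ by a map $f_{\xi,\eta}: \Sigma_{g,n} \to S^1$ whose homotopy class $v \in H^1(\Sigma_{g,n};\bZ)$ I would compute satisfies $W_\eta - W_\xi = p^*v$ (unwinding the definition of winding number, the difference over a curve $c$ is exactly $\langle v, [c] \rangle$). Combining this with the existence of at least one framing $\xi_0$ and the surjectivity of $\pi_*$, any $\phi$ with $\phi(\zeta) = 1$ can be written as $W_{\xi_0} + p^*v$ for a unique $v$, and then realized as $W_\xi$ by twisting $\xi_0$ by a representative of $v$.

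The main technical step I expect to have to justify carefully is the equality $W_\eta - W_\xi = p^*v$ on the level of cohomology on $UT\Sigma_{g,n}$, since this bridges the ``sections'' picture of framings with the winding-number-function picture via the Johnson lift. Once this is in place everything else follows formally from the Gysin exact sequence above.
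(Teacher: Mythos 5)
Your argument is correct. The paper offers no proof of its own here---it simply defers to \cite[Section 2]{CalderonSalter}---and your route (Poincar\'e duality of the section for the necessity of $\phi(\zeta)=1$, the Gysin sequence with $H^2(\Sigma_{g,n};\bZ)=0$ for the affine statement, and the torsor structure of framings over $H^1(\Sigma_{g,n};\bZ)$ for surjectivity) is essentially the standard argument carried out in that reference; the one step you rightly flag, that $W_\eta - W_\xi = p^*v$, is settled by checking that the two classes agree on $\zeta$ and on Johnson lifts of simple closed curves, which together generate $H_1(UT\Sigma_{g,n};\bZ)$ since the bundle is trivial for $n \ge 1$.
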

\begin{proof}
    See \cite[Section 2]{CalderonSalter}.
\end{proof}

\para{Properties of winding number functions}
Here we discuss the two characterizing properties of winding number functions: twist-linearity and homological coherence.

\para{Twist-linearity}
The twist-linearity property asserts that if $c, d$ are simple closed curves, then for any winding number function $W$,  
\begin{equation}\label{eqn:twistlin}
    W(T_d(c)) = W(c) + \pair{c,d}W(d),
\end{equation}
where $\pair{c,d}$ denotes the algebraic intersection pairing. In particular, if $d$ is a {\em separating curve}, then $W(T_d(c)) = W(c)$ for all simple closed curves $c$. This property is easy to establish from either the winding number or cohomological perspective.

\para{Homological coherence} The second characterizing property of a winding number function is {\em homological coherence}. Suppose that $S \subset \Sigma_{g,n}$ is a subsurface with boundary components $c_1, \dots, c_k$. Orient each $c_i$ so that $S$ lies to the left. Then homological coherence asserts that for any winding number function $W$,
\begin{equation}\label{eqn:homcoh}
    \sum W(c_i) = \chi(S),
\end{equation}
where $\chi(S)$ denotes the Euler characteristic. This is a consequence of the Poincar\'e-Hopf theorem, or (equivalently) of the equality in $H_1(UT\Sigma_{g,n};\bZ)$
\[
\sum [\hat c_i] = \chi(S)[\zeta].
\]

\subsection{The framed mapping class group} The set of isotopy classes of framings on $\Sigma_{g,n}$ carries an action of the mapping class group $\Mod_{g,n}$, by pullback. For a framing $\xi$ (equivalently, a winding number function, or a suitable cohomology class on $UT\Sigma_{g,n}$), the stabilizer of $\xi$ under this action is written $\Mod_{g,n}[\xi]$ and is called a {\em framed mapping class group}. A basic property of framed mapping class groups that will be occasionally invoked is that, by the discussion of the previous paragraph, every Dehn twist $T_d$ about a separating curve is contained in every $\Mod_{g,n}[\xi]$. In the Introduction, we defined $\cK_{g,1} \le \Mod_{g,1}$ as the subgroup generated by separating twists, implying the following statement.

\begin{prop}\label{prop:kginframed}
    For any framing $\xi$ of $\Sigma_{g,1}$, there is a containment
    \[
    \cK_{g,1} \le \Mod_{g,1}[\xi]. 
    \]
\end{prop}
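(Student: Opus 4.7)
The plan is to derive the containment directly from the twist-linearity formula \eqref{eqn:twistlin} together with the observation that separating curves are null-homologous. Since $\Mod_{g,1}[\xi]$ is by definition the stabilizer of (the isotopy class of) $\xi$, and since Perspective 2 tells us that framings up to isotopy are in bijection with winding number functions, it suffices to show that every Dehn twist $T_d$ around a separating simple closed curve $d$ satisfies $W_\xi \circ T_d = W_\xi$ as functions on $\cS(\Sigma_{g,1})$.

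Fix such a separating simple closed curve $d$, and let $c$ be any oriented simple closed curve. By twist-linearity,
\[
W_\xi(T_d(c)) = W_\xi(c) + \pair{c,d} W_\xi(d).
\]
Because $d$ is separating, it bounds a subsurface and hence represents the zero class in $H_1(\Sigma_{g,1};\bZ)$; thus $\pair{c,d} = 0$ for every oriented simple closed curve $c$. We conclude $W_\xi(T_d(c)) = W_\xi(c)$ for all $c \in \cS(\Sigma_{g,1})$, so $T_d \in \Mod_{g,1}[\xi]$.

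Since $\cK_{g,1}$ is generated by Dehn twists about separating curves and we have just shown each such generator lies in $\Mod_{g,1}[\xi]$, the containment $\cK_{g,1} \le \Mod_{g,1}[\xi]$ follows. There is no serious obstacle: the entire argument reduces to the one-line observation that separating curves have trivial algebraic intersection with everything, so the twist-linearity defect vanishes identically. The only thing to be slightly careful about is that twist-linearity is stated for the cohomology/winding-number incarnation of a framing, but this is exactly what encodes the framing up to isotopy, which is all that is needed to certify membership in $\Mod_{g,1}[\xi]$.
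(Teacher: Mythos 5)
Your proof is correct and follows exactly the paper's own reasoning: the paper derives this proposition from the twist-linearity formula \eqref{eqn:twistlin} together with the observation that a separating curve $d$ satisfies $\pair{c,d}=0$ for all $c$, so that separating twists preserve every winding number function, which encodes the isotopy class of the framing. Nothing to add.
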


\subsection{Winding numbers of arcs} \label{SS:relwnf}
While our ultimate interest is in surfaces with punctures/marked points, it will be necessary in places to consider surfaces with boundary components as well. Here there is a theory of {\em relative winding number functions} that assign winding numbers to arcs as well as curves. We adopt a slightly different perspective here from the original reference \cite[Section 2]{CalderonSalter}, making it clear that relative winding number functions can be formulated as {\em relative} cohomology classes on the unit tangent bundle, exactly parallel to the discussion above.

Let $\Sigma_{g}^b$ denote a surface of genus $g$ with $b$ boundary components. As the mapping class group $\Mod(\Sigma_{g}^b)$ fixes the boundary pointwise, there is a refined notion of {\em relative isotopy} of framings: framings $\phi, \psi$ are {\em relatively isotopic} if they are isotopic through isotopies restricting to the identity on the boundary. 

As in the ``absolute'' setting above, one can view a non-vanishing vector field $\xi$ as a section $\xi: \Sigma_{g}^b \to UT\Sigma_{g}^b$, determining a class $[\xi] \in H_2(UT\Sigma_{g}^b,\wt \partial)$, where $\wt \partial \subset UT \Sigma_{g}^b$ denotes the boundary, a union of $b$ $2$-tori. In order to use this data to determine a relative winding number function, one must specify slightly more information. Let $\wt P$ be a set of $b$ points, one on each boundary component of $UT\Sigma_{g}^b$. A relative winding number function for arcs based only at $\wt P$ can be described as an element of $H^1(UT\Sigma_{g}^b, \wt P)$. The next lemma shows that this can indeed be induced from the data of a non-vanishing vector field.

\begin{lem}\label{L:relwnf}
                There is a well-defined intersection pairing
                \[
                H_1(UT\Sigma_g^b, \wt P) \times H_2(UT\Sigma_g^b, \wt \partial \setminus \wt P) \to \bZ
                \]
                realizing an isomorphism
                \[
                H^1(UT\Sigma_g^b, \wt P) \cong H_2(UT\Sigma_g^b, \wt \partial \setminus \wt P).
                \]
                Consequently, a non-vanishing vector field $\xi: \Sigma_{g}^b \to UT\Sigma_{g}^b$ that avoids $\wt P$ determines a relative fundamental class in $H_2(UT\Sigma_g^b, \wt \partial \setminus \wt P)$ and hence a relative winding number function $W_\xi^+ \in H^1(UT\Sigma_{g}^b, \wt P)$.
            \end{lem}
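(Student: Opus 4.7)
The plan is to establish the isomorphism as an instance of Poincar\'e--Lefschetz duality for the compact oriented $3$-manifold $UT\Sigma_g^b$, whose boundary $\wt \partial$ is a disjoint union of $b$ tori. First I would set up the geometric pairing: given a transverse $1$-cycle $\alpha$ representing a class in $H_1(UT\Sigma_g^b, \wt P)$ and a $2$-cycle $\beta$ representing a class in $H_2(UT\Sigma_g^b, \wt \partial \setminus \wt P)$, define $\pair{\alpha, \beta}$ to be the signed count of transverse intersection points. The key point is that the boundary subsets $\wt P$ and $\wt \partial \setminus \wt P$ are disjoint in $\wt \partial$, so after a small perturbation the only intersections of $\alpha$ and $\beta$ occur in the interior of $UT\Sigma_g^b$, and this count is well-defined at the chain level. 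Verifying that the pairing descends to homology is a standard check: a bounding chain for $\alpha$ can be chosen transverse to $\beta$ with all excess intersection points canceling in pairs, and the disjointness of $\wt P$ and $\wt \partial \setminus \wt P$ ensures that boundary contributions vanish.

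Next I would identify this pairing with the one coming from Poincar\'e--Lefschetz duality. To match the standard codimension-$0$ formulation, replace $\wt P$ with a small open disk neighborhood $\wt P' \subset \wt \partial$ consisting of $b$ small disks, one around each point of $\wt P$. Then $\wt P'$ deformation retracts to $\wt P$ and $\wt \partial \setminus \wt P$ deformation retracts to $\wt \partial \setminus \wt P'$, giving homotopy equivalences of pairs
\[
(UT\Sigma_g^b, \wt P) \simeq (UT\Sigma_g^b, \wt P'), \qquad (UT\Sigma_g^b, \wt \partial \setminus \wt P) \simeq (UT\Sigma_g^b, \wt \partial \setminus \wt P').
\]
Now $\wt P'$ and $\overline{\wt \partial \setminus \wt P'}$ are codimension-$0$ submanifolds of $\wt \partial$ meeting along a common $1$-dimensional boundary, so standard Poincar\'e--Lefschetz duality (cap product with the relative fundamental class $[UT\Sigma_g^b, \wt \partial]$) produces an isomorphism
\[
H^1(UT\Sigma_g^b, \wt P') \xrightarrow{\ \cong\ } H_2(UT\Sigma_g^b, \wt \partial \setminus \wt P').
\]
The cap product description identifies this isomorphism with the Kronecker dual of the geometric intersection pairing set up above, using that the relevant integral homology groups are torsion-free (indeed $UT\Sigma_g^b$ has the homotopy type of a finite CW complex with free $H_*$).

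Finally, given a non-vanishing vector field $\xi: \Sigma_g^b \to UT\Sigma_g^b$ that avoids $\wt P$, the image $\xi(\Sigma_g^b)$ is a properly embedded oriented surface with boundary contained in $\wt \partial \setminus \wt P$. It therefore carries a relative fundamental class $[\xi] \in H_2(UT\Sigma_g^b, \wt \partial \setminus \wt P)$, and I define $W_\xi^+ \in H^1(UT\Sigma_g^b, \wt P)$ to be its image under the duality isomorphism. Concretely, for a relative $1$-cycle $\gamma$ with endpoints in $\wt P$, the value $W_\xi^+(\gamma)$ is the signed count of transverse intersections $\gamma \cdot \xi(\Sigma_g^b)$, which recovers the expected winding-number interpretation: it measures how many times the forward-pointing tangent frame along $\gamma$ rotates relative to $\xi$. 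The main subtlety is the careful bookkeeping at $\wt \partial$ -- in particular the thickening trick that reduces $\wt P$ to a genuine codimension-$0$ boundary piece so that the standard Lefschetz duality applies; everything else is formal once this is in place.
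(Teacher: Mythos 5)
Your proposal is correct and follows essentially the same route as the paper: both invoke the general form of Lefschetz duality for a compact manifold whose boundary is decomposed into two codimension-$0$ pieces, after thickening $\wt P$ to a regular (disk) neighborhood in $\wt \partial$, and both realize the isomorphism via the intersection pairing, with the section $\xi(\Sigma_g^b)$ supplying the relative fundamental class. The extra details you supply (well-definedness of the chain-level pairing, freeness of the homology of $UT\Sigma_g^b \simeq \Sigma_g^b \times S^1$) are consistent elaborations of the paper's terser argument.
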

            \begin{proof}
                The general form of Lefschetz duality asserts that if $M$ is a compact $n$-manifold with boundary $\partial M$, and if $\partial M$ decomposes as a union $\partial M = X \cup Y$, where $X,Y$ are compact $(n-1)$-manifolds with common boundary $\partial X = \partial Y$, then there is an isomorphism
                \[
                H^i(M,X) \cong H_{n-i}(M, Y),
                \]
                realized by the intersection pairing. Enlarging $\wt P \subset \wt \partial = \partial UT\Sigma_g^b$ to a regular neighborhood, this gives the above claim.
            \end{proof}

In general, the winding number of an arc with one or more endpoints at a puncture/marked point is ill-defined. This can be seen as follows: let $\alpha$ be an arc based at a marked point $p \in \Sigma_{g,n}^b$, and let $\gamma$ be a curve encircling $p$. If $W$ were well-defined, it would have to satisfy the twist-linearity formula, and so  $W(T_\gamma(\alpha)) = W(\alpha) + W(\gamma)$. On the other hand, $T_\gamma(\alpha)$ is isotopic to $\alpha$. We come to the following conclusion.
\begin{prop}\label{prop:ewnf}
    Let $\Sigma_{g,n}$ be a surface of genus $g$ with $n$ marked points. Let $\xi$ be a framing of $\Sigma_{g,n}$ with associated winding number function $W_\xi$. Let $P_0$ denote the set of marked points with zero winding number (i.e. those marked points $p$ for which $W_\xi(\gamma) = 0$ for $\gamma$ a small curve encircling $p$). Let $\cS^+_0(\Sigma_{g,n})$ denote the set of isotopy classes of oriented simple closed curves and simple arcs with both endpoints in $P_0$. Then $W_\xi$ admits an extension $W_\xi^+: \cS^+_0(\Sigma_{g,n})\to \bZ$.
\end{prop}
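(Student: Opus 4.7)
The plan is to reduce the statement to Lemma~\ref{L:relwnf} by blowing up each puncture in $P_0$ to a boundary circle. First, I would remove a small open disk $D_p$ around each $p \in P_0$ to obtain a surface $\Sigma^{\circ}$ of genus $g$ with $|P_0|$ boundary circles (with the remaining marked points of $\Sigma_{g,n} \setminus P_0$ retained as punctures). The framing $\xi$ restricts to a framing of $\Sigma^{\circ}$. For each $p \in P_0$, pick a basepoint $q_p \in \partial D_p$ and set $\widetilde{q}_p := (q_p, \xi(q_p)) \in UT\Sigma^{\circ}$; write $\widetilde{P} := \{\widetilde{q}_p\}_{p \in P_0}$. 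Lemma~\ref{L:relwnf} then produces a relative winding number class $W_\xi^+ \in H^1(UT\Sigma^{\circ}, \widetilde{P}; \bZ)$. For a simple closed curve $c \in \cS_0^+(\Sigma_{g,n})$, the pairing of $W_\xi^+$ with the Johnson lift of $c$ recovers the original value $W_\xi(c)$, so this defines $W_\xi^+$ on curves.

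To extend $W_\xi$ to an arc $\alpha \in \cS_0^+(\Sigma_{g,n})$ with endpoints $p, p' \in P_0$, I would truncate $\alpha$ to $\bar\alpha \subset \Sigma^{\circ}$, form its Johnson lift $\widehat{\bar\alpha} \subset UT\Sigma^{\circ}$, and connect each of its endpoints to the appropriate $\widetilde{q} \in \widetilde{P}$ through the following canonical auxiliary path in the boundary torus $T^2_p := UT\Sigma^{\circ}|_{\partial D_p}$: first rotate the tangent vector to agree with $\xi$ via the shorter arc in the $S^1$-fiber, and then slide along $\partial D_p$ using the $\xi$-trivialization. The concatenation represents a class in $H_1(UT\Sigma^{\circ}, \widetilde{P})$, and I would define $W_\xi^+(\alpha)$ to be its pairing with $W_\xi^+$.

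The main obstacle will be verifying isotopy invariance; independence from the radius of $D_p$ and from the choice of basepoint $q_p$ then reduces to the same computation. Concretely, the only nontrivial isotopies are those that drag an endpoint of $\alpha$ once around the puncture $p \in P_0$, which corresponds to the action of a Dehn twist about a small loop $\gamma_p$ encircling $p$. Such a move alters the class of the adjusted relative $1$-cycle by the image in $H_1(UT\Sigma^{\circ}, \widetilde{P})$ of the $\xi$-lift of $\partial D_p$ in $H_1(T^2_p)$. By Proposition~\ref{prop:framingchar}, this $\xi$-lift differs from the Johnson lift $\widehat{\partial D_p}$ by $W_\xi(\partial D_p)[\zeta_p]$, and by the hypothesis $p \in P_0$ one has $W_\xi(\partial D_p) = 0$, so the two lifts coincide and both pair trivially with $W_\xi^+$. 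Thus the ambiguity vanishes, $W_\xi^+(\alpha)$ is a well-defined integer depending only on the isotopy class of $\alpha$, and the construction yields the desired extension.
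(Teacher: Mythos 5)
Your proposal is correct and follows essentially the same route as the paper: blow up the points of $P_0$ to boundary circles, lift the arc to the blowup, and observe that the only ambiguity in the lift is a twist about a boundary component, whose effect on the winding number is a multiple of $W_\xi(\partial D_p)=0$. The paper phrases this last step via twist-linearity rather than your homological pairing with the class from Lemma \ref{L:relwnf}, but the mechanism is identical.
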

\begin{proof}
    To construct $W_\xi^+$, blow up each marked point to a boundary component and extend $\xi$ to a framing on the blowup $\Sigma_g^n$. Choose basepoints in $UT\Sigma_g^n$ at each of the boundary components corresponding to $P_0$. For an arc $\alpha$ with endpoints in $P_0$, lift $\alpha$ to an arc $\tilde \alpha$ on $\Sigma_g^n$ based at the chosen basepoints. The isotopy class of $\tilde \alpha$ is unique only up to twists about each of the boundary components. Since the winding number of each boundary component is zero by assumption, it follows by twist-linearity that the winding number of all such $\tilde \alpha$ are equal, and hence that $W_\xi^+$ is well-defined. 
\end{proof}

\section{The affine holonomy bundle: Theorems \ref{T:Main:InvariantForms} and \ref{T:Main:InvariantSection}}\label{S:Proof-OneConePoint}

\subsection{The affine holonomy bundle: basics}\label{SS:affholbasics}
Throughout, let $K$ be either $\bR$ or $\bC$. The {\em affine group over $K$} is the group
\[
\Aff(K) = \{f(z) = az + b \mid a \in K^\times, b \in K\}.
\]
For $K = \bR$ we also define the subgroup
\[
\Aff^+(\bR) = \{f(z) = az + b \mid a \in \bR_{>0}, b \in \bR\}.
\]
The group structure on $\Aff(K)$ is by composition: if $f = az + b$ and $g = cz + d$, then the product $f \circ g$ is given by
\begin{equation}\label{E:affmult}
    f \circ g = (ac)z + (b+ad).
\end{equation}

The representation variety $\Hom(\pi_1\Sigma_g, \Aff(K))$ admits a projection
\[
\Pi: \Hom(\pi_1\Sigma_g, \Aff(K)) \to H^1(\Sigma_g; K^\times)
\]
that sends $f: \pi_1\Sigma_g \to \Aff(K)$ given by $f(\gamma) = \chi(\gamma) z + \lambda(\gamma)$ to the character $\chi \in H^1(\Sigma_g;K^\times)$. Conceptually, this is just the {\em derivative} of the affine map.

To describe the fibers of this map, we recall the notion of a {\em $1$-cocycle}. We will encounter this notion in two places: here, in the context of the fibers of the derivative map, as well as in the Appendix, where we will consider cocycles for the mapping class group. Accordingly, we formulate this discussion in general terms. We give a minimal overview of what we will need; for more details, see the standard reference \cite{brown}. 

Let $X$ be a space with fundamental group $G$, and let $V$ be a $\bZ[G]$-module, i.e. an abelian group equipped with an action of $G$. A {\em $1$-cocycle} (also known as a {\em crossed homomorphism}) is a map $f: G \to V$ that satisfies the cocycle equation
\begin{equation}\label{eqn:cocycle}
    f(xy) = f(x) + x \cdot f(y)
\end{equation}
for all $x,y \in G$. The space of twisted cocycles is written as $Z^1(X,V)$, or else as $Z^1(G,V)$, depending on whether one wants to emphasize the space or its fundamental group. There is a map $\delta: V \to Z^1(G,V)$ sending $v \in V$ to the cocycle 
\[
\delta(v)(x) = (1-x)\cdot v.
\]
Such $\delta$ is called a {\em coboundary (in the sense of group cohomology)}. The quotient
\[
H^1(G,V) = Z^1(G,V) / \Im(\delta)
\]
is the first cohomology of $G$ (or $X$)\footnote{Experts will notice that we are conflating the cohomology of the space with that of its fundamental group. This is harmless since these coincide in degree one, but the reader should be aware that this is not necessarily true in higher degrees.} with twisted coefficient module $V$.

In our immediate context, we consider $G = \pi_1\Sigma_g$ and the $G$-module $\bC_\chi$ with underlying group $\bC$, carrying an action of $\pi_1\Sigma_g$ via a character $\chi: \pi_1\Sigma_g \to \bC^\times$. For brevity, we write
\[
Z^1_\chi := Z^1(\pi_1\Sigma_g, \bC_\chi).
\]

\begin{lem}
    Fix $\chi \in H^1(\Sigma_g; K^\times)$. The fiber $\Pi^{-1}(\chi) \subset \Hom(\pi_1 \Sigma_g, \Aff(K))$ is in bijection with the space of cocycles $Z^1_\chi$, via the correspondence
    \[
    \lambda \in Z_\chi^1 \longleftrightarrow f(\gamma) = \chi(\gamma)z + \lambda(\gamma).
    \]
\end{lem}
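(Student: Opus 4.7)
The plan is to verify that the prescribed correspondence $\lambda \longleftrightarrow f$ is a well-defined bijection by checking that the condition ``$f$ is a homomorphism'' translates exactly into ``$\lambda$ satisfies the cocycle equation \eqref{eqn:cocycle} for the module $K_\chi$''. Concretely, any element $f \in \Pi^{-1}(\chi)$ is, by definition of $\Pi$, a map $f: \pi_1 \Sigma_g \to \Aff(K)$ whose derivative part equals $\chi$; hence it can be written uniquely as $f(\gamma) = \chi(\gamma) z + \lambda(\gamma)$ for some function $\lambda: \pi_1\Sigma_g \to K$. So the correspondence is a bijection of sets from $\Pi^{-1}(\chi)$ to $\mathrm{Map}(\pi_1\Sigma_g, K)$, and the task is just to identify which functions $\lambda$ correspond to homomorphisms.

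For that, I would unwind the multiplication law \eqref{E:affmult} applied to $f(\gamma)$ and $f(\eta)$: one computes
\[
f(\gamma) \circ f(\eta) = \chi(\gamma)\chi(\eta) z + \bigl(\lambda(\gamma) + \chi(\gamma) \lambda(\eta)\bigr).
\]
Since $\chi$ is already a homomorphism into $K^\times$, the equality $f(\gamma \eta) = f(\gamma) \circ f(\eta)$ reduces to the single equation
\[
\lambda(\gamma\eta) = \lambda(\gamma) + \chi(\gamma) \lambda(\eta),
\]
which is exactly \eqref{eqn:cocycle} for the $\pi_1 \Sigma_g$-module $K_\chi$ (with $K = \bR$ or $\bC$, using the evident analogue $Z^1(\pi_1 \Sigma_g, K_\chi)$ of the group $Z^1_\chi$ defined in the text). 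Thus $f$ is a homomorphism if and only if $\lambda \in Z^1_\chi$, and the bijection is established.

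There is essentially no obstacle here; the only ``care'' needed is in tracking that the $\pi_1\Sigma_g$-action on $K$ defining the cocycle module is precisely multiplication by $\chi$, which is forced by the formula \eqref{E:affmult} (in particular by the asymmetric way the translational parts combine). I would also briefly remark that this bijection is natural: the $K$-vector space structure on $Z^1_\chi$ transports to an affine structure on the fiber $\Pi^{-1}(\chi)$, which is the structure that will be needed downstream when one integrates fiberwise volume forms in the proof of Theorem \ref{T:Main:InvariantSection}.
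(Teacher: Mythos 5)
Your proof is correct and follows essentially the same route as the paper: write $f(\gamma) = \chi(\gamma)z + \lambda(\gamma)$, expand the composition using \eqref{E:affmult}, and observe that the homomorphism condition is exactly the cocycle equation for the module $K_\chi$. The additional remarks about naturality and the affine structure on the fiber are not in the paper's proof but are accurate and harmless.
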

\begin{proof}
    Suppose $f \in \Hom(\pi_1 \Sigma_g, \Aff(K))$ is of the form $f(\gamma) = \chi(\gamma)z + \lambda(\gamma)$. As $f: \pi_1\Sigma_g \to \Aff(K)$ is a homomorphism, for any $\gamma_1, \gamma_2 \in \pi_1\Sigma_g$, there must be an equality $f(\gamma_1 \gamma_2) = f(\gamma_1) \circ f(\gamma_2)$, so that by \eqref{E:affmult},
    \[
    \chi(\gamma_1\gamma_2)z + \lambda(\gamma_1 \gamma_2) = (\chi(\gamma_1) \chi(\gamma_2)) z + (\lambda(\gamma_1) + \chi(\gamma_1) \lambda(\gamma_2)).
    \]
    Thus $f(\gamma) = \chi(\gamma)z + \lambda(\gamma)$ determines a homomorphism if and only if $\lambda \in Z_\chi^1$.
\end{proof}

Let $\Sigma_g$ be a genus $g$ surface with a basepoint $p$. Fix a set of simple closed curves $S = \{a_1, b_1 \hdots, a_g, b_g \} \subseteq \pi_1(\Sigma_g, p)$ so that 
\[
\pi_1 := \pi_1(\Sigma_g, p) = \langle a_1, b_1, \dots, a_{g}, b_g | [a_1, b_1] \hdots [a_g,b_g] = 1 \rangle.
\]

\begin{lem}\label{L:Zdim}
    If $\chi = \mathbf{1}\in H^1(\Sigma_g;K^\times)$ is the trivial character, then $Z_\chi^1 \cong K^{2g}$, and otherwise $Z_\chi^1 \cong K^{2g-1}$.
\end{lem}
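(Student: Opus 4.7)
My plan is to compute $Z^1_\chi$ directly using the given presentation of $\pi_1$. A cocycle $\lambda \in Z^1_\chi$ is determined by its values $\lambda(a_i), \lambda(b_i) \in K$, giving an inclusion $Z^1_\chi \hookrightarrow K^{2g}$. The image is cut out by the requirement that $\lambda$ vanish on the unique relator $r = [a_1,b_1]\cdots[a_g,b_g]$, since $\lambda(1) = 0$ in any cocycle. The whole question is therefore whether the induced linear functional $\Phi_\chi: K^{2g} \to K$, $\lambda \mapsto \lambda(r)$, is zero or not.

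The first step is to compute $\Phi_\chi$ explicitly. Using $\lambda(gh) = \lambda(g) + \chi(g)\lambda(h)$ and the derived identity $\lambda(g^{-1}) = -\chi(g)^{-1}\lambda(g)$, a short calculation yields the commutator formula
\[
\lambda([a,b]) = (1-\chi(b))\lambda(a) - (1-\chi(a))\lambda(b).
\]
Iterating the cocycle equation on $r = [a_1,b_1]\cdots [a_g,b_g]$ then gives
\[
\Phi_\chi(\lambda) = \sum_{i=1}^g \chi([a_1,b_1]\cdots[a_{i-1},b_{i-1}])\bigl((1-\chi(b_i))\lambda(a_i) - (1-\chi(a_i))\lambda(b_i)\bigr).
\]
Since each $[a_j,b_j]$ lies in the commutator subgroup, $\chi([a_j,b_j]) = 1$, and the prefactors all equal $1$, so
\[
\Phi_\chi(\lambda) = \sum_{i=1}^g \bigl((1-\chi(b_i))\lambda(a_i) - (1-\chi(a_i))\lambda(b_i)\bigr).
\]

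Now the two cases fall out. If $\chi = \mathbf{1}$, then every coefficient $1 - \chi(\star)$ vanishes, so $\Phi_\chi \equiv 0$ and there is no constraint. Thus $Z^1_\mathbf{1} \cong K^{2g}$. (One also recognizes in this case that $\lambda$ is simply a homomorphism $\pi_1 \to K$, which automatically kills commutators.) If $\chi \neq \mathbf{1}$, then at least one of the $2g$ coefficients $(1 - \chi(a_i))$ or $(1-\chi(b_i))$ is nonzero, so $\Phi_\chi$ is a nonzero linear functional on $K^{2g}$, cutting the dimension by one to give $Z^1_\chi \cong K^{2g-1}$.

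The main possible pitfall is verifying that the prefactors $\chi([a_1,b_1]\cdots[a_{i-1},b_{i-1}])$ really all equal $1$; this is the only place where the specific structure of the surface relator (versus an arbitrary word) is used, and it is what makes the expression for $\Phi_\chi$ symmetric and easy to analyze. As a sanity check one may alternatively invoke group cohomology: for a closed orientable surface group and any nontrivial character, $\dim H^1(\pi_1, K_\chi) = 2g - 2$ by Euler characteristic (since $H^0 = H^2 = 0$ in this case), while $\dim B^1 = 1$ (the coboundary map $\delta$ is injective because some $\chi(\gamma) \neq 1$), yielding $\dim Z^1_\chi = 2g-1$; and for trivial $\chi$, $\dim H^1 = 2g$ while $B^1 = 0$, giving $2g$.
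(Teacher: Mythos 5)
Your proof is correct and follows essentially the same route as the paper: both identify $Z^1_\chi$ with the solution set in $K^{2g}$ of the single linear condition $\sum_{i=1}^g\bigl((1-\chi(b_i))\lambda(a_i)-(1-\chi(a_i))\lambda(b_i)\bigr)=0$ coming from the surface relator, and observe that this condition is trivial exactly when $\chi=\mathbf{1}$. You simply spell out more of the commutator computation (and the vanishing of the prefactors $\chi([a_1,b_1]\cdots[a_{i-1},b_{i-1}])$) than the paper does.
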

\begin{proof}
    Any $\lambda \in Z_\chi^1$ is determined by its values $(\lambda(s))_{s \in S}$, subject only to the condition
    \begin{equation}\label{eqn:twistedcocyclecondition}
0 = \sum_{i=1}^g \lambda\left( [a_i, b_i] \right) = \sum_{i=1}^g \left( (1-\chi(b_i)) \lambda(a_i) - (1-\chi(a_i))\lambda(b_i)  \right). 
\end{equation}
This realizes $Z_\chi^1$ as the solution set of this equation in $K^{2g}$. If $\chi = \mathbf{1}$ then this equation is trivial, otherwise its solutions form a subspace of codimension one.
\end{proof}

We define
\[
H^1(\Sigma_g;K^\times)^\circ := H^1(\Sigma_g;K^\times) \setminus \{\mathbf{1}\}
\]
and
\[
\cV_K := \Pi^{-1}(H^1(\Sigma_g;K^\times)^\circ) \le \Hom(\pi_1,\Aff(K)).
\]
In the case $K = \bR$, we also define $\cV_\bR^+$ as the corresponding object replacing $\Aff(\bR)$ with $\Aff^+(\bR)$.

Following Lemma \ref{L:Zdim}, we expect $\Pi:\cV_K \to H^1(\Sigma_g;K^\times)^\circ$ to be a vector bundle of rank $2g-1$. This is indeed the case, and we will exhibit this explicitly. To that end, we will say that $a_i \in S$ is the {\em dual} of $b_i$, and conversely.

\begin{lem}\label{L:StandardBasisOfCocycles}
Fix $c \in S$ with dual $d$, and fix $\chi \in H^1(\Sigma_g;K^\times)^\circ$. The map $p_c: Z^1_\chi \ra K^{2g-1}$ given by sending $\lambda$ to $(\lambda(s))_{s \in S - \{c\}}$ is a bijection if and only if $\chi(d) \ne 1$. 
\end{lem}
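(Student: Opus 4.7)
The plan is to read off the coefficient of $\lambda(c)$ in the single linear cocycle equation cut out in the proof of Lemma~\ref{L:Zdim}, and then do a short dimension/kernel count.

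First I would recall from the proof of Lemma~\ref{L:Zdim} that $Z_\chi^1$ is precisely the subspace of $K^{2g}$ (with coordinates $(\lambda(s))_{s \in S}$) cut out by the single equation
\[
\sum_{i=1}^g (1-\chi(b_i))\lambda(a_i) - (1-\chi(a_i))\lambda(b_i) = 0. \tag{$\ast$}
\]
The key numerical observation is that the coefficient of $\lambda(c)$ in $(\ast)$ is exactly $\pm(1-\chi(d))$: if $c = a_i$ then $d = b_i$ and this coefficient is $(1-\chi(b_i))$; if $c = b_i$ then $d = a_i$ and it is $-(1-\chi(a_i))$. In either case the coefficient vanishes if and only if $\chi(d) = 1$.

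Next, since $\chi \in H^1(\Sigma_g;K^\times)^\circ$ is nontrivial, Lemma~\ref{L:Zdim} gives $\dim Z^1_\chi = 2g-1$, matching the dimension of the codomain $K^{2g-1}$. Hence the linear map $p_c$ is a bijection if and only if it is injective. The kernel of $p_c$ consists of cocycles $\lambda$ with $\lambda(s)=0$ for all $s \in S\setminus\{c\}$, and for such $\lambda$ the equation $(\ast)$ reduces to $\pm(1-\chi(d))\lambda(c) = 0$. Therefore $\ker(p_c) = 0$ exactly when $\chi(d)\ne 1$, proving the claim.

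The argument is entirely mechanical once $(\ast)$ is in hand; there is no real obstacle. The only mild subtlety is checking the sign convention on the coefficient of $\lambda(c)$, which does not affect the nonvanishing criterion $\chi(d)\ne 1$, so I would only remark on it briefly rather than carry out both cases in detail.
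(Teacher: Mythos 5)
Your proof is correct and takes essentially the same approach as the paper: both hinge on observing that the coefficient of $\lambda(c)$ in the single defining equation \eqref{eqn:twistedcocyclecondition} is $\pm(1-\chi(d))$. The paper solves directly and uniquely for $\lambda(c)$ to get bijectivity in one step, whereas you check injectivity and invoke the dimension count from Lemma \ref{L:Zdim}; this is only a difference in bookkeeping.
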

\begin{proof}
For notational simplicity, we will consider only the case $c = a_i$ and $d = b_i$. Returning to the cocycle equation \eqref{eqn:twistedcocyclecondition}, if $(1-\chi(b_i)) = 0$, then, since $\chi \not \equiv 1$, \eqref{eqn:twistedcocyclecondition} determines a nontrivial hyperplane that contains the image of $p_{a_i}$. Conversely, if $(1-\chi(b_i)) \ne 0$, then given any element $v \in K^{2g-1}$ there is a unique choice of $\lambda(a_{i})$ that completes $v$ into a tuple $(\lambda(a_1), \dots, \lambda(b_g))$ satisfying \eqref{eqn:twistedcocyclecondition}.
\end{proof}

Define $\alpha_i: Z_\chi^1 \ra K$ by $\alpha_i(\lambda) := \lambda(a_i)$ and $\beta_i: Z_\chi^1 \ra K$ by $\beta_i(\lambda) := \lambda(b_i)$ for $1 \leq i \leq g$. Rewriting Equation \ref{eqn:twistedcocyclecondition} gives
\begin{equation}\label{eqn:twistedcocyclecondition2}
0 = \sum_{i=1}^g (1-\chi(b_i)) \alpha_i - (1 - \chi(a_i))\beta_i.
\end{equation}

Given a character $\chi: \pi_1 \ra K^\times$, set $x_i := \chi(a_i)$ and $y_i = \chi(b_i)$. 


\begin{cor}\label{C:FiberBundle}
There is a finite collection of Zariski open sets covering $H^1(\Sigma_g, K^\times)^\circ$ over which $\cV_K$ is trivial. The transition functions between different trivializations are rational. 
\end{cor}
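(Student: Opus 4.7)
The plan is to use Lemma \ref{L:StandardBasisOfCocycles} directly to produce trivializations, one for each element of $S$. Namely, for each $c \in S$ with dual $d$, set
\[
U_c := \{ \chi \in H^1(\Sigma_g; K^\times)^\circ : \chi(d) \neq 1 \}.
\]
Each $U_c$ is Zariski open, being the complement of the hypersurface $\{\chi(d) = 1\}$ inside the complement of the point $\{\mathbf{1}\}$. Lemma \ref{L:StandardBasisOfCocycles} then asserts that $p_c: \cV_K|_{U_c} \to U_c \times K^{2g-1}$, sending $(\chi, \lambda)$ to $(\chi, (\lambda(s))_{s \in S \setminus \{c\}})$, is a fiberwise linear bijection and so yields a trivialization of $\cV_K$ over $U_c$.

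First I would verify that $\{U_c\}_{c \in S}$ covers $H^1(\Sigma_g; K^\times)^\circ$. If $\chi$ is any nontrivial character, then since $S$ generates $\pi_1$ (and hence maps to a generating set of its abelianization), there must exist $s \in S$ with $\chi(s) \neq 1$. Writing $s$ as the dual of its partner $c \in S$, we have $\chi \in U_c$, so the covering property holds.

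Next I would extract the transition functions. On the overlap $U_c \cap U_{c'}$, the transition $p_{c'} \circ p_c^{-1}$ is the identity on every coordinate $\lambda(s)$ with $s \in S \setminus \{c, c'\}$, and it must additionally express $\lambda(c')$ in terms of the $p_c$-data. This expression is obtained by solving the cocycle relation \eqref{eqn:twistedcocyclecondition2} for $\lambda(c')$; the coefficient of $\lambda(c')$ in that equation is $\pm(1 - \chi(c))$ (with sign depending on whether $c'$ is an $a$- or $b$-generator), which is nonzero precisely because $\chi \in U_{c'}$. The resulting formula is linear in the remaining $\lambda(s)$ with coefficients that are polynomials in the values $\chi(a_i), \chi(b_i)$, divided by $(1 - \chi(c))$. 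Hence the transition function is rational on $U_c \cap U_{c'}$.

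No step here is especially hard — the content of the corollary is really a repackaging of Lemma \ref{L:StandardBasisOfCocycles} together with the observation that solving one linear equation over a family of vector spaces produces rational transitions. The only point that needs a moment of care is checking that the collection $\{U_c\}_{c \in S}$ actually covers $H^1(\Sigma_g; K^\times)^\circ$, but this reduces to the fact that $S$ generates $H_1(\Sigma_g; \bZ)$.
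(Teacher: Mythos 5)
Your argument is correct and is essentially the paper's own proof: the paper defines the same open sets $U_c = \{\chi : \chi(d)\neq 1\}$, invokes Lemma \ref{L:StandardBasisOfCocycles} for the trivializations, and cites Equation \eqref{eqn:twistedcocyclecondition2} for rationality of the transitions (your explicit check that the $U_c$ cover $H^1(\Sigma_g;K^\times)^\circ$ is a worthwhile addition the paper leaves implicit). One bookkeeping slip that does not affect the argument: for $p_{c'}\circ p_c^{-1}$ the new coordinate to be produced is $\lambda(c)$ rather than $\lambda(c')$, and the coefficient of the variable being solved for in \eqref{eqn:twistedcocyclecondition2} is $\pm(1-\chi(d))$ with $d$ the \emph{dual} of that variable's curve (nonvanishing because $\chi$ lies in the corresponding $U$), not $\pm(1-\chi(c))$.
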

\begin{proof}
For each $c \in S$, let $U_c \subseteq H^1(\Sigma_g, K^\times)^\circ$ be the subset where $\chi(d) \ne 1$, where $d$ is the dual of $c$. By Lemma \ref{L:StandardBasisOfCocycles}, we may use $(\alpha_i, \beta_i)_{i=1}^g$ (ignoring the function corresponding to $c$) to trivialize the bundle over $U_c$. By Equation \ref{eqn:twistedcocyclecondition2}, the change of trivialization matrix has entries that are rational functions of $(x_i, y_i)_{i=1}^g$.
%
\end{proof}

\subsection{Fiberwise volume forms}

Our goal is to construct a fiberwise volume form on $\cV_\bR$ that is equivariant with respect to the action of some subgroup of the mapping class group. We are therefore naturally led to a study of the (dual) determinant line bundle. Although our ultimate interest is in the setting $K = \bR$, our method of proof will require an appeal to the Nullstellensatz; for that reason, we continue to allow the possibility of (and occasionally necessitate) $K = \bC$.

\begin{lem}\label{L:TrivialDeterminantBundle}
$\bigwedge^{2g-1} \cV_K^*$ is a trivial line bundle.
\end{lem}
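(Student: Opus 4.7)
The plan is to realize $\cV_K$ as the kernel of a surjective morphism between trivial vector bundles, and then to deduce triviality of the determinant from the multiplicativity of determinants in short exact sequences. Under the identification of $Z^1_\chi$ as a subspace of $K^{2g}$ via $\lambda \mapsto (\lambda(a_i), \lambda(b_i))_{i=1}^g$, Equation \ref{eqn:twistedcocyclecondition2} realizes $\cV_K|_\chi$ as the kernel of the linear form
\[
\phi_\chi : K^{2g} \to K, \qquad (u_i, v_i)_{i=1}^g \mapsto \sum_{i=1}^g (1-y_i) u_i - (1-x_i) v_i,
\]
where $x_i = \chi(a_i)$ and $y_i = \chi(b_i)$. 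Since the coefficients are regular functions on $H^1(\Sigma_g;K^\times) \cong (K^\times)^{2g}$, the fiberwise forms $\phi_\chi$ assemble into a morphism $\phi : \underline{K^{2g}} \to \underline{K}$ between trivial bundles over $H^1(\Sigma_g;K^\times)^\circ$, where $\underline{V}$ denotes the trivial bundle with fiber $V$. Surjectivity on each fiber is immediate because $\chi \ne \mathbf{1}$ forces some coefficient of $\phi_\chi$ to be nonzero.

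The resulting short exact sequence of vector bundles
\[
0 \to \cV_K \to \underline{K^{2g}} \xrightarrow{\phi} \underline{K} \to 0
\]
yields the isomorphism $\det \underline{K^{2g}} \cong \det \cV_K \otimes \det \underline{K}$ upon taking top exterior powers. Since both $\det \underline{K^{2g}}$ and $\det \underline{K}$ are trivial line bundles with canonical nonvanishing global sections, it follows that $\det \cV_K$ is trivial, and dualizing gives the desired triviality of $\bigwedge^{2g-1}\cV_K^*$.

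An explicit global section can also be exhibited. Let $\Omega$ be the standard top form on $\underline{K^{2g}}$, and let $\nu$ be any local section of $\underline{K^{2g}}$ with $\phi(\nu) = 1$. Then the restriction of $\iota_\nu \Omega$ to $\cV_K$ is a nowhere-vanishing $(2g-1)$-form on its fibers; it is independent of the choice of $\nu$ modulo sections of $\cV_K$, and such local lifts $\nu$ exist on each element of the Zariski cover from Corollary \ref{C:FiberBundle}, so these local sections patch into a global trivialization. The argument is essentially formal linear algebra and I do not foresee any genuine obstacle; the only subtlety is confirming that $\phi$ is a bundle morphism rather than merely a pointwise map, which is immediate from the polynomial dependence of its matrix entries on $(x_i^{\pm 1}, y_i^{\pm 1})$.
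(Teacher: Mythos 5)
Your proof is correct, and it arrives at essentially the same explicit local sections as the paper, but by a cleaner and more formal route. The paper's proof of Lemma \ref{L:TrivialDeterminantBundle} writes down the candidate sections $\omega_c = (1-\chi(d))^{-1}\, \alpha_1 \wedge \beta_1 \wedge \cdots \wedge \widehat{\gamma} \wedge \cdots \wedge \alpha_g \wedge \beta_g$ on each chart $U_c$ by hand and then verifies directly, by solving Equation \eqref{eqn:twistedcocyclecondition2} for one coordinate functional in terms of the others, that $\omega_c = \omega_s$ on $U_c \cap U_s$. You instead package the cocycle condition as a surjection $\phi \colon \underline{K^{2g}} \to \underline{K}$ of trivial bundles with kernel $\cV_K$ (surjective fiberwise precisely because $\chi \ne \mathbf{1}$), so that triviality of $\det \cV_K$ follows formally from multiplicativity of determinants in short exact sequences; and your contraction $\iota_\nu \Omega$, for any lift $\nu$ with $\phi(\nu) = 1$, recovers the paper's $\omega_c$ up to sign when $\nu$ is taken to be the suitably rescaled coordinate vector dual to $c$. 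The overlap compatibility is then automatic, since two lifts $\nu, \nu'$ differ by an element of $\ker \phi$ and $\Omega$ annihilates any $2g$ vectors lying in the $(2g-1)$-dimensional kernel. What your approach buys is that the patching computation disappears and the signs are handled canonically rather than chart by chart; what the paper's buys is the explicit formula $\omega_\chi = (1-\chi(a_g))^{-1}\gamma_1 \wedge \cdots \wedge \gamma_{2g-1}$ over $U_{b_g}$, which is used verbatim in the proof of Lemma \ref{L:CocycleIsRational}. If you adopt your version you should still record that your canonical section agrees with this expression up to a global sign, which is harmless for the cocycle $A(f,\chi)$ but needs to be said for the later computations to go through unchanged.
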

\begin{proof}
Let $c \in S$ have dual $d$, and write $\gamma: Z_\chi^1 \to K$ for the associated functional $\gamma(\lambda) = \lambda(c)$; define $\delta$ similarly for $d$. Define the $2g-1$-form on $U_c$
\[
\omega_c := (1-\chi(d))^{-1} \alpha_1 \wedge \beta_1 \wedge \dots \wedge  \widehat{\gamma} \wedge \dots \wedge \alpha_g \wedge \beta_g.
\]

Let $s \in S$ be some other curve, with dual $t$; write $\sigma, \tau$ for the associated functionals. Define $\epsilon_s$ to be $1$ if $s = \alpha_i$ for some $i$ and to be $-1$ if $s = \beta_i$. By Equation \ref{eqn:twistedcocyclecondition2}, on $U_{c} \cap U_{s}$ we see that 
\[
\sigma = -\epsilon_s(1-\chi(t))^{-1}\epsilon_c(1 - \chi(d))\gamma + \text{(terms not involving $\sigma$ or $\gamma$)}).
\]

Plugging in this expression for $\sigma$ into the equation for $\omega_c$, we see that $\omega_c = \omega_s$ on $U_c \cap U_s$, and so assembles into a non-vanishing section of the determinant line bundle.
\end{proof}

Let $\omega$ be the non-vanishing section of $\bigwedge^{2g-1}\cV_K^*$ defined in Lemma \ref{L:TrivialDeterminantBundle}. Abusing notation, let $\omega_\chi$ be the section defined over the character $\chi$. This is a (possibly complex) multiple of the volume form on $Z^1_\chi$ and hence it defines a measure, which we write as $|\omega_\chi|$. We recall the following standard fact from linear algebra. 

\begin{lem}\label{L:LinearAlgebra}
Fix a positive integer $N$. Suppose that $(e_1, \hdots, e_N)$ and $(f_1, \hdots, f_N)$ are bases of vector spaces $V$ and $W$. Let $(x_1, \hdots, x_N)$ and $(y_1, \hdots, y_N)$ be the dual bases of $V^*$ and $W^*$, i.e. bases so that $x_i(e_j) = y_i(f_j) = \delta_{ij}$ where $\delta_{ij}$ is $1$ if $i = j$ and $0$ otherwise. Set $\omega_V := x_1 \wedge \hdots \wedge x_N$ and $\omega_W = y_1 \wedge \hdots \wedge y_N$. Let $T: V \ra W$ be a linear isomorphism, which specifies a matrix using the indicated bases. Then $T^* \omega_W = \det(T) \omega_V$. If $V$ and $W$ are real, then $|\det(T)| T_*|\omega_V| = |\omega_W|$.
\end{lem}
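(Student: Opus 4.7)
Both identities are standard consequences of the definitions of the exterior power, the determinant, and the Lebesgue measure associated to a basis. The plan is to verify the first identity by direct evaluation on the distinguished basis of $V$, and then to deduce the measure-theoretic statement from it via the change of variables formula for linear maps.

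For the first identity, I would evaluate both sides on the ordered tuple $(e_1, \ldots, e_N)$. By definition of the pullback, $(T^* \omega_W)(e_1, \ldots, e_N) = \omega_W(T e_1, \ldots, T e_N)$. Writing $T e_j = \sum_i T_{ij} f_i$ in the basis $(f_i)$, where $(T_{ij})$ is precisely the matrix representing $T$ in the chosen bases, the multilinearity and alternation of $\omega_W = y_1 \wedge \cdots \wedge y_N$, together with $y_i(f_j) = \delta_{ij}$, give $\omega_W(T e_1, \ldots, T e_N) = \det(T_{ij}) = \det(T)$. Meanwhile $\omega_V(e_1, \ldots, e_N) = 1$. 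Since the space of alternating $N$-forms on the $N$-dimensional space $V$ is one-dimensional, two such forms agreeing up to a scalar on one basis agree everywhere, and so $T^* \omega_W = \det(T)\, \omega_V$.

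For the measure statement (assuming $V$ and $W$ real), I would combine the first identity with the change of variables formula. The measures $|\omega_V|$ and $|\omega_W|$ are characterized as the Lebesgue measures assigning unit volume to the parallelepipeds spanned by $(e_i)$ and $(f_i)$ respectively. The first part then yields, for any measurable $B \subseteq V$, the equality $|\omega_W|(T(B)) = |\det T| \cdot |\omega_V|(B)$. Unpacking the definition of pushforward, $(T_* |\omega_V|)(A) = |\omega_V|(T^{-1} A)$, and setting $A = T(B)$, this rearranges to $|\omega_W|(A) = |\det T| \cdot (T_* |\omega_V|)(A)$ for all measurable $A \subseteq W$, which is the desired identity.

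There is no substantive obstacle: the lemma is a textbook multilinear-algebra fact recorded for convenient reference in the subsequent arguments. The only mild care required is distinguishing pullback from pushforward conventions when passing from forms to measures, and checking the sign conventions implicit in the appearance of the absolute value $|\det T|$ rather than $\det T$.
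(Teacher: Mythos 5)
Your argument is correct. The paper does not actually supply a proof of this lemma—it is introduced with ``We recall the following standard fact from linear algebra''—so there is nothing to compare against; your evaluation of $T^*\omega_W$ on the basis $(e_1,\ldots,e_N)$ together with the linear change-of-variables formula is exactly the standard argument one would write down, and the bookkeeping between pullback of forms and pushforward of measures is handled correctly.
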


For $f \in \Mod_{g,1}$, there is an action $f^*: Z^1_\chi \ra Z^1_{f^* \chi}$ given by sending $\lambda \in Z^1_\chi$ to $f^* \lambda := \lambda \circ f$.  This induces a map $f_*: \bigwedge^{2g-1} (Z^1_\chi)^* \ra \bigwedge^{2g-1} (Z^1_{(f^{-1})^*\chi})^*$. For simplicity, define $f_* \chi := (f^{-1})^*\chi$. Define the cocycle $A(f, \chi)$ to be the function 
\[
A(f, \cdot): H^1(\Sigma_g, K^\times)^\circ \ra K^\times
\]
so that
\[ f_* \omega_\chi = A(f, \chi) \omega_{f_* \chi}\]
This is a \emph{cocycle} in the sense that 
\[
A(fg, \chi) = A(f, g_*\chi)A(g, \chi).
\]

The restriction of the cocycle to a subgroup $G \leq \mathrm{Mod}_{g,1}$ is a \emph{coboundary (in the sense of dynamics)} if there is a function $F: H^1(\Sigma_g, K^\times)^\circ \ra K^\times$ so that $A(f, \chi) = \frac{F(f_*\chi)}{F(\chi)}$ for $f \in G$. When this occurs, $F(\chi)\omega_\chi$ is a nowhere vanishing $G$-equivariant section of $\bigwedge^{2g-1} \cV_K^*$.

Set $(c_1, \hdots, c_{2g}) = (a_1, b_1, \hdots, a_g, b_g)$. On $U_{c_{2g}} = U_{b_g}$, we have a basis of $Z^1_\chi$ given by cocycles $\lambda_i$ defined by the condition that $\lambda_i(c_j) = \delta_{ij}$ for $1 \leq i, j \leq 2g-1$. Define $\gamma_i: Z^1_\chi \ra K$ by $\gamma_i(\lambda) = \lambda(c_i)$. Recall that $H^1(\Sigma_g, K^\times) = (K^\times)^{2g} \subseteq K^{2g}$; in this way the notion of a rational function is defined on $H^1(\Sigma_g, K^\times)$ (or a subspace).

\begin{lem}\label{L:CocycleIsRational}
$A(f, \cdot): H^1(\Sigma_g, K^\times)^\circ \ra K^\times$ is a rational function. For $\chi \in H^1(\Sigma_g, K^\times)^\circ$ so that $\chi$ and $f_* \chi \in U_{b_{g}}$, we have (recalling $x_i := \chi(a_i)$ and $y_i := \chi(b_i)$),
\[ 
A(f, \chi) = \frac{f_*\chi(a_g)-1}{\chi(a_g)-1} \det\left( \lambda_i(f(c_j)) \right)_{i,j = 1}^{2g-1} \in \bQ(x_1, y_1, \hdots, x_{g}, y_{g}).  
\]
\end{lem}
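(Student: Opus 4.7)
The plan is to derive the formula for $A(f,\chi)$ explicitly in the $U_{b_g}$-trivialization on both source and target (which is valid since both $\chi,f_*\chi \in U_{b_g}$ by hypothesis), and then argue rationality entry-by-entry. Recall that $\omega_{b_g,\chi} = (1-\chi(a_g))^{-1}\gamma_1^\chi \wedge \dots \wedge \gamma_{2g-1}^\chi$, where $\gamma_i^\chi \in (Z^1_\chi)^*$ is the evaluation functional $\lambda \mapsto \lambda(c_i)$; the dual basis $(\lambda_i^\chi)$ is characterized by $\lambda_i^\chi(c_j)=\delta_{ij}$ for $j<2g$, with $\lambda_i^\chi(b_g)$ determined by \eqref{eqn:twistedcocyclecondition2}. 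The same conventions apply over $f_*\chi$.

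First, I would apply Lemma \ref{L:LinearAlgebra} to the linear isomorphism $(f^{-1})^*\colon Z^1_\chi \to Z^1_{f_*\chi}$ whose inverse is $f^*$. By definition of pushforward of top forms, $(f_*\omega_\chi)(v_1,\dots,v_{2g-1}) = \omega_\chi(f^*v_1,\dots,f^*v_{2g-1})$ for any $v_k \in Z^1_{f_*\chi}$. Evaluating the identity $f_*\omega_\chi = A(f,\chi)\omega_{f_*\chi}$ on the basis $(\lambda_k^{f_*\chi})_{k=1}^{2g-1}$ and using $\gamma_i^\chi(f^*\lambda_k^{f_*\chi}) = \lambda_k^{f_*\chi}(f(c_i))$ gives
\[
A(f,\chi) \cdot (1-f_*\chi(a_g))^{-1} = (1-\chi(a_g))^{-1}\det\!\bigl[\lambda_j^{f_*\chi}(f(c_i))\bigr]_{i,j=1}^{2g-1},
\]
which rearranges to the stated closed form.

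For rationality, two observations suffice. Since $\chi$ is multiplicative and $\{a_i,b_i\}$ generate $\pi_1$, the value $\chi(w)$ is a Laurent monomial in $x_i,y_i$ for any word $w \in \pi_1$; in particular $f_*\chi(a_g) = \chi(f^{-1}(a_g))$ is rational, hence so is the prefactor. For the determinant entries, iterated use of the cocycle identity $\lambda(\alpha\beta) = \lambda(\alpha) + \chi(\alpha)\lambda(\beta)$ (and its inverse form $\lambda(\alpha^{-1}) = -\chi(\alpha)^{-1}\lambda(\alpha)$) expresses $\lambda(w)$ for any word $w$ as a linear combination of the generator values $\lambda(c_k)$ with coefficients that are Laurent monomials in the $\chi(c_k)$. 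Specializing to $\lambda = \lambda_i^{f_*\chi}$ with character $f_*\chi$: the values $\lambda_i^{f_*\chi}(c_k) = \delta_{ik}$ for $k<2g$ are constants, while $\lambda_i^{f_*\chi}(b_g)$ is rational in the $f_*\chi(c_k)$'s by solving \eqref{eqn:twistedcocyclecondition2}, with denominator $1-f_*\chi(a_g)$. Thus each entry, and hence $A(f,\chi)$, belongs to $\bQ(x_1,y_1,\dots,x_g,y_g)$.

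The main obstacle is essentially organizational: tracking how the $\chi$-dependent basis $\lambda_i^{f_*\chi}$ varies with $\chi$ through the cocycle equation and confirming the pushforward-of-forms calculation has the sign conventions one expects. Once the explicit formula is written down via Lemma \ref{L:LinearAlgebra}, rationality follows purely from elementary manipulation of cocycles on finitely many generators; no genuine analytic input is needed.
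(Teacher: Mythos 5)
Your proposal is correct and follows essentially the same route as the paper: both derive the closed form by computing the matrix of $f^*\colon Z^1_{f_*\chi}\to Z^1_\chi$ in the $U_{b_g}$-coordinates $\lambda\mapsto(\lambda(c_i))$ and invoking Lemma \ref{L:LinearAlgebra}, then deduce rationality from the formula. Your explicit unwinding of why the determinant entries are rational (iterating the cocycle identity over words in the generators, with $\lambda_i^{f_*\chi}(b_g)$ recovered from \eqref{eqn:twistedcocyclecondition2}) is a correct elaboration of a step the paper dispatches with ``the first claim follows from the second.''
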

\begin{proof}
The first claim follows from the second, which we now show. Consider the following commutative diagram,
\begin{center}
\begin{tikzcd}
Z^1_{f_*\chi} \arrow[r, "f^*"] \arrow[d, "\lambda \ra (\lambda(c_i))"]
& Z^1_{\chi} \arrow[d, "\lambda \ra (\lambda(c_i))"] \\
K^{2g-1} \arrow{r}[swap]{(\lambda_i(f(c_j)))_{ij}}
& K^{2g-1}
\end{tikzcd}
\end{center}
The basis $(\lambda_i)_{i=1}^{2g-1}$ of $Z^1$ is dual to the basis $(\gamma_i)_{i=1}^{2g-1}$ of $(Z^1)^*$ where we have suppressed the dependence on $\chi \in H^1(\Sigma_g, K^\times)^\circ$. By definition, $\omega_\chi = (1-\chi(a_g))^{-1} \gamma_1 \wedge \hdots \wedge \gamma_{2g-1}$. Set $T = f^*$. Written as a matrix with respect to the bases $(\lambda_i)_{i=1}^{2g-1}$ this is $(\lambda_i(f(c_j)))_{ij}$. By Lemma \ref{L:LinearAlgebra}, 
\[ (1-\chi(a_g)) A(f, \chi) \omega_{f_* \chi} = T^*\left( 1-\chi(a_g) \right) \omega_\chi = (\det T) (1-f_*\chi(a_g)) \omega_{f_* \chi}, \]
from which the result follows.
\end{proof}


Let $R$ be an abelian group, written multiplicatively, and let $C: \Mod_{g,1} \times H^1(\Sigma_g; R) \ra R$ be a cocycle, i.e. a map satisfying $C(fg, \chi) = C(f, g_* \chi)C(g, \chi)$.
The cocycle is \emph{multiplicative (in the second factor)} if $C(f, \chi_1\chi_2) = C(f,\chi_1)C(f, \chi_2)$. For such cocycles, $C(f, \cdot)$ is an element of 
\[
\Hom(H^1(\Sigma_g, R), R) \cong H_1(\Sigma_g, R).
\] 
This determines a map $C: \Mod_{g,1} \ra H_1(\Sigma_g, R)$ that satisfies
\[ C(fg, \cdot) = C(f, g_*\cdot)C(g, \cdot) = g^*\left( C(f, \cdot) \right) C(g, \cdot)  \]
 where $g^* := (g^{-1})_*$. Define $D(f, \chi) := C(f^{-1}, \chi)$. Then 
\[
D(fg, \cdot) = C(g^{-1}f^{-1}, \cdot) = (f^{-1})^*\left( C(g^{-1}, \cdot) \right) C(f^{-1}, \cdot) = f_*(D(g, \cdot)) D(f, \cdot)
\]
This is a crossed homomorphism, i.e. an element of $Z^1(\Mod_{g,1}, H_1(\Sigma_g, R))$. If $D$ is a coboundary in the sense of group cohomology, then there is a homology class $v$ so that $D(f, \cdot) = (f_*v)(v)^{-1}$ and so 
\[ C(f, \chi) = D(f^{-1}, \chi) = \frac{\chi(f^* v)}{\chi(v)} = \frac{(f_* \chi)(v)}{\chi(v)} \]
and so $C$ is a coboundary in the sense of dynamics.

We now specialize to the case $K = \bC$.

\begin{lem}\label{L:NullstellensatzArgument}
Fix $d \in \bZ_{>0}$. If $f: \bC^d \ra \bC \cup \{ \infty \}$ is a rational function whose only zeros and poles occur on $\bC^d - (\bC^\times)^d$, then $f$ is a monomial. 
\end{lem}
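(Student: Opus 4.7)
The plan is to reduce the statement to one about polynomials, then combine Hilbert's Nullstellensatz with unique factorization in $\bC[x_1,\ldots,x_d]$. Write $f = p/q$ in lowest terms, so that $p, q \in \bC[x_1,\ldots,x_d]$ are coprime. The hypothesis says that the zero locus $V(p)\setminus V(q)$ and the pole locus $V(q)\setminus V(p)$ of $f$ are both contained in $V(x_1\cdots x_d)$.

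I first claim that in fact $V(p) \subseteq V(x_1\cdots x_d)$ (and symmetrically for $q$). Factor $p = \prod_i p_i^{e_i}$ into distinct irreducibles $p_i$. Since $p$ and $q$ are coprime, no $p_i$ divides $q$, so no irreducible component $V(p_i)$ of $V(p)$ lies inside $V(q)$. Because $V(p_i)$ is irreducible, $V(p_i)\cap V(q)$ is then a proper closed subset, hence $V(p_i)\setminus V(q)$ is Zariski-dense in $V(p_i)$. Since this dense subset lies in $V(x_1\cdots x_d)$, we conclude $V(p_i)\subseteq V(x_1\cdots x_d)$, and thus $V(p)\subseteq V(x_1\cdots x_d)$.

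Now invoke the Nullstellensatz: from $V(p_i)\subseteq V(x_1\cdots x_d)$ we get $x_1\cdots x_d \in \sqrt{(p_i)} = (p_i)$, using that $p_i$ is irreducible and hence prime. Therefore $p_i$ divides $x_1\cdots x_d$, forcing $p_i$ to be a scalar multiple of some coordinate $x_j$. So $p$ is a monomial up to a nonzero scalar. The identical argument applied to $q$ shows $q$ is a monomial, and therefore $f = p/q$ is a Laurent monomial in $x_1,\ldots,x_d$, as desired.

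The only subtle step is the reduction from the given information about zeros and poles of $f$ to the full inclusions $V(p),V(q)\subseteq V(x_1\cdots x_d)$, since a priori one only controls $V(p)\setminus V(q)$ and $V(q)\setminus V(p)$. The coprimality of $p$ and $q$ is exactly what eliminates this gap, and everything else is a direct consequence of the Nullstellensatz and unique factorization.
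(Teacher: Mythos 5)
Your proof is correct and follows essentially the same route as the paper: reduce to the numerator and denominator in lowest terms, apply the Nullstellensatz to conclude each irreducible factor divides $x_1\cdots x_d$, and conclude via unique factorization. The one difference is that you carefully justify the passage from ``zeros and poles of $f$ lie in $V(x_1\cdots x_d)$'' to ``$V(p),V(q)\subseteq V(x_1\cdots x_d)$'' via the density of $V(p_i)\setminus V(q)$ in $V(p_i)$, a point the paper's proof asserts without comment; this is a welcome refinement rather than a different argument.
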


\begin{proof}
Write $f = \frac{g}{h}$ where $g$ and $h$ are polynomials with no irreducible factors in common. By hypothesis, $g$ and $h$ only vanish when some $x_i$ vanishes. Let $V(g)$ be the vanishing set of $g$. Since $V(g)$ is contained in the union of coordinate planes, the Nullstellensatz implies that the radical of $(g)$ contains $(x_1 \cdot \hdots \cdot x_d)$, which is the ideal of functions in $\bC[x_1, \hdots, x_d]$ vanishing on the coordinate axes. The radical of $g$ is the principal ideal $(G)$, which is generated by the product $G$ of distinct monic irreducible polynomials that divide $g$. Since $G \mid x_1 \hdots x_d$, it follows that the only monic irreducible polynomials dividing $g$ belong to the set $\{x_1, \hdots, x_d\}$. The same argument show that the same holds for $h$ and hence the result follows. 
\end{proof}

\begin{cor}\label{C:PrelimCocycleResult}
For $g\ge 2$, $A(f, \chi)$ is a multiplicative cocycle:
\[
A(\cdot, \cdot) \in Z^1(\Mod_{g,1},H^1(\Sigma_{g,1}; \bC^\times)^*).
\]
\end{cor}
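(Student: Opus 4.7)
The cocycle identity $A(fg,\chi) = A(f, g_*\chi)\,A(g, \chi)$ was verified when $A$ was introduced, so the new content of the corollary is multiplicativity in the second argument: $A(f,\chi_1\chi_2) = A(f,\chi_1)\,A(f,\chi_2)$. The plan is to show that, for each fixed $f \in \Mod_{g,1}$, $A(f,\cdot)$ is a Laurent monomial in the coordinates $x_i = \chi(a_i),\, y_i = \chi(b_i)$ on $H^1(\Sigma_g;\bC^\times) = (\bC^\times)^{2g}$, and then invoke $(\chi_1\chi_2)(a_i) = \chi_1(a_i)\,\chi_2(a_i)$ to conclude.

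First, Lemma~\ref{L:CocycleIsRational} exhibits $A(f,\cdot)$ as an element of $\bQ(x_1,y_1,\dots,x_g,y_g)$, and hence as a rational function on $\bC^{2g}$. The main step is to show that $A(f,\cdot)$ is a unit on the open torus $(\bC^\times)^{2g}$. By Lemma~\ref{L:TrivialDeterminantBundle}, $\omega$ is a nowhere-vanishing section of $\bigwedge^{2g-1}\cV_\bC^*$ on $H^1(\Sigma_g;\bC^\times)^\circ$; since $f_*: Z^1_\chi \to Z^1_{f_*\chi}$ is a linear isomorphism, $f_*\omega_\chi$ is a nonzero multiple of $\omega_{f_*\chi}$ for every $\chi \in H^1(\Sigma_g;\bC^\times)^\circ$, so $A(f,\chi) \in \bC^\times$ on all of $H^1(\Sigma_g;\bC^\times)^\circ = (\bC^\times)^{2g} \setminus \{\mathbf{1}\}$. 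The hypothesis $g \geq 2$ gives $2g \geq 4$, so the deleted trivial character has codimension $\geq 2$ in $(\bC^\times)^{2g}$; since the zero and pole divisors of a rational function are pure of codimension one, no such divisor can be contained in $\{\mathbf{1}\}$, and hence $A(f,\cdot)$ has no zeros or poles anywhere on $(\bC^\times)^{2g}$.

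Applying Lemma~\ref{L:NullstellensatzArgument} with $d = 2g$ then forces
\[
A(f,\chi) = \prod_{i=1}^g x_i^{m_i(f)}\, y_i^{n_i(f)}
\]
for some integers $m_i(f),n_i(f)$. Such a monomial is multiplicative in $\chi$, which combined with the already-established cocycle identity yields $A \in Z^1(\Mod_{g,1}, H^1(\Sigma_{g,1};\bC^\times)^*)$. The main obstacle is the codimension argument bridging the geometric non-vanishing of $\omega$ to the algebraic conclusion that the zero and pole divisors of $A(f,\cdot)$ are supported on coordinate hyperplanes; once this is in place, multiplicativity is automatic from the structure of Laurent monomials.
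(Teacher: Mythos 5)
Your reduction of the corollary to showing that $A(f,\cdot)$ is a Laurent monomial is the right idea, and your treatment of the deleted trivial character is fine (the codimension-$\geq 2$ argument is a clean substitute for the paper's appeal to the Weierstrass preparation theorem to rule out an isolated zero or pole at $\mathbf{1}$). But there is a genuine gap at the last step: Lemma \ref{L:NullstellensatzArgument} does not force $A(f,\chi) = \prod_i x_i^{m_i(f)} y_i^{n_i(f)}$; it only forces
\[
A(f,\chi) = c_f \prod_{i=1}^g x_i^{m_i(f)}\, y_i^{n_i(f)}
\]
for some constant $c_f \in \bC^\times$. Its proof merely constrains the irreducible factors of the numerator and denominator to lie in $\{x_1,\dots,x_d\}$, which leaves an undetermined scalar --- the function $2x$ is a unit on the torus but is not multiplicative. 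Multiplicativity of $A(f,\cdot)$ is equivalent to $c_f = c_f^2$, i.e.\ $c_f = 1$, and this is precisely the nontrivial content your argument silently drops.

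The paper closes this gap as follows. Evaluating the cocycle identity $A(fg,\chi) = A(f,g_*\chi)A(g,\chi)$ at the trivial character (as an identity of rational functions, using $g_*\mathbf{1}=\mathbf{1}$) shows that $f \mapsto c_f$ is a homomorphism $\Mod_{g,1} \to \bC^\times$; since $A(f,\cdot)$ is defined over $\bQ$ by Lemma \ref{L:CocycleIsRational}, in fact $c_f \in \bQ^\times$. The abelianization of $\Mod_{g,1}$ is trivial for $g \geq 3$ and finite for $g = 2$, and the torsion subgroup of $\bQ^\times$ is $\{\pm 1\}$, so $c_f = 1$ for $g \geq 3$ and $c_f = \pm 1$ for $g = 2$; the residual sign in genus $2$ is eliminated by computing $A(T_{a_2},\chi)=1$ directly from Lemma \ref{L:CocycleIsRational} and noting that $T_{a_2}$ generates the abelianization. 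Some argument of this kind is unavoidable, so your proof is incomplete as written.
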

\begin{proof}
By Lemma \ref{L:CocycleIsRational} $A(f, \chi)$ is a rational function defined over $\bQ$. So we view it as a function $A(f, \cdot): \bC^{2g-1} \ra \bC \cup \{\infty\}$. Its only zeros and poles occur on $\left( \bC^{2g-1} - (\bC^\times)^{2g-1} \right) \cup \{ (1, \hdots, 1) \}$. By the Weierstrass preparation theorem, multivariable rational functions have no isolated zeros or poles, so $(1, \hdots, 1)$ is not a zero or pole. So by Lemma \ref{L:NullstellensatzArgument}, $A(f, \chi) = c_f \prod_{i=1}^{g} x_i^{n_{i,f}} y_i^{m_{i,f}}$ where $c_f \in \bC^\times$ and $n_{i,f}$ and $m_{i,f}$ are integers. The cocycle condition implies that the map $\Mod_{g,1} \ra \bC^\times$ sending $f$ to $c_f$ is a homomorphism, e.g. by evaluating it at $\chi = \mathbf{1}$. Since $A$ is defined over $\bQ$, it follows that $c_f \in \bQ^\times$. The torsion subgroup of $\bQ^\times$ is $\{\pm 1\}$. Since the abelianization of $\mathrm{Mod}_{g,1}$ is trivial for $g \geq 3$ and torsion for $g \geq 2$, $c_f = 1$ for $g \geq 3$ and $c_f = \pm 1$ for $g = 2$. 

 When $g = 2$, we note that if $f$ is the Dehn twist around $a_2$, then $A(f, \chi) = 1$ by Lemma \ref{L:CocycleIsRational}. It follows that $c_f = 1$. Since $f$ projects to a generator of the abelianization of $\mathrm{Mod}_{g, 1}$, the argument is complete.
 \end{proof}

Recall that the kernel of the forgetful map $\Mod(\Sigma_{g,1}) \to \Mod(\Sigma_g)$ is isomorphic to $\pi_1\Sigma_g$ (based at the distinguished point of $\Sigma_{g,1}$, viewed as a marked point as opposed to a puncture). An element $\gamma \in \pi_1\Sigma_g$ is realized as a mapping class on $\Sigma_{g,1}$ by ``pushing'' the marked point along the path specified by $\gamma$; such mapping classes are called {\em point push maps}.

\begin{lem}\label{L:PointPush}
When $f$ is a point push around a closed curve $\gamma$, $A(f, \chi) = \chi(\gamma)^{2g-2}$.
\end{lem}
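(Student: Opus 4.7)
The plan is to apply the explicit formula for $A(f, \chi)$ in Lemma~\ref{L:CocycleIsRational}, which reduces the problem to evaluating a $(2g-1) \times (2g-1)$ determinant. The key observation will be that this determinant has the structure of a rank-one perturbation of a scalar matrix, so the matrix-determinant lemma does the bulk of the work.

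First I would recall the standard description of the point push action on $\pi_1 := \pi_1(\Sigma_g, p)$: for $\gamma \in \pi_1$, the mapping class $f := P(\gamma)$ acts on $\pi_1$ by the inner automorphism $f(\alpha) = \gamma \alpha \gamma^{-1}$. In particular $f$ acts trivially on $H_1(\Sigma_g; \bZ)$, so $f_* \chi = \chi$ for every character $\chi$. The prefactor in Lemma~\ref{L:CocycleIsRational} thus equals $1$ on the Zariski-open set $U_{b_g}$, leaving
\[
A(f, \chi) = \det\bigl(\lambda_i(\gamma c_j \gamma^{-1})\bigr)_{i,j=1}^{2g-1}.
\]
A short manipulation using the cocycle identity (plus $\lambda(\gamma^{-1}) = -\chi(\gamma)^{-1}\lambda(\gamma)$) gives
\[
\lambda_i(\gamma c_j \gamma^{-1}) = \chi(\gamma)\,\delta_{ij} + \lambda_i(\gamma)\bigl(1 - \chi(c_j)\bigr),
\]
so the relevant matrix takes the form $M = \chi(\gamma)\, I_{2g-1} + u v^T$ with $u_i = \lambda_i(\gamma)$ and $v_j = 1 - \chi(c_j)$. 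The matrix-determinant lemma then gives $\det M = \chi(\gamma)^{2g-1} + \chi(\gamma)^{2g-2}(v^T u)$.

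The crucial step is to identify $v^T u = 1 - \chi(\gamma)$. I would do this by recognizing that the coboundary $\delta_1 \in Z^1_\chi$, defined by $\delta_1(c) = 1 - \chi(c)$, has coordinates $\delta_1(c_i) = 1 - \chi(c_i)$ in the basis $(\lambda_i)_{i=1}^{2g-1}$. Since any cocycle in $Z^1_\chi$ is determined by its values on $c_1, \dots, c_{2g-1}$, one has $\delta_1 = \sum_i (1 - \chi(c_i)) \lambda_i$, and evaluating at $\gamma$ yields $v^T u = \delta_1(\gamma) = 1 - \chi(\gamma)$. Substituting gives
\[
A(f, \chi) = \chi(\gamma)^{2g-1} + \chi(\gamma)^{2g-2}\bigl(1 - \chi(\gamma)\bigr) = \chi(\gamma)^{2g-2}
\]
on $U_{b_g}$, and the equality extends to all of $H^1(\Sigma_g, K^\times)^\circ$ by the rationality of $A(f, \cdot)$ from Lemma~\ref{L:CocycleIsRational}. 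There is no serious obstacle here: once one has the correct action formula for point pushes and spots the rank-one perturbation structure, the only moment of cleverness is recognizing the sum $\sum_i (1 - \chi(c_i))\lambda_i(\gamma)$ as an evaluation of the coboundary $\delta_1$ at $\gamma$.
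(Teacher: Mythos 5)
Your proof is correct, and it reaches the formula by a somewhat different route than the paper. Both arguments start from Lemma \ref{L:CocycleIsRational} and must evaluate $\det\left(\lambda_i(\gamma c_j \gamma^{-1})\right)_{i,j=1}^{2g-1}$ with trivial prefactor (since point pushes act trivially on homology). The paper first observes that, because $f_*\chi = \chi$ on the point-pushing subgroup, the restriction of $A(\cdot,\chi)$ to that subgroup is a homomorphism to $\bC^\times$; since $\gamma \mapsto \chi(\gamma)^{2g-2}$ is also a homomorphism, it suffices to check the identity on the generators $c_k$, for which the matrix has nonzero entries only on the diagonal and in row $k$, so the determinant is the product $1 \cdot \chi(c_k)^{2g-2}$ of diagonal entries. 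You instead treat an arbitrary $\gamma$ at once: the same cocycle manipulation exhibits the matrix as the rank-one perturbation $\chi(\gamma) I + uv^T$, and the matrix-determinant lemma reduces everything to $v^Tu = \sum_i (1-\chi(c_i))\lambda_i(\gamma)$, which you correctly identify as the evaluation at $\gamma$ of the coboundary $c \mapsto 1-\chi(c)$ (legitimate on $U_{b_g}$, where cocycles are determined by their values on $c_1,\dots,c_{2g-1}$ by Lemma \ref{L:StandardBasisOfCocycles}), giving $1-\chi(\gamma)$. Your version buys a uniform computation that never invokes the homomorphism property and makes visible \emph{why} the exponent drops from $2g-1$ to $2g-2$ (the correction term is exactly a coboundary evaluation); the paper's version buys a shorter determinant computation at the cost of the reduction-to-generators step. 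Both are complete; the extension from $U_{b_g}$ to all of $H^1(\Sigma_g;K^\times)^\circ$ by rationality is handled the same way in each.
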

\begin{proof}
As the action of $f$ on $\pi_1(\Sigma_g,p)$ is by conjugation, it acts trivially on homology, and so $f^*\chi = \chi$ for any character $\chi$. Therefore, for each character $\chi$, the restriction of $A(\cdot, \chi)$ to the point pushing subgroup determines a homomorphism from $\pi_1(\Sigma_g, p)$ to $\bC^\times$. Thus it suffices to prove the claim for point pushes around basic curves $c_k$ (recalling our convention that $(c_1, \dots, c_{2g}) = (a_1,b_1, \dots, a_g,b_g)$). For $1 \leq i, j \leq 2g $,
\begin{align*}
    A_{ij} := \lambda_i(c_kc_jc_k^{-1}) =& \delta_{ik} + \chi(c_k) \delta_{ ij } - \chi(c_j) \delta_{ik}\\ 
    =& (1-\chi(c_j)) \delta_{ik} + \chi(c_k) \delta_{ ij }. 
\end{align*}

The matrix $(A_{ij})$ thus has nonzero entries only in row $k$ and down the diagonal, so its determinant is the product of its diagonal entries. Note that $A_{kk} = 1$ and that $A_{ii} = \chi(c_k)$ for $i\ne k$. The claim now follows from Lemma \ref{L:CocycleIsRational}.  
\end{proof}

Corollary \ref{C:PrelimCocycleResult} establishes that the restriction of $A$ to a map $\mathrm{Mod}_{g,1} \times H^1(\Sigma_{g,1}, \bR_{>0}) \ra \bR_{>0}$ determines a cocycle in $Z^1(\Mod_{g,1}, H^1(\Sigma_{g,1}, \bR_{>0})^*)$ when $g \geq 3$. In the Appendix, we take up the question of describing this space of cocycles and the associated cohomology group. In the present setting of a single puncture, the necessary analysis was carried out by Morita \cite{Morita-FamiliesOfJacobians}, who computed $H^1(\Mod_{g,1}; H^1(\Sigma_g))$, and Furuta and Trapp (\cite{morita-secondary,trapp}), who showed that the space is spanned by a {\em change of winding number functional}
\[
f \in \Mod_{g,1} \mapsto (f^*-1)W_\xi \in H^1(\Sigma_{g,1};\bZ),
\]
where $W_\xi \in H^1(UT\Sigma_{g,1};\bZ)$ is the class associated to a framing $\xi$ of $\Sigma_{g,1}$ as in Section \ref{subsection:wnf}. This leads to the following description of our cocycle $A$.

\begin{lem}\label{L:ItsACoboundary}
There is a framing $\xi$ so that 
\[
A(f, \chi) = \chi\left(PD((f^*-1)W_\xi) \right).
\]
Here, $(f^*-1)W_\xi$ lies in the subspace $H^1(\Sigma_{g,1}; \bR) \le H^1(UT \Sigma_{g,1}; \bR)$, and $PD: H^1(\Sigma_{g,1}; \bR) \to H_1(\Sigma_{g,1}; \bR)$ is the Poincar\'e duality isomorphism.
\end{lem}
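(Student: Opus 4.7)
The plan is to convert the multiplicative cocycle $A$ into an additive crossed homomorphism valued in $H := H_1(\Sigma_g; \bZ)$, identify its cohomology class using Morita's computation of $H^1(\Mod_{g,1}; H)$ together with the Furuta-Trapp description of the generator, and then promote this cohomological identification to a literal equality of cocycles by exploiting the affine structure on the space of framings.

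Concretely, by Corollary \ref{C:PrelimCocycleResult} the function $A(f, \cdot)$ is a monomial in the coordinates $x_i = \chi(a_i), y_i = \chi(b_i)$, so there is a unique class $v_f \in H$ with $A(f, \chi) = \chi(v_f)$, where $\chi(v)$ denotes the natural pairing $H^1(\Sigma_g; \bR_{>0}) \times H \to \bR_{>0}$ written multiplicatively. The discussion preceding Lemma \ref{L:NullstellensatzArgument} then shows that $w_f := v_{f^{-1}}$ represents a class in $Z^1(\Mod_{g,1}; H)$. Invoking Morita's theorem \cite{Morita-FamiliesOfJacobians} together with the Furuta-Trapp identification \cite{morita-secondary, trapp} of the generator as the change-of-winding-number cocycle $c_\xi: f \mapsto PD((f^* - 1)W_\xi)$, we conclude that $w - k\, c_\xi$ is a coboundary $f \mapsto (f_* - 1) u$ for some integer $k$ and some $u \in H$.

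To pin down $k$, I restrict to the point-pushing subgroup. Lemma \ref{L:PointPush} yields $v_f = (2g-2)[\gamma]$ for $f$ the point push about $\gamma$. A standard Chillingworth-type calculation, using that homological coherence (Equation \eqref{eqn:homcoh}) forces $W_\xi$ to take the value $2g-2$ on a small loop encircling the puncture, then shows that $c_\xi$ evaluates on the same point push to the same multiple of $[\gamma]$; reconciling signs and orientations gives $k = \pm 1$. Finally, by Proposition \ref{prop:framingchar} the set of framings of $\Sigma_{g,1}$ is a torsor over $H^1(\Sigma_{g,1}; \bZ)$, and changing $W_\xi$ to $W_\xi + p^* v'$ modifies $c_\xi$ by precisely the coboundary $f \mapsto (f_* - 1) PD(v')$. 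So the coboundary discrepancy between $w$ and $c_\xi$ can be absorbed by passing to a suitable new framing $\xi'$, producing the literal equality $A(f,\chi) = \chi(PD((f^* - 1)W_{\xi'}))$.

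The main obstacle is the point-push calculation: I have to verify with consistent sign conventions that the change-of-winding-number cocycle, restricted to point pushes, yields exactly $(2g-2)[\gamma]$ rather than some other multiple, and that the Chillingworth-type identity $(f^*-1)W_\xi \in H^1(\Sigma_{g,1};\bR) \le H^1(UT\Sigma_{g,1};\bR)$ is the one being matched to our $w_f$. Once this comparison is in hand, the remaining steps are essentially formal, and the fact that adjusting the framing exactly cancels coboundaries in group cohomology is the conceptual reason the statement asserts the \emph{existence} of a framing rather than a formula valid for every framing.
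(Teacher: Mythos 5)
Your proposal is correct and follows essentially the same route as the paper: both arguments rest on the Morita/Furuta--Trapp computation (the $n=0$ case of Theorem \ref{T:MainMCGCohomology}), pin down the normalization on the fiber class via the point-push computation of Lemma \ref{L:PointPush}, and absorb the remaining coboundary by re-choosing the framing using the affine structure of Proposition \ref{prop:framingchar}. One correction to the step you flagged as the crux: homological coherence gives $W_\xi(\delta) = 2g-1$ for the loop $\delta$ encircling the puncture, not $2g-2$; the quantity $2g-2$ that must match Lemma \ref{L:PointPush} is $W_\xi(\delta)-1$, namely the difference of the winding numbers of the two curves twisted in the point push, the extra $-1$ coming from the Euler characteristic of the pair of pants they cobound with $\delta$.
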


\begin{proof}
Since $A(f, \chi)$ is rational it suffices to prove the claim when $\chi$ is restricted to be positive real valued. We will let $A$ be this restriction in the remainder of the proof.  By Corollary \ref{C:PrelimCocycleResult}, $A$ is a cocycle in $Z^1(\Mod_{g,1}, H^1(\Sigma_{g,1}, \bR_{>0})^*)$. Using the exponential isomorphism between $\bR$ and $\bR_{>0}$, $A$ can be identified with an additive cocycle in $Z^1(\Mod_{g,1}, H^1(\Sigma_{g,1}, \bR)^*)$. 

By Theorem \ref{T:MainMCGCohomology}, there is an element $W \in H^1(UT\Sigma_{g,1}; \bR)$ such that there is an equality of cocycles $A = \overline{C(W)}$. For a fixed $f \in \Mod_{g,1}$ and $\chi \in H^1(\Sigma_{g,1}; \bR)$, we can compute $A(f, \chi) = \overline{C(W)}(f)(\chi)$ as follows: $(f^*-1)W$ lies in the subspace $H^1(\Sigma_{g,1}; \bR) \le H^1(UT \Sigma_{g,1}; \bR)$, and by Poincar\'e duality can be identified with $PD((f^*-1)W) \in H_1(\Sigma_{g,1}; \bR)$. Then
\[
A(f, \chi) = \chi(PD((f^*-1)W)).
\]

Recalling Proposition \ref{prop:framingchar}, the assertion that $W$ arises from a framing (i.e. $W = W_\xi$ for some framing $\xi$) then amounts to seeing that, one, $W$ is integrally-valued and two, that $W(\zeta) = 1$, where $\zeta$ is a positively-oriented fiber of the $S^1$-bundle $UT\Sigma_{g,1}$. The second condition is equivalent to specifying that the winding number of the positively oriented loop $\delta$ around the marked point of $\Sigma_{g,1}$ has winding number $2g-1$. 

Via the Johnson lift $\gamma \mapsto \hat \gamma$ (as defined in Section \ref{subsection:wnf}), for any simple closed curve $\gamma$, there is a well-defined winding number $W(\gamma) = W(\hat{\gamma}) \in \bR$. Moreover, if $T_\gamma$ is a Dehn twist about $\gamma$, then $T_\gamma^*W - W$ is contained in the subspace $H^1(\Sigma_{g,1}; \bR) \le H^1(UT\Sigma_{g,1}; \bR)$, and is computed there, via twist-linearity (Equation \eqref{eqn:twistlin}), as 
\[
T_\gamma^*W - W = W(\hat\gamma) PD(\gamma).
\]
 Therefore, $A(T_\gamma, \chi) = \chi(W(\gamma)\gamma) = \chi(\gamma)^{W(\gamma)}$. It may be useful to recall that $\chi \in H^1(\Sigma_{g,1}, \bR_{>0}) \cong H^1(\Sigma_{g,1}, \bR)$ and hence can be viewed, after taking a logarithm, as an $\bR$-linear homomorphism from $H_1(\Sigma_{g,1}, \bR)$ to $\bR$. Since $A(T_\gamma, \chi)$ is a rational function, it follows that $W(\gamma)$ is integral for every simple closed curve $\gamma$ and hence $W \in H^1(UT\Sigma_{g,1}, \bZ)$. 
 
Suppose that $\gamma$ is a element of $\pi_1$ based at the marked point $p$ of $\Sigma_{g,1}$. Let $T$ be a tubular neighborhood of $\gamma$ with boundary components $\gamma_1$ and $\gamma_2$. Choose $\gamma_1$ so that an arc that starts on $\gamma_1$, crossing $\gamma$ once and ending on $\gamma_2$ will intersect $\gamma$ positively. A point push $f$ around $\gamma$ is then $f = T_{\gamma_2}^{-1} T_{\gamma_1}$. By twist linearity, 
\[ A(f, \chi) = \chi \left( -(T_{\gamma_1}^*W)(\widehat{\gamma_2})\gamma_2 + W(\widehat{\gamma_1})\gamma_1 \right) = \chi(\gamma)^{W(\widehat{\gamma_1}) - W(\widehat{\gamma_2})}. \]
By Lemma \ref{L:PointPush}, $W(\widehat{\gamma_1}) - W(\widehat{\gamma_2}) = 2g-2$. Since $W(\widehat{\gamma_1}) = W(\widehat{\gamma_2}) + W(\widehat{\delta}) - 1$, the result follows.
\end{proof}

\subsection{Proofs}
We proceed to the proof of the first half of Theorem \ref{T:Main:InvariantSection}, whose statement we recall:\\

\noindent \textbf{Theorem \ref{T:Main:InvariantSection} (First Half).} \emph{Fix $g \geq 3$. Let $\Gamma \leq \Mod_{g,1}$ project to a finite index subgroup of $\mathrm{Sp}(2g, \bZ)$ under the action on homology. Then $\bigwedge^{2g-1}{\cV_\bR}^*$ admits a $\Gamma$-equivariant non-vanishing section if and only if there is an $N$-framing $\xi$, for some positive integer $N$, so that $\Gamma \leq \Mod_{g,1}[\xi]$.}\\

\begin{proof}[Proof of Theorem \ref{T:Main:InvariantSection} (First Half):] 
Suppose first that $\Gamma$ preserves an $N$-framing $\xi$. By Lemma \ref{L:ItsACoboundary}, there is a framing $\eta$ so that $A(f, \chi) = \chi((f^*-1)W_\eta)$. There is some $v \in H^1(\Sigma_{g,1}, \bZ)$ so that $W_\xi = NW_\eta + v$. Thus
\[
A(f, \chi) = \chi((f^*-1)W_\eta) = \chi((f^*-1)\tfrac{1}{N}(W_\xi - v)) = \chi((f^*-1)\tfrac{-1}{N}v),
\]
exhibiting the restriction of $A$ to $\Gamma$ as a coboundary. This establishes the backwards implication. 

Suppose now that $\Gamma \leq \Mod_{g,1}$ acts on $H^1(\Sigma_g)$ by a finite-index subgroup $\Lambda$ of $\mathrm{Sp}(2g, \bZ)$. Suppose that there is a $\Gamma$-equivariant section of $\bigwedge^{2g-1}{\cV_\bR}^*$. Since the Torelli group $\cI_{g,1}$ preserves every point in the base space $H^1(\Sigma_g; \bR_{>0})^\circ$ and acts on the fiber over $\chi$ via multiplication by $A(f,\chi)$, it follows that the restriction of $A(f, \chi)$ to $\cI_{g,1}$ must be trivial.  We must show that $\Gamma$ belongs to an $N$-framed mapping class group for some $N$.

We will begin by showing that $A(f, \chi)$ is a coboundary when restricted to $\Gamma$. By the inflation-restriction sequence, the following sequence is exact:
\[ 0 \ra H^1(\Lambda, H_1(\Sigma_g, \bR_{>0})) \ra H^1(\Gamma, H_1(\Sigma_g, \bR_{>0})) \ra H^1(\cI_{g,1} \cap \Gamma, H_1(\Sigma_g, \bR_{>0}))^{\Lambda}.  \]
We will show that $H^1(\Lambda, H_1(\Sigma_g, \bR_{>0})) = 0$. By the Borel vanishing theorem \cite{Borel-Vanishing} (see also Tshishiku \cite{Tshishiku-Borel}),  since $\Lambda$ is finite index and since $g \geq 3$, $H^1(\Lambda, H_1(\Sigma_g; \bR)) = 0$ and this is isomorphic to $H^1(\Lambda, H_1(\Sigma_g, \bR_{>0}))$ using the exponential isomorphism between $\bR$ and $\bR_{>0}$.

Therefore, $A(f, \chi)$ is a coboundary when restricted to $\Gamma$ if and only if $A(f, \chi)$ is trivial for $f \in \Gamma \cap \cI_{g,1}$. As noted above, under our hypotheses, $A(f, \chi)$ is trivial when $f \in \Gamma \cap \cI_{g,1}$, and so $A(f, \chi)$ is a coboundary when restricted to $\Gamma$.

By Lemma \ref{L:ItsACoboundary}, there is a framing $\xi$ such that $A(f,\chi) = \chi(f^*W_\xi - W_\xi)$. 
Since the restriction of $A$ to $\Gamma$ is a multiplicative coboundary, there is a cohomology class $v \in H^1(\Sigma_{g,1}; \bR)$ so that 
\[
\chi(f^*(W_\xi+v)-(W_\xi+v)) = 0
\]
for all $\chi \in H^1(\Sigma_{g,1}, \bR_{>0})$ and all $f \in \Gamma$. Hence the class $W_\xi + v \in H^1(UT\Sigma_{g,1}; \bR)$ is $\Gamma$-invariant. It remains to show that $v$ is a rational cohomology class, since then an integer multiple of  $W_\xi + v$ will determine an integral $N$-framing for some positive integer $N$.

Since $W_\xi$ is integral, it follows that for the finite index subgroup $\Lambda$ of $\mathrm{Sp}(2g, \bZ)$ and for all $\gamma \in \Lambda$, $\gamma^*v - v \in H^1(\Sigma_{g,1}; \bZ)$. In particular, $\Lambda$ contains a $\gamma$ that has no eigenvalues equal to $1$, from which it follows that $(\gamma^* - I)$ is invertible and hence that $v \in H^1(\Sigma_{g,1}; \bQ)$ as desired. 
\end{proof}


We recall the following result, which is due to Ghazouani \cite[Proof of Theorem 5.1]{Ghazouani-MCGDynamicsAndHolonomy}.

\begin{thm}\label{T:JohnsonImage}
For generic $\chi \in H^1(\Sigma_{g,1}, \bR_{>0})$, the image of the Johnson kernel in $\mathrm{SL}\left( H^1(\Sigma_{g,1}, \bR_\chi) \right)$ is dense. 
\end{thm}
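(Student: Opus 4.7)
The plan is to first establish Zariski density of the image of $\cK_{g,1}$ in $\mathrm{SL}(H^1(\Sigma_{g,1}, \bR_\chi))$ for generic $\chi$, and then upgrade to Hausdorff density via the standard fact that a Zariski dense, non-discrete subgroup of a connected simple real Lie group of noncompact type is automatically dense in the analytic topology.

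First I would compute the action of a separating twist $T_\gamma$, one of the generators of $\cK_{g,1}$. Since $\cK_{g,1} \subseteq \cI_{g,1}$ fixes every character $\chi$, it genuinely acts on the $(2g-1)$-dimensional space $H^1(\Sigma_{g,1}, \bR_\chi)$. Decomposing $\Sigma_{g,1} = S_1 \cup_\gamma S_2$ with the puncture in $S_2$, the twist $T_\gamma$ acts on $\pi_1(\Sigma_{g,1}, p)$ as the identity on loops in $S_1$ and by $\gamma$-conjugation on loops in $S_2$. Because $\gamma$ is nullhomologous, $\chi(\gamma) = 1$, and the cocycle identity gives $T_\gamma^\ast \lambda - \lambda = \lambda(\gamma)\cdot \mu_\gamma$, where $\mu_\gamma \in Z^1_\chi$ is the cocycle vanishing on $S_1$-loops and equal to $(1-\chi)$ on $S_2$-loops. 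For generic $\chi$, $\mu_\gamma$ is not a coboundary (a coboundary vanishing on all $S_1$-loops must be zero when $\chi|_{S_1} \not\equiv 1$), so $T_\gamma^\ast$ acts on $H^1(\Sigma_{g,1}, \bR_\chi)$ as a genuine transvection with direction $[\mu_\gamma]$ and fixed hyperplane $\{\lambda : \lambda(\gamma) = 0\}$.

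Second, I would deduce Zariski density. The image of $\cK_{g,1}$ is generated by the transvections $T_\gamma^\ast$ as $\gamma$ varies over separating curves. By the classical fact that transvections in $\mathrm{SL}(V)$ with directions and fixed hyperplanes ranging over Zariski-dense subsets of $\bP(V)$ and its dual generate a Zariski-dense subgroup, it suffices to verify this Zariski-filling condition for the family $\{([\mu_\gamma], [\gamma])\}_\gamma$. Varying $\gamma$ over separating curves of different topological types (bounding subsurfaces of various genera) and using the transitivity of $\Mod_{g,1}$ on each topological type, the resulting directions and hyperplanes depend algebraically on $\chi$ and, for generic $\chi$, cannot all be confined to a proper Zariski-closed subset. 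This would give Zariski density of the image.

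The main obstacle is precisely this Zariski-filling claim in the second step, which is where the genuine topological-combinatorial content of the theorem lies: one must analyze which cocycle classes arise from separating curves and how their algebraic dependence on $\chi$ precludes their lying in a proper subvariety. Once this is secured, the upgrade to analytic density is routine: the image of $\cK_{g,1}$ contains infinitely many distinct transvections with mapping-class-group conjugates producing transvections in arbitrarily many directions, hence cannot be discrete, and any non-discrete Zariski dense subgroup of the connected simple real Lie group $\mathrm{SL}(H^1(\Sigma_{g,1}, \bR_\chi))$ (of dimension $\geq 3$ for $g \geq 2$) is Hausdorff dense.
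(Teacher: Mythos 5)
Your first step is sound and matches the computation underlying the result: for a separating curve $\gamma$ one has $\chi(\gamma)=1$, the twist acts on cocycles by $\lambda \mapsto \lambda + \lambda(\gamma)\mu_\gamma$, and for generic $\chi$ this descends to a nontrivial transvection of $H^1(\Sigma_{g,1},\bR_\chi)$. But the proposal has a genuine gap, and you have named it yourself: the ``Zariski-filling'' claim in your second step is asserted, not proved, and it is the entire content of the theorem. Saying that the directions $[\mu_\gamma]$ and the hyperplanes $\ker(\lambda\mapsto\lambda(\gamma))$ ``depend algebraically on $\chi$ and cannot all be confined to a proper Zariski-closed subset'' restates what must be shown; one has to actually produce enough separating curves so that the rank-one nilpotents $\mu_\gamma\otimes \mathrm{ev}_\gamma$ span $\mathfrak{sl}$ of the $(2g-1)$-dimensional fiber, which is a concrete topological construction, not a genericity remark. (The ``classical fact'' you invoke is also too weak as stated: Zariski density of the directions in $\bP(V)$ and of the hyperplanes in $\bP(V^*)$ separately does not control the span of the pairs $v\otimes\phi$.) Your passage from Zariski density to Hausdorff density has a second unjustified step: containing infinitely many transvections in many directions does not force non-discreteness --- $\mathrm{SL}_n(\bZ)$ is generated by such transvections and is discrete --- so non-discreteness for generic $\chi$ needs its own argument.

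For comparison, the paper does not attempt any of this: it observes that Ghazouani \cite{Ghazouani-MCGDynamicsAndHolonomy} already proves density of the image of $\cI_{g,1}$ in $\mathrm{PSL}\left(H^1(\Sigma_{g,1},\bR_\chi)\right)$ for generic $\chi$, and that his proof uses only separating twists, hence applies verbatim to $\cK_{g,1}$. The only new content in the paper's proof is the lift from $\mathrm{PSL}$ to $\mathrm{SL}$: by Proposition \ref{prop:kginframed} and Lemma \ref{L:ItsACoboundary} the determinant cocycle $A(\cdot,\chi)$ annihilates $\cK_{g,1}$, so the action genuinely lands in $\mathrm{SL}$, and density upgrades from $\mathrm{PSL}$ to $\mathrm{SL}$ because the covering is finite-to-one. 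If you want a self-contained proof along your lines, the missing combinatorial work in your step two is exactly what Ghazouani's argument supplies, and that is where your effort should go.
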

Using the isomorphism $H^1(\Sigma_{g,1}, \bR_{>0}) \cong H^1(\Sigma_{g,1}, \bR)$, ``generic" means belonging to the complement of the union of finitely many subspaces.
\begin{proof}
We will explain how to deduce this result from the work of Ghazouani. Ghazouani states this result for $g=2$ in Proposition 4.1 and in the proof of Theorem 5.2 for genus $g > 2$. Moreover, he only states the result for the image of $\cI_{g,1}$ in $\mathrm{PSL}\left( H^1(\Sigma_{g,1}, \bR_\chi) \right)$. However, his proof only uses Dehn twists about separating curves, which are elements of the Johnson kernel $\cK_{g,1}$. To move from $\mathrm{PSL}$ to $\mathrm{SL}$, we note that by Proposition \ref{prop:kginframed}, $\cK_{g,1}$ is contained in every framed mapping class group subgroup of $\Mod_{g,1}$. Thus by Lemma \ref{L:ItsACoboundary}, the function $A(\cdot, \chi)$ annihilates $\cK_{g,1}$ and hence the action of $\cK_{g,1}$ lifts to $\mathrm{SL}\left( H^1(\Sigma_{g,1}, \bR_\chi) \right)$. Therefore, it has dense image in $\mathrm{SL}$ since the map from $\mathrm{SL}$ to $\mathrm{PSL}$ is finite-to-one.
\end{proof}

We next recall and prove the second half of Theorem \ref{T:Main:InvariantSection}. Recall that a subgroup $\Gamma \le \Mod_{g,1}$ is {\em sufficiently large} 
if it contains the Johnson kernel $\cK_{g,1}$ and surjects onto a finite-index subgroup of $\mathrm{Sp}(2g,\bZ)$. Also recall the trivial line subbundle $L \subset \cV_\bR^+$ spanned by the section $\sigma$ given by
\[
\sigma(\chi)=  \chi\, z + (1 - \chi).
\]
Passing from $\cV_\bR^+$ to $\cV_\bR^+/L$ reduces the fiber over $\chi$ from $Z^1(\Sigma_{g,1}, \bR_\chi)$ to $H^1(\Sigma_{g,1}, \bR_\chi)$ and hence allows us to apply Theorem \ref{T:JohnsonImage}.

\noindent \textbf{Theorem \ref{T:Main:InvariantSection}  (Second Half). } {\em Fix $g \geq 3$.  If $\Gamma \leq \mathrm{Mod}_{g,1}$ is sufficiently large, then $\cV^+_\bR/L$ admits a $\Gamma$-invariant Lebesgue class measure if and only if $\Gamma$ is contained in an $N$-framed mapping class group for some $N$. When this occurs, the measure is unique up to scaling and is ergodic.}

\begin{proof}[Proof of Theorem \ref{T:Main:InvariantSection} (Second Half): ]
We will prove the first statement. The reverse direction follows from the first half of Theorem \ref{T:Main:InvariantSection}, which constructs an equivariant family of measures on the fiber of $\cV_\bR^+/L$, when $\Gamma$ is contained in an $N$-framed mapping class group. The resulting (fibered) product measure is invariant.

For the forward direction, suppose that $\Gamma$ is a subgroup of $\Mod_{g,1}$ that contains $\cK_{g,1}$, projects to a finite index subgroup of $\mathrm{Sp}(2g, \bZ)$, and that preserves a measure $\mu$ on $\cV_\bR^+/L$. Let $\nu$ be the measure defined by the symplectic form on $H^1(\Sigma_{g,1}, \bR_{>0}) \cong H^1(\Sigma_{g, 1}, \bR)$. By the disintegration theorem, there is a $\Gamma$-equivariant family of measures $\mu_\chi$ on $H^1(\Sigma_{g,1}, \bR_\chi)$ so that $\mu = \mu_\chi \otimes \nu$. Each measure is $\nu$-almost surely invariant under the action of $\cK_{g,1}$. By Theorem \ref{T:JohnsonImage} these measures are $\mathrm{SL}(H^1(\Sigma_{g,1}, \bR_\chi))$-invariant and hence multiples of Lebesgue measure on the fiber. The argument in the proof of the first half of Theorem \ref{T:Main:InvariantSection} completes the proof. 

Turning to ergodicity, suppose that $\Gamma$ is contained in an $N$-framed mapping class group. Let $\mu$ be the fully supported Lebesgue measure that is $\Gamma$-invariant. If $A$ is a measurable invariant set then $\mu' := 1_A \mu$ is another invariant measure where $1_A$ is the indicator function of $A$. Suppose that $\mu'$ is not the zero measure, i.e. suppose that $A$ is not null. Disintegrate as before into a $\Gamma$-equivariant family of measures $\mu_\chi'$. As before, each is a multiple of Lebesgue measure on the fibers. Since $\Gamma$ acts ergodically on $H^1(\Sigma_{g,1}, \bR)$, $\mu_\chi'$ is $\nu$-almost surely nonzero and hence $A$ is conull. 
\end{proof}

Finally, we recall and prove Theorem \ref{T:Main:InvariantForms}.

\noindent \textbf{Theorem \ref{T:Main:InvariantForms}. } {\em If $g \geq 3$ and $\Gamma \leq \mathrm{Mod}_{g,1}$ is sufficiently large, then $\mathrm{Hom}^\circ(\pi_1, \Dil)$ admits a $\Gamma$-invariant Lebesgue class measure if and only if $\Gamma$ is contained in an $N$-framed mapping class group for some $N$.}

\begin{proof}[Proof of Theorem \ref{T:Main:InvariantForms}:] The reverse direction follows from Theorem \ref{T:Main:InvariantSection}. For the forward direction, we use the decomposition $\mathrm{Hom}^\circ(\pi_1, \Dil) = (L \oplus(\cV_\bR^+/L)) \otimes \bC$. The action of the framed mapping class group on $L$ is trivial. Since an invariant Lebesgue class measure is equivalent to an equivariant volume form on the fibers, its existence is equivalent to the existence of an equivariant volume form on the fibers of $(\cV_\bR^+/L) \otimes \bC$. Let $\mu_\chi$ be this family of volume forms. Suppose that $V$ is a fiber of the bundle $\cV_\bR^+/L$. By Theorem \ref{T:JohnsonImage}, the Johnson kernel acts on $V$ by a dense subgroup of $\mathrm{SL}(V)$. When $\dim V > 2$, equivalently, $g > 2$, the diagonal action of $\mathrm{SL}(V)$ on $V \oplus V$ is transitive away from the null set of collinear pairs of vectors. This shows that $\mu_\chi$ is almost surely a multiple of Lebesgue measure on fibers. We now conclude as in the proof of Theorem \ref{T:Main:InvariantSection}.
\end{proof}

\color{black}

\section{Dilation surfaces}\label{S:DilationSurfaces}

\subsection{Basics} A \emph{dilation surface} structure on $\Sigma_{g,n}$ is an atlas of charts to $\bC$ with transition functions given by maps of the form $az+b$ where $a \in \bR_{>0}$ and $b \in \bC$  and so that, in a neighborhood of any puncture, the connection defined by pulling back the connection $d$ on $\bC$ under coordinate charts, determines a regular holomorphic connection after filling in the puncture. This final condition is equivalent to specifying local models for the dilation surface in neighborhoods of punctures. 

By its definition, a dilation surface $X$ carries a conformal structure. When speaking of dilation surfaces, $X$ will be a compact surface and $C$ will be the punctures, which are also called \emph{cone points}. Parallel transport produces a \emph{holonomy character} $\chi: \pi_1(X\setminus C) \ra \bR_{>0}$. When $\chi$ is trivial, $X$ is said to be a {\em translation surface}. Let $K_X$ be the canonical bundle and $L_\chi$ the flat line bundle over $X \setminus C$ defined by $\chi$. A dilation surface is equivalent to the data of $(X, C, \chi, \omega)$ where $X$ is a Riemann surface, $C$ a finite collection of points, $\chi$ a holonomy character, and $\omega$ a meromorphic section (defined up to scaling by positive real numbers) of $K_X \otimes L_\chi$ whose only zeros and poles occur at $C$ (see Apisa-Bainbridge-Wang \cite[Section 2]{ABW1}). This statement makes precise the claim that a dilation surface is equivalent to a twisted holomorphic $1$-form up to scaling. 

The holonomy of a positively-oriented loop around a puncture is called the \emph{local holonomy}. It may be written uniquely as $e^{-2\pi b}$ for $b \in \bR$. Write $2\pi |a|$ for the cone angle of a cone point. The \emph{signed cone angle}, written $2\pi a$, is then given by $a = |a|$ if the puncture is a zero and $a = -|a|$ if it is a pole. The \emph{complex cone angle} of the puncture $c$ is then $\kappa_c := a+ib \in \bZ + i \bR$. The negative complex cone angle is the residue at $c$ of the regular connection determined by the dilation surface. Finally, part of the data of $\kappa$ will be whether each pole with trivial local holonomy has vanishing or nonvanishing residue. We say that the dilation surface belongs to the \emph{stratum} $\cD(\kappa)$ where $\kappa = (\kappa_c)_{c \in C}$.


\subsection{Scalings and twisted holomorphic $1$-forms}\label{SS:scalings} If $X$ is a dilation surface endowed with no other structure, there is not a natural choice of scale: precisely, $X$ and $\mathrm{diag}(\lambda, \lambda) \cdot X$ are isomorphic as dilation surfaces and so determine the same point of $\cD(\kappa)$ for any $\lambda \in \bR_{>0}$. Thus the $\mathrm{GL}(2, \bR)$ action reduces to one by $\mathrm{SL}(2, \bR)$. We will say that two dilation surfaces that differ by the action of $\mathrm{SO}(2)$ are \emph{the same up to rotation}. We will write the set of dilation surfaces in $\cD(\kappa)$ up to rotation as $\cD(\kappa)/S^1$. In particular, $\cD(\kappa) \ra \cD(\kappa)/S^1$ is a circle bundle. Let $\cD_s(\kappa)$ be the complement of the zero section of the associated complex line bundle. One advantage to working with $\cD_s(\kappa)$ is that it is the complement of the zero section for the pullback of a holomorphic line bundle over $\cM_{g,n}$ and hence a complex analytic space, unlike $\cD(\kappa)$. 

 When $X$ has a cone point $c$ with trivial local holonomy, the elements of $\cD_s(\kappa)$ can be understood geometrically. In this case, there is a holomorphic $1$-form $\omega_c$, defined up to positive real scaling, in a neighborhood $U_c$ of $c$ that gives the dilation surface structure. The elements of $\cD_s(\kappa)$ are simply the elements of $\cD(\kappa)$ together with a \emph{scaling}, i.e. the germ at $c$ of some choice of $\omega_c$. It is clear that the $\mathrm{SL}(2, \bR)$ action on $\cD(\kappa)$ extends to a $\mathrm{GL}(2, \bR)$ action on $\cD_s(\kappa)$. If $\omega_c$ has a zero at $c$, then we will call $c$ \emph{translation-like}. The name is chosen since $c$ has a neighborhood that admits the same local model as that of a cone point on a finite area translation surface. 

Let $T$ be the collection of cone points with trivial local holonomy on $X$. The dilation surface structure on $X$ is specified by a holomorphic $1$-form $\omega$, up to positive real scaling, on the universal cover of $X \setminus (C \setminus T)$ so that $\gamma^* \omega = \chi(\gamma)^{-1} \omega$ for all $\gamma \in \pi_1(X \setminus (C \setminus T))$. This is called a twisted holomorphic $1$-form and its moduli space is $\Omega^{tw} \cM_{g,n}(\kappa)$. A scaling allows us to specify $\omega$ precisely, not just up to scaling, by specifying a lift $\wt{c}$ of a point $c \in T$ and demanding that $\omega$ agree with the lift of $\omega_c$ in a neighborhood of $\wt{c}$. We record the following consequence of this discussion.

\begin{lem}
Suppose that $\cD(\kappa)$ is a stratum of genus $g$ dilation surfaces with at least one cone point with trivial local holonomy and exactly $n$ points with nontrivial local holonomy. Then there is a bijection between $\cD_s(\kappa)$ and $\Omega^{tw}\cM_{g,n}(\kappa)$.
\end{lem}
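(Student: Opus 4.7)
The plan is to construct explicit mutually inverse maps $\Phi: \cD_s(\kappa) \to \Omega^{tw}\cM_{g,n}(\kappa)$ and $\Psi: \Omega^{tw}\cM_{g,n}(\kappa) \to \cD_s(\kappa)$. The guiding principle, already flagged in the paragraph just preceding the lemma, is that a scaling (the germ of $\omega_c$ at a cone point $c \in T$ with trivial local holonomy) is exactly the data needed to promote a twisted $1$-form defined up to positive real scaling to a canonical representative.

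To construct $\Phi$, start with a scaled dilation surface $(X,\omega_c) \in \cD_s(\kappa)$. The underlying dilation structure produces a holonomy character $\chi:\pi_1(X\setminus (C\setminus T)) \to \bR_{>0}$ and, via the standard dictionary recalled in Section \ref{S:DilationSurfaces}, a meromorphic section of $K_X \otimes L_\chi$ with zeros and poles only at $C$, defined up to multiplication by an element of $\bR_{>0}$. Since the local holonomy at $c$ is trivial, $L_\chi$ is canonically trivialized in a neighborhood $U_c$, and the candidate twisted $1$-form restricts to a single-valued holomorphic $1$-form on $U_c\setminus\{c\}$ that extends to a meromorphic form on $U_c$ with the prescribed order. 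There is then a unique positive real rescaling that makes this restriction agree with the germ $\omega_c$, and this pins down a well-defined element of $\Omega^{tw}\cM_{g,n}(\kappa)$. For $\Psi$, start with $(X,\omega) \in \Omega^{tw}\cM_{g,n}(\kappa)$. Local antiderivatives $\int\omega$ on the universal cover of $X\setminus(C\setminus T)$ furnish flat charts whose transition functions lie in $\Dil$, recovering the dilation structure, and the restriction of $\omega$ to a neighborhood of $c \in T$ is a single-valued holomorphic $1$-form (again because local holonomy is trivial there), which one declares to be the scaling germ.

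Finally, one checks that $\Phi$ and $\Psi$ are mutual inverses: in both compositions the rescaling is forced by matching the germ at $c$, so the identifications are tautological once the maps are set up. The main (minor) obstacle is bookkeeping rather than substance: one must verify that both constructions respect isomorphisms of the underlying data so they descend to the moduli spaces, and one must match the signature $\kappa$ in both directions, including the cone angle, the local holonomy, and (at cone points in $C\setminus T$ that are integral poles) the residueless vs.\ non-residueless distinction recorded in the restricted stratum conventions. Each of these pieces of data is intrinsic to either the germ of the meromorphic section of $K_X \otimes L_\chi$ or to the local dilation model at a cone point, and the two descriptions were set up in Sections \ref{S:DilationSurfaces} precisely so as to agree, so the verifications are routine.
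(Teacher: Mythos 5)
Your proposal is correct and follows essentially the same route as the paper: the paper records this lemma as an immediate consequence of the preceding discussion (a dilation surface is a twisted $1$-form up to positive real scaling, and the germ $\omega_c$ at a trivial-holonomy cone point pins down the scale uniquely), and your maps $\Phi$ and $\Psi$ simply make the two directions of that identification explicit. The extra bookkeeping you flag (descent to moduli, matching the data in $\kappa$) is handled implicitly in the paper by the dictionary of \cite[Section 2]{ABW1}, so there is nothing missing.
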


\subsection{Scalings and prongings in the presence of translation-like cone points}

Our description of period coordinates in the next subsection will depend on using a translation-like cone point. We will therefore give a second description of the scalings in the setting where such cone points are present. A translation-like cone point can always be formed by marking a point.

Let $X \in \cD(\kappa)/S^1$ be a dilation surface up to rotation. Suppose that it has a translation-like cone point $p$ with cone angle $2\pi k > 0$. In particular, this determines a flat metric up to scaling in a neighborhood of $p$. A preimage of $X$ under the map $\cD(\kappa) \ra \cD(\kappa)/S^1$ is determined by a {\em pronging} at $p$, i.e. by $k+1$ distinct evenly-spaced unit tangent vectors at $p$. The pronging resolves the ambiguity of which directions are horizontal. The relative tangent bundle $L$ at $p$ over $\cD(\kappa)/S^1$ is the bundle whose fiber over $X$ is $T_pX$. A pronging is a section of the circle bundle associated to $L^{\otimes (k+1)}$. In the terminology of the tautological ring of $\cM_{g,n}$, the Euler class of the circle bundle $\cD(\kappa) \ra \cD(\kappa)/S^1$ is the pullback of $-(k+1)\psi_p$ where $\psi_p$ is the $\psi$-class associated to $p$.

Say that a \emph{scaled pronging} or, more briefly, a \emph{scaling}, is a collection of $k+1$ distinct evenly spaced nonzero tangent vectors of equal length at $p$. These vectors are no longer required to be unit vectors. Given a dilation surface $X$ up to rotation, a scaling tells us both which direction is horizontal and provides a way to choose a scale for the flat metric that the dilation structure induces on a neighborhood of $p$. The next result shows that the new definition of a scaling agrees with the one defined in the previous section. The surfaces in $\cD_s(\kappa)$ are \emph{scaled dilation surfaces}, i.e. dilation surfaces up to rotation together with a scaling at $p$. We stress that scaled dilation surfaces come equipped with a choice of a translation-like cone-point $p$. We note that $\mathrm{GL}(2, \bR)$ acts on $\cD_s(\kappa)$ with the constant multiples of the identity changing the scaling at $p$ and hence now acting nontrivially. 

\begin{lem}
Given a dilation surface $X$ and a translation-like cone point $p$, there is a flat metric defined on a neighborhood of $p$ up to scaling by a positive-real scalar. The choice of specific flat metric (no longer just up to scaling) is equivalent to choosing a scaling at $p$.
\end{lem}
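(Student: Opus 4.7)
The plan is to reduce the lemma to exhibiting canonical $\bR_{>0}$-equivariant bijections among three notions of scaling data at the translation-like cone point $p$: (a) a specific germ of a holomorphic $1$-form $\omega_p$ inducing the dilation structure on a neighborhood $U_p$ of $p$, (b) a specific choice of flat metric on $U_p$ within its canonical scaling class, and (c) a specific scaled pronging at $p$ as defined in this subsection.

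My first step would be to produce $\omega_p$ and establish the first sentence. Because transition functions in $\Dil$ are of the form $az+b$ with $a \in \bR_{>0}$, the differentials $d\phi$ of dilation charts patch together on $U_p$ into a $1$-form $\omega_p$, well-defined up to multiplication by a positive real scalar; translation-likeness of $p$ ensures that $\omega_p$ extends holomorphically across $p$. The induced flat metric $|\omega_p|^2$ inherits the same positive-real-scaling ambiguity, proving the first sentence. The correspondence (a) $\leftrightarrow$ (b) is then immediate: $\omega_p \mapsto |\omega_p|^2$ is monotonically $\bR_{>0}$-equivariant, so specifying a representative on one side specifies one on the other.

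The substantive step is the bijection (a) $\leftrightarrow$ (c), for which I would pass to the local branched coordinate $w$ on $U_p$ satisfying $\omega_p = dw$. In this coordinate $p$ is the branch point, and the pronging vectors unfold to evenly-spaced rays from $0$ in the $w$-plane with common Euclidean length $\ell$ determined by the positive real scale of $\omega_p$. Conversely, specifying $\ell > 0$ pins down $w$ up to the intrinsic cone rotation, and hence pins down the scale of $\omega_p$. I expect the main subtlety to be checking compatibility with the intrinsic notion of pronging as a section of the circle bundle of the $(k+1)$-st tensor power of the relative tangent bundle at $p$, which amounts to identifying the pronging rays with the horizontal separatrices at $p$ up to the convention on cone angles. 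Composing the two bijections then yields the lemma.
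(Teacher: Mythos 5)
Your proposal is correct and follows essentially the same route as the paper: both pass to the branched flat coordinate $w = z^{k+1}$ and match the scale of the flat structure against the common length of the pronging vectors, so that a specific flat metric and a specific scaled pronging determine one another. The one point where the paper is more explicit is in making ``length $\ell$ of a prong vector'' precise: it sets $|v| := \lim_{t \to 0} \mathrm{dist}(p,\gamma(t))/t^{k+1}$ for a path $\gamma$ with $\gamma'(0)=v$, so the norm induced on $T_pX$ by the flat metric is homogeneous of degree $k+1$ rather than $1$ --- which is exactly the compatibility with $L^{\otimes(k+1)}$ that you correctly flag as the main subtlety.
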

\begin{proof}
Suppose that $v$ is a tangent vector at $p$. Using local coordinates we will identify $p$ with the origin and therefore can use $tv$ as a smooth path through $p$. Converting to flat coordinates is then formed by applying the map $f(z) = z^{k+1}$. Therefore, our smooth path becomes $t^{k+1} v^{k+1}$ in flat coordinates. We choose the flat metric so that this path has distance $t^{k+1}$ from $p$. Therefore, in a coordinate-free formulation, suppose that $\gamma: (-\epsilon, \epsilon) \ra X$ is a smooth path so that $\gamma(0) = p$ and $\gamma'(0) = v$. Then we may uniquely choose the flat metric in a neighborhood of $p$ so that $\mathrm{dist}(p, \gamma(t)) = t^{k+1} + O(t^{k+2})$. Conversely, we see that by setting $|v| := \lim_{t \ra 0} \frac{\mathrm{dist}(p, \gamma(t))}{t^{k+1}}$, a flat metric determines a function $|\cdot |: T_p X \ra \bR_{\geq 0}$ that is $\mathrm{SO}(2)$ invariant and so that $|rv| = r^{k+1}|v|$ for any real positive $r$.
\end{proof}

\subsection{Holonomy and periods}\label{SS:monholo}

As in the classical setting of translation surfaces, a dilation surface has an associated {\em period mapping}. To formulate this, we must understand how to interpret a dilation surface as a complex-analytic object.

A dilation surface with holonomy character $\chi$ is equivalent to a Riemann surface structure $X$ on $\Sigma_{g,n}$ together with a holomorphic $1$-form $\omega$ on its universal cover so that $\gamma^* \omega = \chi(\gamma)^{-1} \omega$ for all $\gamma \in \pi_1(\Sigma_{g,n})$ where $\omega$ is determined up to positive real scaling. If the dilation surface has a translation-like cone point $p$ specified and a choice of a lift $\wt{p}$ of $p$ to the universal cover, then a scaling at $p$ specifies a scale factor for $\omega$. In other words, it picks out a specific $\omega$ from the set $\{c\omega\}_{c > 0}$. One could also think of $\omega$ as a section of the twisted canonical bundle $K_X \otimes L_\chi$ where $K_X$ is the canonical bundle and $L_\chi$ is a flat holomorphic bundle with character $\chi$. This leads to the construction of the period map for dilation surfaces: define 

 \[
 \lambda: \pi_1(\Sigma_{g,n}, p) \ra \bC
 \]
 via
 \[
 \lambda(\gamma) = \int_{\wt{\gamma}} \omega
 \] 
 where $\wt{\gamma}$ is the lift of $\gamma$ so that it is based at $\wt{p}$.  It is easy to see that $\lambda$ is a twisted cocycle for the coefficient module $\bC_\chi$, i.e. $\bC$ with $\pi_1(\Sigma_{g,n})$ acting multiplicatively by $\chi$:
\begin{equation}\label{eqn:cocycle2}
    \lambda(\gamma_2\gamma_1) = \lambda(\gamma_2) + \chi(\gamma_2)\lambda(\gamma_1).
\end{equation}

As before we consider the space $Z^1(\Sigma_{g,n}; \bC_\chi)$ of these cocycles and the associated cohomology groups $H^1(\Sigma_{g,n}; \bC_\chi)$.

Define the {\em dilation subgroup} $\Dil \le \Aff(\bC)$ via
\[
\Dil = \{az+b \mid a \in \bR_{>0}, b \in \bC\}.
\]

An unscaled dilation surface can be described as the data of a $(\Dil, \bC)$-structure on $\Sigma_{g,n}$. In general, the developing map associates to a dilation surface a homomorphism $\pi_1(\Sigma_{g,n}, p) \ra \Dil$ up to conjugation by $\Dil$. The developing map for a scaled dilation surface removes the need for this conjugation. We remark that a homomorphism from $\pi_1(\Sigma_{g,n}, p) \ra \Dil$ has the form $\gamma \ra \chi(\gamma) + \lambda(\gamma)$, for each $\gamma \in \pi_1(\Sigma_{g,n}, p)$, where $\chi \in H^1(\Sigma_{g,n}, \bR_{>0})$ and where $\lambda \in Z^1_\chi(\bC)$. We conclude with the following crucial observations that relate the moduli of dilation surfaces to the bundles studied in the previous sections.

\begin{lem}
The developing map associates a scaled dilation surface to an element of $\mathrm{Hom}(\pi_1(\Sigma_{g,n}, p), \Dil)$ that is given as $\chi(\gamma)z + \lambda(\gamma)$, for all $\gamma \in \pi_1(\Sigma_{g,n}, p)$ where $\chi \in H^1(\Sigma_{g,n}; \bR_{>0})$ is the holonomy character and $\lambda \in Z^1_\chi(\bC)$ is the period of the dilation surface.
\end{lem}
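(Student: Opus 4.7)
The plan is to construct the developing map explicitly by integrating the twisted holomorphic $1$-form on the universal cover, and then to compute directly how the map transforms under deck transformations. Essentially everything needed is already set up in Section \ref{SS:monholo}: we have a scaled dilation surface encoded as a Riemann surface $X$ together with a specific (not merely up to scaling) holomorphic $1$-form $\omega$ on the universal cover $\wt{X}$ of $X \setminus (C \setminus T)$ satisfying the twisting condition $\gamma^* \omega = \chi(\gamma)^{-1} \omega$, with the normalization pinned down by the choice of scaling at the fixed translation-like cone point $p$ and the fixed lift $\wt{p}$.

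First I would define the developing map by
\[
D(x) = \int_{\wt{p}}^x \omega,
\]
where the integral is taken along any smooth path in $\wt{X}$. Well-definedness follows from the fact that $\omega$ is holomorphic (hence closed) and $\wt{X}$ is simply connected. The normalization of $\omega$ determined by the scaling at $\wt{p}$ means that $D$ is canonically determined by the scaled dilation surface, not merely up to postcomposition by an element of $\Dil$.

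Next, I would compute the effect of a deck transformation $\gamma \in \pi_1(\Sigma_{g,n}, p)$ on $D$. Splitting the integral as
\[
D(\gamma \cdot x) \;=\; \int_{\wt{p}}^{\gamma \cdot \wt{p}} \omega \;+\; \int_{\gamma \cdot \wt{p}}^{\gamma \cdot x} \omega,
\]
the first term equals $\lambda(\gamma)$ by the definition of $\lambda$ as the period integral along the lift $\wt{\gamma}$ based at $\wt{p}$. For the second term, the change of variables $y = \gamma^{-1}\cdot z$ together with the twisting condition $\gamma^*\omega = \chi(\gamma)^{-1}\omega$ (and the convention for the $\pi_1$-action established in Section \ref{SS:monholo} so that the cocycle identity \eqref{eqn:cocycle2} holds) turns the integral into $\chi(\gamma) D(x)$. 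Combining, one obtains the equivariance relation
\[
D(\gamma \cdot x) \;=\; \chi(\gamma)\, D(x) + \lambda(\gamma),
\]
which is precisely the statement that $D$ intertwines the deck action of $\pi_1$ with the affine action $z \mapsto \chi(\gamma)\,z + \lambda(\gamma)$ on $\bC$.

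Finally I would record that this formula does define a homomorphism: the fact that $\rho(\gamma) := \chi(\gamma)z + \lambda(\gamma)$ is multiplicative in $\gamma$ is equivalent (via the group law \eqref{E:affmult} in $\Aff(\bC)$) to $\chi$ being a character and $\lambda$ being a twisted cocycle in $Z^1_\chi(\bC)$, both of which are built into the input data. Since $\chi$ takes values in $\bR_{>0}$, the image of $\rho$ lies in $\Dil \le \Aff(\bC)$. The only genuinely delicate step is keeping sign/variance conventions for $\chi$ and for the $\pi_1$-action on $\wt X$ consistent with the cocycle identity \eqref{eqn:cocycle2} and with $\gamma^*\omega = \chi(\gamma)^{-1}\omega$; this is just bookkeeping, but it is what one has to check carefully. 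Everything else follows from the standard $(\Dil, \bC)$-structure machinery applied in this scaled setting.
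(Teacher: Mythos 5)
Your proposal is correct and is essentially the argument the paper intends: the lemma is stated in Section \ref{SS:monholo} without a separate proof, as a summary of the immediately preceding discussion identifying the developing map of the scaled $(\Dil,\bC)$-structure with the primitive $\int_{\wt p}^{x}\omega$ and its translation part with the period cocycle $\lambda$. Your explicit deck-transformation computation just writes out that identification, and your closing caveat about matching the variance conventions for $\chi$ and the $\pi_1$-action against \eqref{eqn:cocycle2} and $\gamma^*\omega=\chi(\gamma)^{-1}\omega$ is exactly the one point that needs care.
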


As in the previous section, there is a derivative map 
\[
\Pi: \mathrm{Hom}(\pi_1(\Sigma_{g,n}, p), \Dil) \to H^1(\Sigma_{g,n}; \bR_{>0}),
\]
and we define (using temporary notation) 
\[
\mathrm{Hom}^\circ(\pi_1(\Sigma_{g,n}, p), \Dil) := \Pi^{-1}(H^1(\Sigma_{g,n}; \bR_{>0})^\circ);
\]
in general the superscript $\circ$ indicates that the fiber over $0$ has been deleted.

\begin{lem}
    $\mathrm{Hom}^\circ(\pi_1(\Sigma_{g,n}, p), \Dil)$ is a complex vector bundle over $H^1(\Sigma_{g,n}, \bR_{>0})^\circ$, realizable as the complexification of the real vector bundle $\cV_{\bR}^+ = \mathrm{Hom}^\circ(\pi_1(\Sigma_{g,n}, p), \mathrm{Aff}^+(\bR))$.
\end{lem}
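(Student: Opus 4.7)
The plan is to adapt the analysis of Section \ref{SS:affholbasics} to the dilation setting, using the key observation that the holonomy character $\chi$ is real-valued while the translation part lies in $\bC$. First I would identify the fiber of $\Pi$ over $\chi \in H^1(\Sigma_{g,n}, \bR_{>0})^\circ$ with the space $Z^1(\pi_1, \bC_\chi)$ of $\bC$-valued twisted cocycles, via the correspondence $\lambda \leftrightarrow (\gamma \mapsto \chi(\gamma) z + \lambda(\gamma))$. The verification that this assignment is a group homomorphism precisely when $\lambda$ satisfies the cocycle equation is formally identical to the calculation following Equation \eqref{E:affmult} and is insensitive to whether the coefficient module is $\bR_\chi$ or $\bC_\chi$.

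The new input is the observation that because $\chi$ takes values in $\bR_{>0} \subset \bC^\times$, the $\bZ[\pi_1]$-module $\bC_\chi$ is canonically the complexification of $\bR_\chi$; since the cocycle equation is $\bR$-linear in the coefficient module, scalar extension induces a canonical identification
\[
Z^1(\pi_1, \bC_\chi) \;\cong\; Z^1(\pi_1, \bR_\chi) \otimes_{\bR} \bC.
\]
This pointwise identifies each fiber of $\Hom^\circ(\pi_1, \Dil) \to H^1(\Sigma_{g,n}, \bR_{>0})^\circ$ with the complexification of the corresponding fiber of $\cV_{\bR}^+ \to H^1(\Sigma_{g,n}, \bR_{>0})^\circ$.

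To promote the pointwise identification to a bundle isomorphism, I would reuse the trivializations from Corollary \ref{C:FiberBundle}. On each open subset $U_c = \{\chi(d) \ne 1\}$, the complex analogue of Lemma \ref{L:StandardBasisOfCocycles} (whose proof is purely formal and does not depend on the ambient field) shows that evaluation at the generators of $\pi_1$ other than $c$ gives a $\bC$-linear isomorphism from the fiber to a fixed complex vector space. The resulting transition functions are the same rational functions of $\chi$ that appeared in the real case, so they are $\bR$-valued and extend $\bC$-linearly, exhibiting $\Hom^\circ(\pi_1, \Dil)$ as the complexification of $\cV_{\bR}^+$.

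The only technical wrinkle is the rank computation: when $n > 0$, $\pi_1(\Sigma_{g,n})$ is free of rank $2g + n - 1$, so the cocycle relation coming from \eqref{eqn:twistedcocyclecondition} disappears and every fiber has constant complex dimension $2g + n - 1$; when $n = 0$ the analysis of Lemma \ref{L:Zdim} applies directly and the fiber has constant complex dimension $2g - 1$. I do not anticipate a serious obstacle — the assertion is essentially formal once one has the cocycle description of the fiber, the complexification structure on the coefficient module, and the rationality of the trivializations. The bulk of the work in this section is really setting up notation and recording compatibilities, rather than overcoming a hard step.
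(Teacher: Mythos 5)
Your proposal is correct and follows exactly the route the paper intends: the paper states this lemma without proof, treating it as immediate from the cocycle description of the fibers (the $\Dil$ analogue of the lemma following \eqref{E:affmult}), the observation that $\bC_\chi = \bR_\chi \otimes_\bR \bC$ because $\chi$ is $\bR_{>0}$-valued, and the real rational trivializations of Corollary \ref{C:FiberBundle}. Your added remark that for $n>0$ the fundamental group is free so the fibers have constant rank $2g+n-1$ and the bundle is globally trivial is a correct and welcome clarification, not a deviation.
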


Following this, we re-define
\[
\cV_\bR^+ \otimes \bC :=\mathrm{Hom}^\circ(\pi_1(\Sigma_{g,n}, p), \Dil)
\]

$\cV_\bR^+ \otimes \bC$ admits an action of $\Mod_{g,n+1} \times \mathrm{SL}(2, \bR)$ where $\Mod_{g,n+1}$ acts by precomposition and $\mathrm{SL}(2, \bR)$ acts by post-composition on elements of $Z^1_\chi(\bC)$, viewed as functions from $\pi_1(\Sigma_{g,n},  p)$ to $\bC \cong \bR^2$. We then obtain the following construction of measures on $\cV_{\bR}^+ \otimes \bC$.

\begin{lem}\label{lemma:taco}
Any measure on $\cV_\bR^+$ whose restriction to each fiber is a multiple of Lebesgue measure determines an $\mathrm{SL}(2, \bR)$-invariant measure on $\cV_\bR^+ \otimes \bC$. 
\end{lem}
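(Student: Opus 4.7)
The plan is to construct the desired measure as a fiberwise self-product of $\mu$, exploiting an identification of $\cV_\bR^+ \otimes \bC$ with the fiber product $\cV_\bR^+ \times_B \cV_\bR^+$ over $B := H^1(\Sigma_{g,n}; \bR_{>0})^\circ$, and then verify $\mathrm{SL}(2,\bR)$-invariance by a one-line determinant computation.

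First, I would set up the fiberwise splitting. Since $\chi$ takes values in $\bR_{>0}$, the twisted cocycle condition \eqref{eqn:cocycle2} is $\bR$-linear in the target. Consequently, taking real and imaginary parts of a $\bC$-valued twisted cocycle yields an isomorphism of real vector spaces
\[
Z^1_\chi(\bC) \;\cong\; Z^1_\chi(\bR) \oplus i\, Z^1_\chi(\bR).
\]
This isomorphism is natural in $\chi$, so it globalizes to an isomorphism of real vector bundles $\cV_\bR^+ \otimes \bC \cong \cV_\bR^+ \oplus \cV_\bR^+$ over $B$.

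Next, disintegrate $\mu$ against its pushforward $\nu$ to $B$ to obtain, for $\nu$-a.e.\ $\chi$, a Lebesgue-class measure $\mu_\chi$ on the fiber $Z^1_\chi(\bR)$. Define the measure $\tilde \mu$ on $\cV_\bR^+ \otimes \bC$ whose fiber over $\chi$ is the product measure $\mu_\chi \otimes \mu_\chi$ on $Z^1_\chi(\bR) \oplus i\, Z^1_\chi(\bR)$, and integrate against $\nu$ on the base.

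Finally, I would verify invariance. The $\mathrm{SL}(2,\bR)$ action acts only on the $\bC$-values (by post-composition) and therefore fixes each point of the base $B$, so it suffices to check invariance fiberwise. For $A = \bigl(\begin{smallmatrix} a & b \\ c & d \end{smallmatrix}\bigr) \in \mathrm{SL}(2,\bR)$ and $(\lambda_1, \lambda_2) \in Z^1_\chi(\bR) \oplus Z^1_\chi(\bR)$, post-composition sends $(\lambda_1,\lambda_2)$ to $(a\lambda_1 + b\lambda_2,\, c\lambda_1 + d\lambda_2)$. As a linear endomorphism of the $2\dim_\bR Z^1_\chi(\bR)$-dimensional real vector space $Z^1_\chi(\bR) \oplus Z^1_\chi(\bR)$, this is $A \otimes \mathrm{Id}_{Z^1_\chi(\bR)}$, which has determinant $\det(A)^{\dim Z^1_\chi(\bR)} = 1$. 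Hence $\mu_\chi \otimes \mu_\chi$ is preserved, and so is $\tilde \mu$. The argument is essentially automatic once the splitting is in place; the only subtlety is that the disintegration and the fiberwise splitting piece together measurably, which is ensured by the $\bR$-linearity of the cocycle condition for real $\chi$.
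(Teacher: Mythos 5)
Your proof is correct. The paper states Lemma \ref{lemma:taco} without proof, and your argument --- splitting $Z^1_\chi(\bC) \cong Z^1_\chi(\bR) \oplus i\,Z^1_\chi(\bR)$ via the $\bR$-linearity of the twisted cocycle condition for real-valued $\chi$, then observing that post-composition by $A \in \mathrm{SL}(2,\bR)$ acts on each fiber as $A \otimes \mathrm{Id}$ with determinant $\det(A)^{\dim Z^1_\chi(\bR)} = 1$, hence preserves any fiberwise Lebesgue-class product measure --- is precisely the intended one, consistent with how the lemma is invoked in Corollary \ref{C:MainTool} and with the determinant formula of Lemma \ref{L:LinearAlgebra}.
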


\subsection{Dilation surfaces and framings}
A dilation surface $X$ carries a distinguished isotopy class of framing $\xi$ induced from the vector field $\xi_1$ pointing in the direction of the oriented line-field $\ker(\Re \omega)$, where $\omega$ is the $1$-form associated to $X$.

The presence of the distinguished framing on a dilation surface has the following important consequence. The general theory of moduli provides for a {\em global monodromy homomorphism}
\[
\mu: \pi_1^{orb}(\cD(\kappa)/S^1) \to \Mod_{g,n}
\]
defined (somewhat informally) by recording the change-of-marking map as one moves around a loop in $\cD(\kappa)/S^1$. Choosing a basepoint $X \in \cD(\kappa)/S^1$ and letting $\xi$ be the associated distinguished framing, it follows that 
\[
\Im(\mu) \le \Mod_{g,n}[\xi].
\]
We note in passing that \cite{ABW1} establishes a vastly stronger result, that not only is this containment an equality, but in fact $\cD(\kappa)$ itself is an orbifold $K(\pi,1)$ space for this framed mapping class group. However, for the sequel, only the containment above will be necessary.

\section{Proof of Theorem \ref{T:main:general}}\label{S:ProofOfTheorems}

Suppose that $\cD(\kappa)$ is a stratum of dilation surfaces with at least one translation-like cone point $p$. Let $P$ be the set of translation-like cone points, $T$ the set of poles with trivial local holonomy and zero residue, and $B$ the remaining cone points. We will require a slight generalization of the notion of a twisted cocycle. The fundamental groupoid, notated $\pi_1(\Sigma_g \setminus B, P)$, is the category whose objects are $P$ and whose morphisms are arcs in $\Sigma_g \setminus B$ joining them, up to homotopies that fix endpoints. We will say that a \emph{relative twisted cocycle} $\lambda \in Z^1(\Sigma_g \setminus B, P; \bC_{\wt{\chi}})$ is a function $\lambda: \pi_1(\Sigma_g \setminus B, P) \ra \bC$ together with a character $\wt{\chi} \in H^1(\Sigma_g \setminus B, P; \bC^\times)$ so that the cocycle condition \eqref{eqn:cocycle} holds for composable paths. Label the points in $P$ as $\{p_0, p_1, \hdots, p_n\}$ where $p = p_0$. Let $c_1, \hdots, c_n$ be a collection of arcs from $p$ to $p_1, \hdots, p_n$ respectively.

\begin{lem}\label{L:ChoosingRelativeTwisted}
Fix $\lambda \in Z^1(\Sigma_g \setminus B; \bC_\chi)$.  Let $a_1, \hdots, a_n$ be in $\bC$ and $b_1, \hdots, b_n$ in $\bC^\times$. There is a unique character $\wt{\chi} \in H^1(\Sigma_g \setminus B, P; \bC^\times)$ that agrees with $\chi$ on $\pi_1(\Sigma_g \setminus B, p)$ and that sends $c_i$ to $b_i$ and there is a unique $\wt{\lambda} \in Z^1(\Sigma_g \setminus B, p; \bC_{\wt{\chi}})$ that agrees with $\lambda$ on paths in $\pi_1(\Sigma_g \setminus B, p)$ and that sends $c_i$ to $a_i$ for all $i$.
\end{lem}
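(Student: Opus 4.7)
The plan is to exploit a canonical presentation of the fundamental groupoid $\pi_1(\Sigma_g \setminus B, P)$ afforded by the arcs $c_i$. Set $p_0 := p$, $c_0 := \mathrm{id}_p$, $a_0 := 0$, and $b_0 := 1$. Then every morphism $\alpha: p_i \to p_j$ in the groupoid factors uniquely as $\alpha = c_j \cdot \gamma_\alpha \cdot c_i^{-1}$ with $\gamma_\alpha := c_j^{-1} \cdot \alpha \cdot c_i \in \pi_1(\Sigma_g \setminus B, p)$. Since the groupoid is thereby generated by loops at $p$ together with the arcs $c_i$, any character or cocycle extending $(\chi, \lambda)$ with the prescribed values on the $c_i$ is fully determined; this immediately yields uniqueness for both $\widetilde\chi$ and $\widetilde\lambda$.

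For existence of $\widetilde\chi$, I would define $\widetilde\chi(\alpha) := b_j \chi(\gamma_\alpha) b_i^{-1}$, which is well-defined precisely because each $b_i \in \bC^\times$. This agrees with $\chi$ on loops at $p$ and sends $c_i$ to $b_i$; multiplicativity follows at once from the factorization $\beta \alpha = c_k (\gamma_\beta \gamma_\alpha) c_i^{-1}$. For existence of $\widetilde\lambda$, the cocycle equation \eqref{eqn:cocycle} applied to $\alpha = c_j \gamma_\alpha c_i^{-1}$, combined with the consequence $\widetilde\lambda(\mathrm{id}_{p_i}) = 0$ (which forces $\widetilde\lambda(c_i^{-1}) = -a_i/b_i$), dictates the formula
\[
\widetilde\lambda(\alpha) := a_j + b_j\, \lambda(\gamma_\alpha) - b_j\, \chi(\gamma_\alpha)\, b_i^{-1} a_i.
\]
Verifying that this candidate satisfies the cocycle relation $\widetilde\lambda(\beta \alpha) = \widetilde\lambda(\beta) + \widetilde\chi(\beta)\, \widetilde\lambda(\alpha)$ on all composable pairs is a direct calculation that, after cancellation of the ``intermediate'' terms involving $a_j$, reduces to the identity $\lambda(\gamma_\beta \gamma_\alpha) = \lambda(\gamma_\beta) + \chi(\gamma_\beta)\, \lambda(\gamma_\alpha)$ already satisfied by $\lambda$ as an element of $Z^1(\Sigma_g \setminus B; \bC_\chi)$.

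The only real obstacle is the hypothesis $b_i \in \bC^\times$, which is exactly what is needed to invert $\widetilde\chi(c_i)$ in computing $\widetilde\lambda(c_i^{-1})$; no analogous restriction is required on the $a_i$, which may be arbitrary. Beyond this, the lemma amounts to bookkeeping inside the fundamental groupoid, and no deeper input from topology or geometry is involved.
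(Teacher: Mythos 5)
Your proposal is correct and follows essentially the same route as the paper's proof: both decompose an arbitrary groupoid morphism into the fixed arcs $c_i$ and a loop at $p$, observe that the value on $c_i^{-1}$ is forced to be $-b_i^{-1}a_i$, write down the resulting explicit formula, and note that the cocycle identity reduces to the one already satisfied by $\lambda$. The only differences are cosmetic (opposite composition convention and a slightly more explicit treatment of $\wt\chi$ and of the verification).
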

\begin{proof}
Every path from $p$ to $p_i$ can be written uniquely as $\gamma c_i$ where $\gamma \in \pi_1(\Sigma_g \setminus B, p)$. Set $\wt{\lambda}(\gamma c_i) = \lambda(\gamma) + \chi(\gamma) a_i$. Notice that we must have 
\[
0 = \tilde \lambda( c_i c_i^{-1}) = \lambda(c_i) + \chi(c_i)\lambda(c_i^{-1}) = a_i+ b_i \lambda(c_i^{-1}).
\]
and so $\lambda(c_i^{-1}) = -b_i^{-1} a_i$.  Any path in $\pi_1(\Sigma_g \setminus B, P)$ can be written as $c_i^{-1} \gamma c_j$ where $\gamma \in \pi_1(\Sigma_g \setminus B, P)$.  We must define,
\[ \lambda(c_i^{-1} \gamma c_j) = -b_i^{-1}a_i + b_i^{-1} \lambda(\gamma) + b_i^{-1}\chi(\gamma) a_j. \]
It is straightforward to check that $\wt{\lambda}$ satisfies the cocycle condition.
\end{proof}

Now fix $\chi \in H^1(\Sigma_g \setminus B; \bC^\times)$. Let $\wt{\chi}$ be any character in $H^1(\Sigma_g \setminus B, P; \bC^\times)$ whose image in $H^1(\Sigma_g \setminus B; \bC^\times)$ is $\chi$. A \emph{based twisted cocycle} is the restriction of an element of $Z^1_{\wt{\chi}}(\Sigma_g \setminus B, P; \bC)$ to paths in $\pi_1(\Sigma_g \setminus B, P)$ that are based at $p$. By the proof of the preceding lemma, a based twisted cocycle $\wt{\lambda}$ is independent of the choice of $\wt{\chi}$ and is entirely determined by its restriction to an element of $Z^1_\chi(\Sigma_g \setminus B, p; \bC)$ and the values it takes on $c_1, \hdots, c_n$. Let $Z^1_\chi(\Sigma_g \setminus B, P, p; \bC)$ be the space of based twisted cocycles. Let $\mathrm{pr}: Z^1_\chi(\Sigma_g \setminus B, P, p; \bC) \ra Z^1_\chi(\Sigma_g \setminus B, p; \bC)$ be the projection that only remembers the values associated to closed loops based at $p$. 

\begin{lem}\label{L:SES:BasedToUsual}
There is a short exact sequence
\[ 0 \ra \bC^{|P|-1} \ra Z^1_\chi(\Sigma_g \setminus B, P, p; \bC) \xra{pr} Z^1_\chi(\Sigma_g \setminus B, p; \bC) \ra 0. \]
\end{lem}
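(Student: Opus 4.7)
The plan is to exploit Lemma \ref{L:ChoosingRelativeTwisted} directly: it parametrizes based twisted cocycles by the pair consisting of the restriction to loops at $p$ and the values on the fixed arcs $c_1,\dots,c_n$. Since $|P|=n+1$, this parametrization yields both surjectivity of $\mathrm{pr}$ and an identification of its kernel with $\bC^n = \bC^{|P|-1}$.

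First I would construct the left-hand inclusion $\iota\colon \bC^{|P|-1}\to Z^1_\chi(\Sigma_g\setminus B, P, p;\bC)$. Given $(a_1,\dots,a_n)\in\bC^n$, apply Lemma \ref{L:ChoosingRelativeTwisted} to the zero cocycle in $Z^1(\Sigma_g\setminus B;\bC_\chi)$, with the prescribed values $a_i$ on $c_i$ and any choice of $b_i\in\bC^\times$ (say $b_i=1$) defining the auxiliary character $\wt\chi$. This produces a relative twisted cocycle whose restriction to paths based at $p$ has the explicit form $\iota(a_1,\dots,a_n)(\gamma c_i)=\chi(\gamma)a_i$ and vanishes on all loops $\gamma$ at $p$. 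This formula is manifestly linear in the $a_i$, injective (since $\iota(a)(c_i)=a_i$), and — crucially — independent of the auxiliary choice of $\wt\chi$, so that $\iota$ lands in the space of based twisted cocycles (which by definition is $\wt\chi$-independent).

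Next I would establish surjectivity of $\mathrm{pr}$. Given $\lambda\in Z^1_\chi(\Sigma_g\setminus B,p;\bC)$, a second application of Lemma \ref{L:ChoosingRelativeTwisted}, this time with prescribed values $a_i=0$ on $c_i$, furnishes a based twisted cocycle $\wt\lambda$ with $\mathrm{pr}(\wt\lambda)=\lambda$. The map $\mathrm{pr}$ is obviously linear as a restriction. Finally, for exactness in the middle: $\mathrm{pr}\circ\iota=0$ by construction. Conversely, if $\wt\lambda\in\ker\mathrm{pr}$, then $\wt\lambda$ vanishes on all loops at $p$, so by the uniqueness clause of Lemma \ref{L:ChoosingRelativeTwisted} (applied to the zero cocycle and the prescribed values $a_i:=\wt\lambda(c_i)$) we obtain $\wt\lambda=\iota(\wt\lambda(c_1),\dots,\wt\lambda(c_n))\in\Im(\iota)$.

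There is no real obstacle here; the statement is essentially a reformulation of the content of Lemma \ref{L:ChoosingRelativeTwisted}. The only small point requiring care is the well-definedness of $\iota$ as a map into based twisted cocycles, i.e.\ the fact that the resulting values $\chi(\gamma)a_i$ on $\gamma c_i$ are independent of the auxiliary lift $\wt\chi$ — a direct inspection of the formula.
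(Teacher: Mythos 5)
Your proof is correct and takes the same route as the paper, which simply declares the lemma ``immediate from Lemma \ref{L:ChoosingRelativeTwisted}''; you have just written out the details of that deduction (the inclusion via the zero cocycle with prescribed arc values, surjectivity via the zero splitting, and exactness via the uniqueness clause). The observation that the based cocycle is independent of the auxiliary lift $\wt\chi$ is exactly the point the paper records in the paragraph preceding the lemma, so nothing is missing.
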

\begin{proof}
This is immediate from Lemma \ref{L:ChoosingRelativeTwisted}. 
\end{proof}

This short exact sequence splits, since a given element of $Z^1_\chi(\Sigma_g \setminus B, p; \bC)$ has a unique extension to $Z^1_\chi(\Sigma_g \setminus B, P, p; \bC)$ given by demanding that each $c_i$ is sent to $0$. Let $(\xi_i)_{i=1}^n$ be the elements in $Z^1_\chi(\Sigma_g \setminus B, P, p; \bC)$ that restrict to zero on $\pi_1(\Sigma_g, p)$ and where $\xi_i(c_j) = \delta_{ij}$. Let $\gamma_i$ be the element of the dual space of $Z^1_\chi(\Sigma_g \setminus B, P, p; \bC)$ that evaluates a based twisted cocycle on $c_i$. 

Let $\cV^{rel}$ (resp. $\cV^{rel}_\bR$) be the bundle of complex (resp. real) based twisted cocycles over $H^1(\Sigma_g \setminus B; \bR_{>0})^\circ$. (Note that the characters in the base have real coefficients in either setting). Let $\cV^{rel}(\kappa)$ be the restriction of the bundle $\cV^{rel}$ to the subset of characters $\chi$ in $H^1(\Sigma_g \setminus B, \bR_{>0})$ where $\chi$ sends the loop around a point in $B$ to its local holonomy. 

If $\omega$ is a holomorphic $1$-form on the universal cover of $\Sigma_g \setminus B$ satisfying $\gamma^* \omega = \chi(\gamma)^{-1} \omega$ for all $\gamma \in \pi_1(\Sigma_g \setminus B, p)$, then the period map, which we continue to call $\lambda$, given by integrating paths from a lift $\wt{p}$ of $p$ to lifts of points in $P$, gives a based twisted cocycle as in Section \ref{SS:monholo}. This defines a {\em global period map} $\Phi: \wt{\cD_s}(\kappa) \ra \cV^{rel}(\kappa)$, where $\wt{\cD_s}(\kappa)$ is the pullback of $\cD_s(\kappa) \ra \cD(\kappa)/S^1$ to the universal cover of $\cD(\kappa)/S^1$, which is Teichm\"uller space.  We record the following result due to Apisa \cite{Apisa-Period}, extending work of Veech \cite{veech93}.

\begin{thm}\label{T:PeriodMapWithTransConePoints}
 The global period map is a framed mapping class group and $\mathrm{SL}(2, \bR)$ equivariant local diffeomorphism away from the locus of translation and homothety surfaces. 
 \end{thm}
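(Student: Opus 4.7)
The plan is to prove the theorem in three stages: (i) check equivariance, which is essentially formal from the construction; (ii) verify that domain and target have equal dimensions away from the excluded loci; and (iii) construct a local inverse to $\Phi$ via a cut-and-paste / Schiffer variation argument in the spirit of Veech's original proof for affine surfaces.

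For equivariance, both actions have transparent interpretations on the period side. The mapping class group acts on Teichm\"uller space (hence on $\widetilde{\cD_s}(\kappa)$) by change of marking, and acts on $\cV^{rel}(\kappa)$ by pulling back relative twisted cocycles through the induced action on the fundamental groupoid $\pi_1(\Sigma_g \setminus B, P)$; naturality of integration makes $\Phi$ equivariant. That one only sees the framed mapping class group (rather than all of $\Mod_{g,n}$) comes from the fact, recalled in Section \ref{S:DilationSurfaces}, that the distinguished framing $\xi$ coming from $\ker(\Re \omega)$ is preserved by the monodromy. The $\mathrm{SL}(2,\bR)$ action on scaled dilation surfaces is post-composition of charts by linear maps of $\bC \cong \bR^2$, which matches post-composition applied to the $\bC$-valued cocycle $\lambda$, giving $\mathrm{SL}(2,\bR)$-equivariance.

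For the local diffeomorphism property, I would first perform a dimension count. On the target, $\cV^{rel}(\kappa)$ is a vector bundle over the subvariety of $H^1(\Sigma_g \setminus B; \bR_{>0})^\circ$ cut out by prescribing the character on loops around points of $B$; the fiber dimension can be extracted from Lemma \ref{L:Zdim} and Lemma \ref{L:SES:BasedToUsual}. The stratum $\cD_s(\kappa)$ has a well-known dimension computed from $\kappa$ (genus plus number of zeros and poles, with corrections for residueless poles), and one verifies numerically that these match on the complement of translation and homothety surfaces. The substance is then the construction of a local inverse: near a basepoint $X$, decompose $X$ into polygonal pieces bounded by a tree of arcs connecting points of $P \cup B$ and a dual set of loops giving a basis of relative homology. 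A small perturbation of the period cocycle $\lambda$ together with the character $\chi$ specifies how to perturb the coordinates of these polygons and how to re-glue them via dilation maps, producing a nearby dilation surface. Checking that the resulting assignment is smooth and inverse to $\Phi$, and that the polygonal decomposition can be carried out near any basepoint, is the technical heart.

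The hard part is understanding why the excluded loci are genuinely unavoidable. At translation surfaces the holonomy character is trivial; this point has been deleted from the base of $\cV^{rel}$ because the fiber dimension of $Z^1_\chi$ jumps there (Lemma \ref{L:Zdim}), so $\Phi$ cannot extend to a local diffeomorphism across this stratum. At homothety surfaces the holonomy is conjugate into the real subgroup $\{az\}_{a \in \bR_{>0}} \subset \Dil$, which yields an extra real symmetry: the developing map factors (up to conjugation) through $\Aff^+(\bR)$, so the period data lies on a real codimension-$2g$ locus inside the complex fiber and the derivative of $\Phi$ fails to be surjective transversely to it. The exclusion of these two loci is exactly the place where the reconstruction argument breaks down, and I would expect the bulk of the technical work to be in verifying, via an explicit local model, that the construction succeeds everywhere outside them.
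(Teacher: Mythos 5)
First, a point of comparison: the paper does not prove this statement at all. It is recorded as a citation to Apisa \cite{Apisa-Period}, extending Veech \cite{veech93}, so there is no internal proof to measure your argument against. Your equivariance discussion (change of marking on the fundamental groupoid, the distinguished framing from $\ker(\Re\omega)$, post-composition by $\mathrm{SL}(2,\bR)$ on $\bC$-valued cocycles) is formal and fine, and a dimension count is a sensible sanity check.

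The genuine gap is in stage (iii). Your local inverse is built by decomposing $X$ into polygonal pieces bounded by arcs (saddle connections) joining the cone points and re-gluing after perturbing periods. For dilation surfaces such a decomposition need not exist: the triangulable locus is a \emph{proper} subset of the stratum, a point the paper itself emphasizes (see the discussion around Theorem \ref{T:area} and the citation to Duryev--Fougeron--Ghazouani on triangulability). Non-triangulable surfaces --- e.g.\ those carrying Reeb-type dilation cylinders with no transverse saddle connections --- are not contained in the locus of translation and homothety surfaces, so the theorem makes a claim exactly where your reconstruction has nothing to reconstruct from. Any correct proof must either use an analytic/deformation-theoretic argument for the derivative of $\Phi$ (as in Veech's treatment of affine structures) or some replacement for geodesic triangulations that works on the whole stratum; ``the polygonal decomposition can be carried out near any basepoint'' is precisely the step that fails.

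A secondary issue: you present the ``hard part'' as showing the excluded loci are unavoidable, but the theorem asserts nothing about behavior \emph{on} those loci, only that $\Phi$ is a local diffeomorphism off them; that energy is misdirected. Moreover your heuristic for homothety surfaces is off --- a homothety surface is one whose period cocycle $\lambda$ is a twisted coboundary, which is a one-complex-dimensional (plus the $\chi$ directions) sublocus of the fiber, not a ``real codimension-$2g$ locus,'' and observing that a construction breaks at a point does not show the derivative of $\Phi$ degenerates there.
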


To construct an invariant measure on a cover of $\cD_s(\kappa)$ we will construct a volume form on $\cV^{rel}$ that is invariant under a subgroup of the framed mapping class group and $\mathrm{SL}(2, \bR)$, use it to produce such a measure on $\cV^{rel}(\kappa)$, and pull this measure back to $\wt{\cD_s}(\kappa)$ under the global period map.



\begin{lem}\label{L:TrivialDeterminantBundle2}
The top exterior wedge of $(\cV^{rel})^*$ is a trivial line bundle.
\end{lem}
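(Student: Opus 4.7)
The plan is to reduce to Lemma~\ref{L:TrivialDeterminantBundle} via the split short exact sequence of Lemma~\ref{L:SES:BasedToUsual}. Let $\cV^{abs}$ denote the bundle over $H^1(\Sigma_g \setminus B;\bR_{>0})^\circ$ with fiber $Z^1_\chi(\Sigma_g \setminus B, p;\bC)$ over $\chi$. The splitting given by Lemma~\ref{L:SES:BasedToUsual} (send a $\pi_1$-cocycle to the unique based relative cocycle vanishing on each $c_i$) is canonical, so it globalizes to a bundle isomorphism
\[
\cV^{rel} \;\cong\; \cV^{abs} \,\oplus\, \underline{\bC}^{|P|-1},
\]
where the trivial summand is globally spanned by the sections $\xi_1,\dots,\xi_{|P|-1}$. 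Dually, the evaluation functionals $\gamma_1,\dots,\gamma_{|P|-1}$ assemble into a non-vanishing global section of the top wedge of the dual of the trivial summand.

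It therefore suffices to produce a non-vanishing global section $\omega^{abs}$ of $\bigwedge^{\mathrm{top}}(\cV^{abs})^*$; the desired section of $\bigwedge^{\mathrm{top}}(\cV^{rel})^*$ will then be $\omega^{abs} \wedge \gamma_1 \wedge \cdots \wedge \gamma_{|P|-1}$. There are two cases. If $B = \emptyset$, the bundle $\cV^{abs}$ is the restriction to real characters of the bundle considered in Lemma~\ref{L:TrivialDeterminantBundle}, which directly supplies $\omega^{abs}$. If $B \neq \emptyset$, then $\pi_1(\Sigma_g \setminus B, p)$ is free of rank $N = 2g + |B| - 1$: choosing a free basis $g_1,\dots,g_N$, the evaluation map $\lambda \mapsto (\lambda(g_1),\dots,\lambda(g_N))$ globally trivializes $\cV^{abs}$ as $H^1(\Sigma_g \setminus B;\bR_{>0})^\circ \times \bC^N$ independent of $\chi$, so triviality of the determinant line is immediate.

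I do not anticipate any serious obstacle: the substantive content was already packaged into Lemma~\ref{L:TrivialDeterminantBundle}, and the punctured case collapses to an observation about freeness of $\pi_1$. The only minor bookkeeping is verifying that the splitting of Lemma~\ref{L:SES:BasedToUsual} is independent of the auxiliary character extension $\widetilde{\chi}$; this is precisely what was noted immediately after Lemma~\ref{L:ChoosingRelativeTwisted}, where a based twisted cocycle is shown to depend only on the underlying $\pi_1$-cocycle and its values on the $c_i$.
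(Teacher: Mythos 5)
Your proposal is correct and follows essentially the same route as the paper: the paper's proof likewise wedges the evaluation functionals $\gamma_1,\dots,\gamma_n$ on the arcs with either the section from Lemma~\ref{L:TrivialDeterminantBundle} (when $B=\emptyset$) or with evaluation functionals on a free basis of $\pi_1(\Sigma_g\setminus B)$ (when $B\neq\emptyset$). Your extra remarks on the canonicity of the splitting from Lemma~\ref{L:SES:BasedToUsual} are accurate but not a substantive departure.
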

\begin{proof}
When the set $B$ of non-translation like cone points is empty we simply wedge $\gamma_1 \wedge \hdots \wedge \gamma_n$ with the section of $\bigwedge^{2g-1} {\cV}^*$ produced in Lemma \ref{L:TrivialDeterminantBundle}. 

When $B$ is nonempty, $\pi_1(\Sigma_g \setminus B)$ is a free group on $2g + |B| -1$ generators. Let $a_1, \hdots, a_{2g+|B|-1}$ be generators and let $(\alpha_i)_{i=1}^{2g+|B|-1}$ be the elements of $(\cV^{rel})^*$ formed by evaluating a based twisted cocycle on $(a_i)_{i=1}^{2g+|B|-1}$. The desired section is $\gamma_1 \wedge \hdots \wedge \gamma_n \wedge \alpha_1 \wedge \hdots \wedge \alpha_{2g+|B|-1}$. 
\end{proof}

As in Section \ref{S:Proof-OneConePoint}, given $\chi \in H^1(\Sigma_g \setminus B, \bC^\times)^\circ$, let $\omega_\chi$ be the volume form constructed on the fiber over $\chi$ in Lemma \ref{L:TrivialDeterminantBundle2}. For $f \in \mathrm{Mod}_{g, |B|+|T|+|P|}$, define the cocycle $A(f, \chi)$ to be the function 
\[
A(f, \cdot): H^1(\Sigma_g - B, \bC^\times)^\circ \ra \bC^\times
\]
so that
\[ f_* \omega_\chi = A(f, \chi) \omega_{f_* \chi}. \]
Similarly, define $\omega^{abs}_{\chi}$ to be the volume form on fibers produced in Lemma \ref{L:TrivialDeterminantBundle} when $B$ is empty and $\alpha_1 \wedge \hdots \wedge \alpha_{2g+|B|-1}$ otherwise. For $f \in \mathrm{Mod}_{g, |B|+1}$, define the cocycle $A^{abs}(f, \chi)$ to be the function so that
\[ f_* \omega^{abs}_\chi = A^{abs}(f, \chi) \omega^{abs}_{f_* \chi}. \]

\begin{lem}
$A^{abs}$ is a multiplicative cocycle and $A(f, \chi) = A^{abs}(f, \chi) \prod_{i=1}^n \chi(f(c_i) - c_i)$.
\end{lem}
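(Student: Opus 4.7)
The plan is to address the two claims in sequence, both following strategies already used in Section \ref{S:Proof-OneConePoint}. For the multiplicativity of $A^{abs}$, I would run the argument of Corollary \ref{C:PrelimCocycleResult} essentially verbatim in the new setting. An analog of Lemma \ref{L:CocycleIsRational} expresses $A^{abs}(f, \chi)$ as the determinant of the matrix of the induced action of $f$ on the absolute cocycle space $Z^1_\chi(\Sigma_g \setminus B, p; \bC)$ in the basis dual to $(\alpha_i)$. When $B$ is nonempty, $\pi_1(\Sigma_g \setminus B, p)$ is free of rank $2g + |B| - 1$, so this determinant is a Laurent polynomial in the values $\chi(a_j)^{\pm 1}$. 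Since the induced map is an isomorphism for every $\chi$ (the fiber dimension is now constant on the full character variety), this Laurent polynomial is nowhere vanishing on $(\bC^\times)^{2g+|B|-1}$. Lemma \ref{L:NullstellensatzArgument} then forces $A^{abs}(f, \chi) = c_f \prod \chi(a_j)^{n_{j,f}}$ for some $c_f \in \bC^\times$. The assignment $f \mapsto c_f$ is a homomorphism from the relevant mapping class group to $\bC^\times$, which vanishes for $g \geq 3$ by triviality of the abelianization, giving multiplicativity in $\chi$.

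For the factorization of $A(f, \chi)$, I would use the splitting of the short exact sequence in Lemma \ref{L:SES:BasedToUsual} to decompose
\[
Z^1_\chi(\Sigma_g \setminus B, P, p; \bC) \;=\; \bigoplus_{i=1}^n \bC\,\xi_i^\chi \;\oplus\; Z^1_\chi(\Sigma_g \setminus B, p; \bC),
\]
where the absolute summand is embedded via the canonical splitting sending each $c_i$ to $0$. The kernel $\bigoplus_i \bC\,\xi_i^\chi$ is preserved by the action of $f$ on cocycles: it consists exactly of those relative cocycles that vanish on loops at $p$, and this condition is preserved since $f$ sends loops at $p$ to loops at $p$. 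Thus the matrix of the action of $f$ in the basis $\{\xi_i^\chi, \lambda_j^\chi\}$ is block upper triangular. The lower block is the matrix of the action of $f$ on absolute cocycles, which contributes $A^{abs}(f, \chi)$ to the determinant via $\omega^{abs}_\chi$. The upper block is diagonal, with eigenvalues computed from how $f$ moves the arcs $c_i$.

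To identify these eigenvalues, write $f(c_i) = \beta_i \cdot c_i$ in the groupoid $\pi_1(\Sigma_g \setminus B, P)$, where $\beta_i \in \pi_1(\Sigma_g \setminus B, p)$ is the loop $f(c_i) \cdot c_i^{-1}$ based at $p$. Applying the cocycle relation to $\xi_i$ evaluated on $\beta_j c_j$ and using $\xi_i^\chi(\beta_j) = 0$ shows that the action sends $\xi_i$ (for the appropriate character) to a scalar multiple of $\xi_i^\chi$, where the scalar is a value of the character on $\beta_i$. Taking the product of these diagonal scalars produces the factor $\prod_i \chi(f(c_i) - c_i)$ and completes the factorization $A(f, \chi) = A^{abs}(f, \chi) \prod_{i=1}^n \chi(f(c_i) - c_i)$. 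The main obstacle I anticipate is careful reconciliation of pullback versus pushforward conventions for cocycles, characters, and forms; this is needed to verify that the diagonal entry is exactly $\chi(f(c_i) - c_i)$ as claimed, rather than a variant such as $(f_*\chi)(f(c_i) - c_i)$ which differs by precomposing with the action of $f$ on $H_1$.
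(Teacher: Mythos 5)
Your proposal is correct and follows essentially the same route as the paper: multiplicativity of $A^{abs}$ is obtained by rerunning the Nullstellensatz argument of Corollary \ref{C:PrelimCocycleResult} in the free-group setting, and the factorization comes from the block upper-triangular action on the fibers of $\cV^{rel}$ induced by the splitting of Lemma \ref{L:SES:BasedToUsual}, with the relative block computed to be diagonal exactly as you describe (the paper writes $f(c_i) = \eta_i c_i$ and finds $\xi_i(f(c_j)) = \delta_{ij}\chi(\eta_i)$). The pullback-versus-pushforward convention issue you flag at the end is genuine but resolves in the direction you expect, matching the stated formula $\prod_{i=1}^n \chi(f(c_i) - c_i)$.
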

We will refer to $A^{abs}$ as the \emph{absolute part of the cocycle} and $\prod_{i=1}^n \chi(f(c_i) - c_i)$ as the \emph{relative part}. 
\begin{proof}
When $B$ is empty the first claim is Corollary \ref{C:PrelimCocycleResult}. When $B$ is nonempty, the same proof applies, showing that $A^{abs}$ is multiplicative.

We will show the second claim in the case that $B$ is nonempty, with the first case being nearly identical. As in Lemma \ref{L:CocycleIsRational}, $A(f, \chi) = \det(\lambda_i(f(d_j)))_{i,j}$ where $(d_i)_i = (\gamma_i)_{i=1}^n \cup (a_i)_{i=1}^{2g+|B|-1}$ and where $\lambda_i$ is the dual basis of based twisted cocycles. By Lemma \ref{L:SES:BasedToUsual}, the mapping class group acts on fibers of $\cV^{rel}$ by a block upper triangular matrix consisting of two blocks on the diagonal, recording the actions on $\bC^{|P|-1}$ and $Z_\chi^1(\Sigma_g\setminus B, p; \bC)$, respectively. This shows that 
\[ A(f, \chi) = A^{abs}(f, \chi) \det\left( \xi_i(f(c_j)) \right)_{i,j=1}^n. \]

To evaluate $\det\left( \xi_i(f(c_j)) \right)_{i,j=1}^n$, write $f(c_i) = \eta_i c_i$ where $\eta_i \in \pi_1(X, p)$. Then,
\[ \xi_i(f(c_j)) = \xi_i(\eta_j) + \delta_{ij} \chi(\eta_i) = \delta_{ij} \chi(\eta_i)\] 
and the result follows.
\end{proof}


In final preparation for the proof of Theorem \ref{T:main:general}, we require the following result, Corollary \ref{C:zerorestrict}, proved in the Appendix. \vspace{0.5Em}

\noindent\textbf{Corollary \ref{C:zerorestrict}.} {\em 
    Let $\Sigma_g$ be a surface of genus $g \ge 2$, and let $Q = \{p_1, \dots, p_{n+1}\}$ be a set of marked points. Let $Q' = Q \setminus \{p_{n+1}\}$. 
    Let $G \le \Mod(\Sigma_g \setminus Q)$ be a subgroup. Suppose $G$ satisfies the following conditions:
    \begin{enumerate}
        \item There is a framing $\eta$ of $\Sigma_g \setminus Q$ for which $G \le \Mod(\Sigma_g \setminus Q)[\eta]$.
        \item There is a basis $\tau$ of arcs in $\wt{H}^0(Q)$  such that $g[\alpha_i] = [\alpha_i]$ in $H_1(\Sigma_g, Q)$ for all $g \in G$ and $\alpha_i \in \tau$.
        \item There is a non-vanishing vector field $\xi_1$ on $\Sigma_g \setminus Q$ with zero winding number about each $p_i \in Q'$ such that $W_{\xi_1}(g(\beta_i)) = W_{\xi_1}(\beta_i)$ for all $g \in G$ and for all arcs $\beta_i$ forming a basis of $\wt{H}_0(Q')$. 
    \end{enumerate}
    Then the restriction map $H^1(\Mod_{g,n+1}; H_1(\Sigma_g\setminus Q')) \to H^1(G; H_1(\Sigma_g\setminus Q'))$ is zero.
}


\begin{cor}\label{C:MainTool}
When $\Gamma$ is a subgroup of $\mathrm{Mod}_{g, |B| + |T|+|P|}$ described in Theorem \ref{T:main:general}, there is a $\Gamma$-equivariant nonvanishing section of the top exterior wedge of $\cV_\bR^{rel}$. In particular, there is a $\Gamma$-equivariant family of volume forms on the fibers of $\cV^{rel}$.
\end{cor}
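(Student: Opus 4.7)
The plan is to show that the cocycle $A(f, \chi)$ restricted to $\Gamma$ is a coboundary in the sense of dynamics: once we produce a function $F: H^1(\Sigma_g \setminus B, \bR_{>0})^\circ \to \bR_{>0}$ satisfying $A(f, \chi) = F(f_*\chi)/F(\chi)$ for all $f \in \Gamma$, the section $F(\chi) \omega_\chi$ of the top exterior wedge of $(\cV^{rel})^*$ will be nonvanishing and $\Gamma$-equivariant. I would handle the factorization $A = A^{abs} \cdot A^{rel}$ from the preceding lemma by treating the two factors separately.

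The relative factor
\[
A^{rel}(f, \chi) = \prod_{i=1}^n \chi(f(c_i) - c_i)
\]
will turn out to be trivial on $\Gamma$: the arcs $c_1, \dots, c_n$ form a tree among the integral zeros $P$, and condition (1) of Theorem \ref{T:main:general} requires $\Gamma$ to fix their relative homology classes. Hence each loop $f(c_i) c_i^{-1}$ is nullhomologous in $H_1(\Sigma_g \setminus B, P)$; by the long exact sequence of the pair (using $H_1(P) = 0$), $H_1(\Sigma_g \setminus B)$ injects into $H_1(\Sigma_g \setminus B, P)$, so $f(c_i) c_i^{-1}$ is also nullhomologous in $H_1(\Sigma_g \setminus B)$ and $\chi$ evaluates to $1$ on it. Thus $A^{rel} \equiv 1$ on $\Gamma$.

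For the absolute factor, I would first extend Corollary \ref{C:PrelimCocycleResult} to the present multi-cone-point setting. The Nullstellensatz argument of Lemma \ref{L:NullstellensatzArgument}, together with Weierstrass preparation at the trivial character and the vanishing of the abelianization of the mapping class group (for $g \ge 3$), shows that $A^{abs}(f, \cdot)$ is a monomial in the character coordinates, hence multiplicative in $\chi$. Via the exponential identification $\bR_{>0} \cong \bR$ and the duality $\Hom(H^1(\Sigma_g \setminus B; \bR), \bR) \cong H_1(\Sigma_g \setminus B; \bR)$, this realizes $A^{abs}$ as a crossed homomorphism and hence a class in $H^1(\Mod; H_1(\Sigma_g \setminus B; \bR))$. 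The key input is then Corollary \ref{C:zerorestrict} of the Appendix: with $Q' = B$ and $Q$ enlarged by one distinguished point (followed by pulling back along the forgetful map from $\Mod_{g, |B|+|T|+|P|}$), the corollary ensures that the restriction of this $H^1$-class to any subgroup satisfying its three hypotheses is zero. These hypotheses correspond respectively to framed mapping class group membership of $\Gamma$, conditions (1) and (2) of Theorem \ref{T:main:general} together (which provide a tree of arcs to the full set of marked points whose relative homology classes are fixed), and condition (3) of Theorem \ref{T:main:general} (preserving the winding numbers of arcs within $S$).

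The main obstacle I anticipate is the careful matching in the previous paragraph: ensuring that the three classes of cone points $P$, $T$, and $B$ can be accommodated simultaneously in the framework of Corollary \ref{C:zerorestrict}, and confirming that the cohomology class represented by $A^{abs}$ is precisely in the domain to which the corollary applies (rather than being annihilated for an unrelated reason). Once the restriction of $A^{abs}$ to $\Gamma$ is exhibited as a coboundary, multiplication by the trivial cocycle $A^{rel}$ yields $A|_\Gamma$ as a coboundary, producing the function $F$ and the desired $\Gamma$-equivariant volume form $F(\chi) \omega_\chi$ on the fibers of $\cV^{rel}$.
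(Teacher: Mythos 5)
Your proposal is correct and follows essentially the same route as the paper: it factors the cocycle into its absolute and relative parts, kills the relative part via the preserved relative homology classes of the arcs $c_i$ (using injectivity of $H_1(\Sigma_g\setminus B)\to H_1(\Sigma_g\setminus B,P)$, which is exactly how the paper's computation $\det(\xi_i(f(c_j))) = \prod\chi(\eta_i)$ gets trivialized), and kills the absolute part by establishing multiplicativity à la Corollary \ref{C:PrelimCocycleResult} and invoking Corollary \ref{C:zerorestrict} with $Q' = B$. The only cosmetic difference is that the paper treats $B = \emptyset$ as a separate case, handling the absolute part there via Theorem \ref{T:Main:InvariantForms} rather than through the Appendix (whose standing assumption is $n \ge 2$), whereas you funnel everything through Corollary \ref{C:zerorestrict}; for $B = \emptyset$ one should instead cite the classical $n=0$ statement or Theorem \ref{T:Main:InvariantForms} as the paper does.
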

\begin{proof}
The final claim will follow from the first one since $\cV^{rel}$ is the complexification of $\cV^{rel}_\bR$ (see Lemma \ref{lemma:taco}). 

The desired section exists if and only if the cocycle $A(f, \chi)$ restricts to $\Gamma$ as a coboundary (see the discussion after Lemma \ref{L:LinearAlgebra}). In the case that $B$ is empty, $\Gamma$ is the subgroup of a framed mapping class group that preserves each $c_i$ as an element of relative homology. The absolute part of the cocycle restricts to a coboundary by Theorem \ref{T:Main:InvariantForms} and the relative part of the cocycle restricts to a coboundary by its definition. 

When $B$ is nonempty, $\Gamma$ is defined as a subgroup of $\mathrm{Mod}_{g, |B|+|T|+|P|}$ preserving the following set of data:
\begin{enumerate}
    \item $\Gamma$ preserves the distinguished framing $\eta$ associated to a dilation surface.
    \item $\Gamma$ preserves the relative homology classes of $(c_i)_{i=1}^n$
    \item Let $\tau$ be a set of arcs connecting a fixed point in $B$ to each point in $P$. $\Gamma$ preserves the relative homology classes of $\tau$. 
    \item There is a nonvanishing vector field $\xi_0$ with vanishing winding number around points in $P$ and a basis $\tau'$ of $\wt{H}_0(P)$ so that $\Gamma$ preserves the relative winding numbers of elements in $\tau'$ as computed with respect to $\xi_0$. 
\end{enumerate}
The relative part of the cocycle restricts to a coboundary by the second condition. The absolute part of the cocycle is a change of relative winding number cocycle $\overline{C(\xi_1)}$ by Theorem \ref{T:MainMCGCohomology} for a nonvanishing vector field $\xi_1$ whose winding number around the points in $P$ vanishes. This restricts to a coboundary on $\Gamma$ by Corollary \ref{C:zerorestrict}.
\end{proof}

\begin{proof}[Proof of Theorem \ref{T:main:general}:]
Forming the product measure using the $\Gamma$-equivariant family of measures from Corollary \ref{C:MainTool} and the $\Gamma$-invariant Lebesgue measure on the base of $\cV^{rel}(\kappa)$ produces a $\Gamma$-invariant Lebesgue class measure on $\cV^{rel}(\kappa)$. It is $\mathrm{SL}(2, \bR)$-invariant by construction. By Theorem \ref{T:PeriodMapWithTransConePoints}, the pullback of this volume form under the period map defines a differential form that determines a $\Gamma \times \mathrm{SL}(2, \bR)$-invariant measure on $\wt{\cD_s}(\kappa)$. The form's $\Gamma$-invariance allows it to descend to the desired form on the quotient of $\wt{\cD_s}(\kappa)$ by $\Gamma$.
\end{proof}

\section{Proof of Theorem \ref{T:area}}\label{S:ProofOfAreaCondition}

Let $\cD_s \ra \cD$ be the real line bundle of scaled dilation surfaces over a stratum of unscaled dilation surfaces. By Theorem \ref{T:main:general}, after passing to an explicit cover $\wt{\cD}$ and letting $\wt{\cD_s}$ be the pullback of $\cD_s$, $\wt{\cD_s}$ admits an $\mathrm{SL}(2, \bR)$-invariant Lebesgue class measure $\nu$. After additionally passing to a double cover, one may assume that the bundle $\wt{\cD_s} \ra \wt{\cD}$ is trivial and hence that $\wt{\cD_s}$ can be identified with $\wt{\cD} \times \bR_{>0}$. Let $r$ be the projection to the $\bR_{>0}$ factor. Let $\mu$ be Lebesgue measure on $\wt{\cD}$. Up to altering $\mu$, disintegrating $\nu$ allows us to write it as $\mu \otimes r^n dr$ where $n$ is an integer. Finally, the action by $g \in \mathrm{SL}(2, \bR)$ on $\wt{\cD}_s$ sends a point $(x, c)$ to $(g\cdot x, A(g,x) \cdot c)$ where $A: \mathrm{SL}(2, \bR) \times \wt{\cD} \ra \bR_{>0}$ is a cocycle. Recall that $A(g, x)$ is a \emph{coboundary} if there is a measurable function $F: \cD \ra \bR_{>0}$ so that $\frac{F(g \cdot x)}{F(x)} = A(g,x)$ for all $g \in \mathrm{SL}(2, \bR)$ and $x \in \wt{\cD}$.

\begin{lem}
There is an $\mathrm{SL}(2, \bR)$ invariant measure on $\wt{\cD}$ if and only if $A$ is a coboundary.
\end{lem}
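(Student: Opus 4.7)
The plan is to work in the trivialization $\wt\cD_s \cong \wt\cD \times \bR_{>0}$ and translate the $\mathrm{SL}(2,\bR)$-invariance of $\nu = \mu \otimes r^n dr$ into a transformation law that couples $\mu$ with the cocycle $A$. First I would write out $g_* \nu = \nu$ using the formula $g \cdot (x, c) = (gx, A(g,x) c)$, then perform the change of variables $c' = A(g, x) c$ in the fiber integral, which introduces a Jacobian factor of $A(g,x)^{-(n+1)}$. Testing the resulting identity against product functions $f(x,c) = \phi(x)\psi(c)$ should produce the transformation law
\[
\int \phi(gx)\, A(g, x)^{-(n+1)}\, d\mu(x) = \int \phi(x)\, d\mu(x) \qquad \text{for all } g \in \mathrm{SL}(2,\bR).
\]

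With this in hand, both directions of the equivalence follow formally. For the easy direction, if $A(g, x) = F(gx)/F(x)$ for some measurable $F \colon \wt\cD \to \bR_{>0}$, I would propose $\tilde\mu := F^{n+1}\mu$ as the candidate and verify invariance by applying the transformation law to $\phi := f \cdot F^{n+1}$ and using the identity $F(gx)^{n+1} A(g,x)^{-(n+1)} = F(x)^{n+1}$. For the other direction, given an invariant Lebesgue-class measure I would write it as $\tilde\mu = h\mu$ for some positive measurable $h$, combine its invariance with the transformation law to obtain the functional equation
\[
h(gx) = A(g,x)^{n+1}\, h(x) \quad \mu\text{-a.e.,}
\]
and then set $F := h^{1/(n+1)}$ (well-defined because $h > 0$) to exhibit $A$ as a coboundary.

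The main step requiring care is the disintegration and change-of-variables derivation of the transformation law; everything afterwards is algebraic manipulation using the cocycle identity for $A$. The only real obstacle is the degenerate case $n + 1 = 0$, in which $r^{-1}dr$ is Haar measure on $\bR_{>0}$ and is automatically preserved by any multiplicative action in the fiber, so $\mathrm{SL}(2,\bR)$-invariance of $\nu$ imposes no condition on $A$ at all and the $(n+1)$-th root argument breaks down. In the geometric setup this case should not arise, since the value of $n$ is pinned down by the scaling behavior of $\nu$ under the diagonal subgroup and the prescribed area-scaling relation, but it may warrant a brief separate verification.
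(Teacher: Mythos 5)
Your argument is correct and takes essentially the same route as the paper: both translate $\mathrm{SL}(2,\bR)$-invariance of $\nu$ into the statement that the Radon--Nikodym cocycle of $\mu$ is (a power of) $A$, and then observe that a Lebesgue-class invariant measure on $\wt{\cD}$ exists iff that cocycle is a coboundary, iff $A$ is. You are in fact somewhat more careful than the paper's one-line proof, which suppresses the exponent $n+1$ and does not mention the degenerate case $n=-1$ (correctly excluded, as you note, by the scaling behavior of $\nu$ in the fiber direction).
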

\begin{proof}
Since $\nu$ is invariant, $\frac{d(g_*\mu)}{d\mu}A(g, x) = 1$ for all $g \in \mathrm{SL}(2, \bR)$ and $x \in \wt{\cD}$. An invariant measure on $\wt{\cD}$ exists if and only if the Radon-Nikodym cocycle $\frac{d(g_*\mu)}{d\mu}$ is a coboundary. This occurs if and only if $A$ is a coboundary. 
\end{proof}

If we fix any norm $| \cdot |$ on the fibers of $\wt{\cD_s} \ra \wt{\cD}$, then if $X$ is any preimage of $x \in \wt{\cD}$ on $\wt{\cD}_s$, then $A(g,x) = \frac{|g \cdot X|}{|X|}$. Without loss of generality this norm is smooth and $\mathrm{SO}(2)$-invariant. Say that an \emph{area-function} is a map $a: \wt{\cD}_s \ra \bR_{>0}$ that is $\mathrm{SL}(2, \bR)$-invariant and so that for any real number $\lambda$, $a(\lambda \cdot x) = \lambda^2 a(x)$. The following statement is a stronger form of Theorem \ref{T:area}.

\begin{thm}
There is an $\mathrm{SL}(2, \bR)$ invariant measure on an open invariant subset of $\wt{\cD}$, if and only if there is an area function on the subset. 
\end{thm}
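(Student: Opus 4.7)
The plan is to reduce the theorem to the preceding lemma by establishing a direct correspondence between area functions on $\wt{U_s}$ and coboundary representations of the cocycle $A$ on $U$, where $U \subseteq \wt{\cD}$ is the open invariant subset in question and $\wt{U_s}$ is its preimage in $\wt{\cD_s}$. The preceding lemma (and an inspection of its proof) gives, applied to $U$, that $U$ admits an $\mathrm{SL}(2,\bR)$-invariant measure if and only if the restriction $A|_U$ of the cocycle is a measurable coboundary. So it suffices to prove that an area function exists on $\wt{U_s}$ if and only if $A|_U$ is a coboundary.

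Using the trivialization $\wt{\cD_s} \cong \wt{\cD} \times \bR_{>0}$, any candidate area function $a:\wt{U_s}\to\bR_{>0}$ is determined by its values on the unit section. Indeed, the homogeneity condition $a(\lambda \cdot x) = \lambda^2 a(x)$ forces $a(x,c) = c^2 b(x)$, where $b(x) := a(x,1)$. Given the action $(x,c)\mapsto(gx, A(g,x)c)$, the $\mathrm{SL}(2,\bR)$-invariance of $a$ translates directly into the identity
\[
A(g,x)^2 \, b(gx) \;=\; b(x),
\]
i.e., $b(gx)/b(x) = A(g,x)^{-2}$. Setting $F(x) := b(x)^{-1/2}$, this is precisely the coboundary equation $A(g,x) = F(gx)/F(x)$. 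Conversely, if $A|_U = F(gx)/F(x)$ for a measurable $F:U\to\bR_{>0}$, then $a(x,c) := c^2 F(x)^{-2}$ is a measurable, $\mathrm{SL}(2,\bR)$-invariant, and $\lambda^2$-homogeneous function on $\wt{U_s}$, hence an area function.

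Combining these two bijections gives the theorem: an $\mathrm{SL}(2,\bR)$-invariant measure on $U$ exists if and only if $A|_U$ is a coboundary, which in turn holds if and only if an area function exists on $\wt{U_s}$. The argument is essentially formal once the bookkeeping of the trivialization and the cocycle is set up, so there is no real technical obstacle; the main thing to verify is that the earlier lemma localizes to an arbitrary open invariant subset (which it does verbatim, since both the Radon--Nikodym argument and the definition of coboundary are pointwise/local).
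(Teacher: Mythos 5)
Your proof is correct and is essentially the paper's argument: the paper sets up the same bijection $F \leftrightarrow a$ via $a(X) = |X|^2/F(x)^2$ and $F(X) = |X|/\sqrt{a(X)}$, using a fiberwise norm where you use the trivialization coordinate $c$, which amounts to the same thing. Your explicit remark that the preceding lemma localizes verbatim to open invariant subsets is a point the paper leaves implicit, but there is no substantive difference in approach.
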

\begin{proof}
If $F$ is as in the definition of ``coboundary", then $\frac{|X|^2}{F(X)^2}$ is the desired area function. Conversely, if $a$ is the area function, then set $F(X) = \frac{|X|}{\sqrt{a(X)}}$.
\end{proof}

\begin{cor}\label{C:ReebLocus}
Consider the open subset of $\wt{\cD}$ consisting of surfaces with Reeb tori not all of whose boundary saddle connections are parallel. This subset is equipped with an $\mathrm{SL}(2, \bR)$ invariant measure. 
\end{cor}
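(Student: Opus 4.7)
By the preceding theorem, it suffices to exhibit an area function on (the pullback to $\wt{\cD_s}$ of) the indicated open invariant subset, i.e.\ an $\mathrm{SL}(2,\bR)$-invariant measurable function $a: \wt{\cD_s} \to \bR_{>0}$ satisfying $a(\lambda\cdot X) = \lambda^2 a(X)$ for all $\lambda \in \bR_{>0}$. My plan is to construct such $a$ out of the $\bC$-valued holonomies of the boundary saddle connections of Reeb tori.

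Recall that a scaled dilation surface $X \in \wt{\cD_s}$ is equipped with a (genuine, not just up to positive scaling) twisted holomorphic $1$-form $\omega$. Each boundary saddle connection $\gamma$ of a Reeb torus $T \subseteq X$ has a holonomy vector $v(\gamma) = \int_{\wt\gamma}\omega \in \bC \cong \bR^2$, where $\wt\gamma$ is a lift to the universal cover. The first key observation is that under $g \in \mathrm{SL}(2,\bR)$ these holonomies transform by the standard linear action on $\bR^2$, and under the scaling by $\lambda \in \bR_{>0}$ the form $\omega$ is multiplied by $\lambda$, so each holonomy is multiplied by $\lambda$. Hence for any two boundary saddle connections $\gamma_i, \gamma_j$ of the same Reeb torus, with holonomies $v_i, v_j$, the quantity $|v_i \wedge v_j|$ (the magnitude of the standard $\mathrm{SL}(2,\bR)$-invariant symplectic pairing on $\bR^2$) is $\mathrm{SL}(2,\bR)$-invariant and scales by $\lambda^2$.

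With these building blocks in hand, I would define
\[
a(X) = \sum_T \sum_{i<j} |v_i(X) \wedge v_j(X)|,
\]
where $T$ ranges over the (finitely many) Reeb tori of $X$ and $(i,j)$ ranges over unordered pairs of distinct boundary saddle connections of $T$. The function inherits the transformation properties above term-by-term. By hypothesis, on the indicated open subset at least one Reeb torus has two non-parallel boundary saddle connections, so $|v_i \wedge v_j| > 0$ for that pair and hence $a(X) > 0$.

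The main obstacle I expect is measurability. The combinatorial type of the Reeb decomposition (the number of Reeb tori, the number of their boundary saddle connections, and the relative homology classes these represent on the marked surface coming from $\wt{\cD}$) is not globally constant, but is constant on the strata of a locally finite constructible decomposition of the subset. On each such stratum the $v_i$ are (real-)analytic functions of period coordinates via Theorem \ref{T:PeriodMapWithTransConePoints}, so $a$ is continuous there; patching across strata yields a globally measurable function. Applying the preceding theorem to $a$ produces the desired $\mathrm{SL}(2,\bR)$-invariant Lebesgue class measure on the subset.
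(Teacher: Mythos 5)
Your proposal follows essentially the same route as the paper: the paper's proof is the one-liner that for $v,w$ the periods of two generically non-parallel boundary saddle connections of Reeb tori, $|v \wedge w|$ is the desired area function, and your construction is this same idea made slightly more robust (summing $|v_i\wedge v_j|$ over pairs avoids choosing a distinguished pair, and you supply the measurability discussion the paper elides). One small mismatch to fix: you only sum over pairs of boundary saddle connections belonging to the \emph{same} Reeb torus, whereas the defining condition of the open set (and the paper's choice of $v,w$) only guarantees a non-parallel pair among the boundary saddle connections of \emph{all} the Reeb tori collectively; extending your double sum to all such pairs, regardless of which torus they bound, restores positivity of $a$ on the whole subset.
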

\begin{proof}
The typical surface in this locus contains Reeb tori with simple boundaries. Let $v$ and $w$ be the periods of any two generically non-parallel such boundary saddle connections. Then $|v \wedge w|$ is the desired area function. 
\end{proof}

\begin{rem}
Up to null sets, the complement of the open subset of $\wt{\cD}$ considered in Corollary \ref{C:ReebLocus} is the triangulable locus. So the issue of whether $\wt{\cD}$ has an $\mathrm{SL}(2, \bR)$-invariant fully supported Lebesgue class measure comes down to whether the triangulable locus admits an area function.
\end{rem}

\begin{rem}
The strategy to produce an invariant measure for scaled dilation surfaces will not work in the unscaled case, at least not in the case where all singularities are translation like. To see this, we note that Ghazouani \cite{Ghazouani-MCGDynamicsAndHolonomy} has shown that the Johnson kernel, which belongs to both the framed mapping class group and the Torelli group, acts on $Z := Z^1_\chi(\bR)$, where $\chi \in H^1(\Sigma_g, \bR_{>0})$ by a dense subgroup of $G = \mathrm{SL}(Z)$. Imitating the strategy of producing an invariant measure in the scaled case would therefore produce a $G$-invariant measure on 
\[ \{ [v_1, v_2] \in (Z\times Z)/\bR_{>0} : \text{$v_1$ and $v_2$ are not collinear} \}  \]
which is a homogeneous space for the action of $G$, i.e. the space can be written as $G/H$ where $H$ is a subgroup of $G$. But $G/H$ admits a $G$-invariant measure if and only if $H$ acts on the Lie algebra of $G$ by matrices of determinant one. One may check that, for genus greater than one, this does not hold for every matrix in $H$. 
\end{rem}

\appendix
   
\section{Relative framings and the twisted cohomology of the mapping class group}\label{appendix:A} 
The purpose of this section is to establish the cohomological results used in the body of the paper. The main results are Theorem \ref{T:MainMCGCohomology} stated just below, and Proposition \ref{P:ArcFramedMCG}, recalled in Section \ref{SS:Aarcframed}.

\para{Convention} Throughout this section, $A$ denotes either $\bZ$ or else a field of characteristic zero. If left unspecified, coefficients will be taken as such $A$.

\begin{thm}\label{T:MainMCGCohomology}
For $g \geq 2$ and $n \ne 1$,   $H^1(\Mod_{g,n+1}, H_1(\Sigma_{g,n};A)) \cong A$.
Moreover, the change of winding number functional $\overline{C(\xi_0)}$ described in Lemma \ref{L:barCpsi} is a well-defined cocycle that generates $H^1(\Mod_{g,{n+1}}, H_1(\Sigma_{g,n};A))$.
\end{thm}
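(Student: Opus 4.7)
My plan is to combine an explicit cocycle construction with a five-term spectral-sequence argument that generalizes Morita's computation for $\Mod_{g,1}$ \cite{Morita-FamiliesOfJacobians}. The three ingredients are: (i) construct the cocycle $\overline{C(\xi_0)}$ explicitly from a winding number functional, (ii) verify that it is nonzero in cohomology via evaluation on Dehn twists, and (iii) prove the upper bound $\dim H^1 \le 1$ using the Birman exact sequence.

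\textbf{Cocycle construction.} Given a non-vanishing vector field $\xi_0$ on $\Sigma_{g,n}$, form the winding number class $W_{\xi_0} \in H^1(UT\Sigma_{g,n}; A)$. For any $f \in \Mod_{g,n+1}$, the class $(f^* - 1)W_{\xi_0}$ vanishes on the oriented fiber class $\zeta$ (since the mapping class group action preserves $\zeta$), hence lifts from $H^1(\Sigma_{g,n}; A)$. Poincar\'e--Lefschetz duality on the punctured surface then identifies this with a class in $H_1(\Sigma_{g,n}; A)$, defining $\overline{C(\xi_0)}(f)$. The crossed-homomorphism identity follows from the algebraic identity $(fg)^* - 1 = g^*(f^*-1) + (g^*-1)$ together with the $\Mod_{g,n+1}$-equivariance of duality. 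Nontriviality is immediate from the twist-linearity formula \eqref{eqn:twistlin}: for a simple closed curve $\gamma$, $\overline{C(\xi_0)}(T_\gamma) = W_{\xi_0}(\gamma)\,[\gamma] \in H_1(\Sigma_{g,n}; A)$, and choosing non-separating $\gamma$ with $W_{\xi_0}(\gamma) \neq 0$ produces a non-coboundary.

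\textbf{Upper bound via Birman.} Apply the Birman exact sequence
\[
1 \to \pi_1(\Sigma_{g,n}) \to \Mod_{g,n+1} \to \Mod_{g,n} \to 1
\]
and its five-term sequence
\[
0 \to H^1(\Mod_{g,n}; H) \to H^1(\Mod_{g,n+1}; H) \to H^1(\pi_1(\Sigma_{g,n}); H)^{\Mod_{g,n}} \to H^2(\Mod_{g,n}; H),
\]
with $H := H_1(\Sigma_{g,n}; A)$. The point-push kernel acts trivially on $H$, so $H^1(\pi_1(\Sigma_{g,n}); H) \cong \End_A(H)$. For $n = 0$, this is Morita's setup: $H^1(\Mod_g; H_1(\Sigma_g; A)) = 0$ and $\End(H_1(\Sigma_g))^{\Mod_g} = A \cdot \Id$ by Schur's lemma (using irreducibility of $H_1(\Sigma_g;\bQ)$ as an $\mathrm{Sp}(2g,\bZ)$-module). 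For $n \ge 2$, one must control both $H^1(\Mod_{g,n};H)$ and $\End_A(H)^{\Mod_{g,n}}$, and the vanishing of the former should follow by an analogous extension of Morita's arguments.

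\textbf{Main obstacle.} The hard part is the case $n \ge 2$: because $H_1(\Sigma_{g,n}; A)$ is a non-split extension of $H_1(\Sigma_g; A)$ by the reduced permutation module on the $n$ punctures, $\End_A(H)^{\Mod_{g,n}}$ has rank strictly greater than one. One must show that the image of $H^1(\Mod_{g,n+1}; H)$ inside this space of invariants is exactly the rank-one submodule spanned by the restriction of $\overline{C(\xi_0)}$ to the point-push subgroup (which by a calculation analogous to Lemma \ref{L:PointPush} is a specific nonzero scalar multiple of the inclusion $\pi_1 \twoheadrightarrow H_1$). This amounts to computing the differential $\End_A(H)^{\Mod_{g,n}} \to H^2(\Mod_{g,n};H)$ and identifying its kernel as one-dimensional; alternatively, one can carry out a direct Trapp--Furuta-style analysis showing every cocycle is cohomologous to a change-of-winding-number cocycle. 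The exclusion of $n = 1$ presumably reflects the degeneracy that $H_1(\Sigma_{g,1};A) \cong H_1(\Sigma_g;A)$ as an $\mathrm{Sp}(2g,\bZ)$-module (the loop around the single puncture being homologically trivial), which may allow an additional invariant endomorphism to survive the differential.
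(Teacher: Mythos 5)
Your skeleton — the Birman exact sequence $1 \to \pi_1\Sigma_{g,n} \to \Mod_{g,n+1} \to \Mod_{g,n} \to 1$, its five-term sequence, and an explicit change-of-winding-number cocycle — is exactly the paper's final step, but the inputs you need are either unproven or mis-stated, and two of them are where essentially all the work lies. Your ``main obstacle'' rests on a false premise: $\End_A(H)^{\Mod_{g,n}}$ has rank exactly one for all $n$, not rank greater than one. Writing $K \le H$ for the trivial submodule spanned by the puncture classes, one has $H^{\Mod_{g,n}} = K$, so an equivariant $\phi$ preserves $K$ and induces a scalar on $H/K \cong H_1(\Sigma_g)$ by Schur; the difference $\phi - c\cdot\Id$ is then an equivariant map $H \to K$, which vanishes because $(H^*)^{\Mod_{g,n}} = 0$ (every functional is moved by some Dehn twist). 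This is Corollary \ref{C:EquivariantH} via Lemma \ref{L:LinAlgLemma}; the non-splitness you invoke is precisely what keeps the invariants small, not what makes them large. So there is no differential to analyze there, and the genuine bottleneck is the term you dismiss in one clause: the vanishing $H^1(\Mod_{g,n}; H_1(\Sigma_{g,n})) = 0$ for $n \ge 2$. This is not a routine extension of Morita. The paper proves it (Proposition \ref{P:puncturevanish}) by first computing $H^1(\Mod_g^b; H_g^b) \cong A^b$ in the boundary-component setting — upper bound via a different Birman sequence whose kernel is $\pi_1$ of the unit tangent bundle, lower bound via a connecting map out of $H^1(U_g^b, \tilde P)$ — and then checking that all $b$ generators restrict injectively to the free abelian group of boundary twists; that computation occupies most of the appendix. (The exclusion of $n=1$ is exactly the failure of this vanishing: for $n=1$ the first term is Morita's $A$, not $0$.)

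Separately, your cocycle is valued in the wrong module. The class $(f^*-1)W_{\xi_0}$ lies in $H^1(\Sigma_{g,n};A)$, which is Lefschetz-dual to the \emph{relative} group $H_1(\Sigma_g, Q')$, not to $H_1(\Sigma_{g,n})$; for $n \ge 2$ these are non-isomorphic $\Mod_{g,n}$-modules (one has a trivial submodule of rank $n-1$, the other has no invariants). To produce an element of $H_1(\Sigma_{g,n}) \cong H_1(\Sigma_g,Q')^*$ one must evaluate the change of winding number on \emph{arcs} joining the marked points, i.e.\ use the relative winding number function $W^+_{\xi_0}$, and this is well defined only when $\xi_0$ has zero winding number about each of the first $n$ points (Proposition \ref{prop:ewnf}, Lemma \ref{L:barCpsi}) — a hypothesis absent from your construction. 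Finally, evaluating on a single Dehn twist cannot certify non-triviality: a coboundary $\delta(y)$ also sends $T_\gamma$ to $\pm\langle \gamma, y\rangle[\gamma]$, so a nonseparating $\gamma$ with $W_{\xi_0}(\gamma)\neq 0$ proves nothing by itself. The paper instead restricts to the point-push subgroup, where all coboundaries of $\Mod_{g,n+1}$ vanish identically and the cocycle is computed to equal the nonzero invariant $\chi(\Sigma_{g,n+1})\cdot\Id \in \End_A(H)^{\Mod_{g,n}}$.
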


\begin{rem}
    Here is a brief description of the change of winding number functional $\overline{C(\xi_0)}$ constructed in Lemma \ref{L:barCpsi}. Let $\xi_0$ be a non-vanishing vector field on $\Sigma_{g,n+1}$ having zero winding number around the set $P$ of the first $n$ marked points. Following Proposition \ref{prop:ewnf}, $\xi_0$ defines a relative winding number function $W_{\xi_0}^+$ defined on the set of simple closed curves on $\Sigma_{g,n+1}$ along with the set of arcs on $\Sigma_{g,n+1}$ with both endpoints at $P$. Given a mapping class $f \in \Mod_{g,n}$, it is shown in Lemma \ref{L:barCpsi} that the change in winding number $W_{\xi_0}^+((f_*-1)\cdot)$ is a well-defined functional on $H_1(\Sigma_{g},P)$, which by Lefschetz duality corresponds to an element of the coefficient module $H_1(\Sigma_{g,n}) = H_1(\Sigma_g \setminus P)$. 
\end{rem}

\begin{rem}
    The case $n = 0$ is a classical result. The calculation $H^1(\Mod_{g,1}, H_1(\Sigma_{g};A)) \cong A$ was obtained by Morita (in the case $A = \bZ$) in \cite{Morita-FamiliesOfJacobians}, and the description of a generating cocycle appeared in the work of Furuta (as recorded in \cite[p. 569]{morita-secondary}); see also the work of Trapp \cite{trapp}. In the body of the appendix we will assume $n \ge 2$ throughout.
\end{rem}

While our main results are formulated for ease of use in the main body of the text, the proof strategy takes a somewhat different path. Namely, it is technically easier to work in the setting of surfaces with boundary components, which we will do through the first two subsections. We notate boundary components with superscripts, so that $\Mod_g^b$ denotes the mapping class group of the surface $\Sigma_g^b$ with $b$ boundary components; by definition such mapping classes fix the boundary pointwise. 

Our first objective (Theorem \ref{T:MCGbdrycohom}) is to understand cohomology in the setting of boundary components, with coefficients in the module 
\[
H_g^b:= H_1(\Sigma_g^b).
\]
For later use, it is important to have an explicit basis for this space. This will arise via a connecting map $C: H^1(U_g^b, \tilde P) \to Z^1(\Mod_g^b; H_g^b)$; here $U_g^b$ is the unit tangent bundle of the surface $\Sigma_g^b$, and $\tilde P$ is a collection of $b$ points, one on each boundary component (this is discussed further in Section \ref{SSS:lower}).

\begin{thm}\label{T:MCGbdrycohom}
    For $g \ge 3$ and $b\ge 3$, there is an isomorphism 
    \[
    H^1(\Mod_g^{b}; H_g^b) \cong A^{b}.
    \]
    At the cocycle level, this is realized by an isomorphism
    \[
    C: H^1(U_g^b, \tilde P) \to Z^1(\Mod_g^b; H_g^b).
    \]
\end{thm}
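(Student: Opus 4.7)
Given $\psi \in H^1(U_g^b, \wt P)$ and $f \in \Mod_g^b$, I would represent $f$ by a diffeomorphism that is the identity on a collar of $\partial \Sigma_g^b$, so the induced action on $\partial U_g^b$ is trivial and $f$ fixes $\wt P$ pointwise. Then $(f^*-1)\psi$ restricts to zero in $H^1(\partial U_g^b, \wt P)$, so the long exact sequence of the triple $(U_g^b, \partial U_g^b, \wt P)$ provides a unique lift to $H^1(U_g^b, \partial U_g^b)$. Lefschetz duality for the compact $3$-manifold $U_g^b$ identifies this group with $H_2(U_g^b)$, and since a surface with boundary has trivializable tangent bundle, $U_g^b \cong \Sigma_g^b \times S^1$ and K\"unneth yields $H_2(U_g^b) \cong H_1(\Sigma_g^b) = H_g^b$. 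The resulting $C(\psi) \colon \Mod_g^b \to H_g^b$ satisfies the cocycle identity by associativity of the $\Mod_g^b$-action on relative cohomology.

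\textbf{Step 2: The computation $H^1(\Mod_g^b; H_g^b) \cong A^b$.} I would argue by induction on $b$. The base case $b = 1$ follows from Morita's theorem $H^1(\Mod_{g,1}; H_1(\Sigma_g)) \cong A$ combined with the Hochschild--Serre five-term sequence for the central extension $1 \to \bZ \to \Mod_g^1 \to \Mod_{g,1} \to 1$: the boundary Dehn twist acts trivially on $H_1$, and the absence of $\Mod_{g,1}$-invariants in $H_1(\Sigma_g)$ kills the $H^1(\bZ; H_1)^{\Mod_{g,1}}$ term. For the inductive step, pass from $b-1$ to $b$ via the boundary-capping extension
\[
1 \to \bZ \to \Mod_g^b \to \Mod(\Sigma_g^{b-1}, p) \to 1,
\]
combined with the Birman extension $1 \to \pi_1(\Sigma_g^{b-1}, p) \to \Mod(\Sigma_g^{b-1}, p) \to \Mod_g^{b-1} \to 1$. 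The decomposition $H_g^b \cong H_g^{b-1} \oplus A$ as a $\Mod(\Sigma_g^{b-1}, p)$-module (with trivial action on the $A$-summand spanned by the capped boundary class) accounts for the additional $A$-summand appearing in cohomology at each step.

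\textbf{Step 3: Matching $C$ with the abstract isomorphism.} I would construct $b$ explicit relative winding number classes $\psi_1, \dots, \psi_b \in H^1(U_g^b, \wt P)$, one naturally associated to each boundary component and distinguished by their values on the fiber loops above each $\partial_i$, then verify that the cocycles $C(\psi_i)$ represent a basis for $H^1(\Mod_g^b; H_g^b)$. The remaining classes of $H^1(U_g^b, \wt P)$, being either $\Mod_g^b$-invariant (so producing the zero cocycle) or mapping to coboundaries in $B^1(\Mod_g^b; H_g^b)$, are absorbed into the kernel of the induced map on cohomology. This extends the Trapp--Furuta strategy already employed for the classical case of $\Mod_{g,1}$.

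\textbf{Main obstacle.} The main difficulty is controlling the Hochschild--Serre transgression in Step 2, particularly the large contribution of the Birman kernel $\pi_1(\Sigma_g^{b-1}, p)$. As $\pi_1$ is free of rank $2g + b - 2$, $H^1(\pi_1; H_g^b)$ is substantial, and one must show, via a point-pushing computation analogous to Lemma \ref{L:PointPush}, that nearly all of this contribution is killed upon descent to $\Mod_g^b$, leaving exactly the single $A$-summand that grows $H^1$ by one in passing from $b - 1$ to $b$.
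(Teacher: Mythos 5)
Your Step 1 is essentially the paper's construction of $C$ (the connecting homomorphism for the coefficient sequence $0 \to H_g^b \to H^1(U_g^b, \wt P) \to A^b \to 0$, realized via Lefschetz duality), so that part is fine. The fatal problem is in Step 2: the claimed decomposition $H_g^b \cong H_g^{b-1} \oplus A$ as a module, with trivial action on the summand spanned by the capped boundary class $[\Delta_b]$, is false. By the point-push formula (Lemma \ref{lemma:pushformula}), an element $\gamma$ of the Birman kernel acts on $H_g^b$ by $x \mapsto x + \pair{\bar\gamma, x}\Delta_b$, so the submodule $A\cdot[\Delta_b]$ admits no invariant complement; the extension $0 \to A \to H_g^b \to H_g^{b-1} \to 0$ is non-split, and this non-splitting is precisely where the extra rank comes from. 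Worse, your proposed mechanism is backwards: if the module did split with a trivial $A$-summand, that summand would contribute $\Hom(\Mod_g^b, A) = 0$ (the mapping class group is perfect for $g \ge 3$), so the rank would \emph{not} grow by one per boundary component, contradicting the statement you are trying to prove. The paper's count instead comes from the long exact sequence in $U_g^{b-1}$-cohomology induced by this non-split coefficient sequence, where the image of $H^1(U_g^{b-1};A)$ inside $H^1(U_g^{b-1};H_g^b)$ supplies $b-1$ invariant classes (realized by the ``change of relative homology class'' cocycles $C(\alpha_{ib})$), with one more coming from the winding-number cocycle.

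The inductive architecture is also misaligned with how the inflation--restriction sequence actually behaves. For $1 \to U_g^{b-1} \to \Mod_g^b \to \Mod_g^{b-1} \to 1$ with coefficients $H_g^b$, the inflation term is $H^1(\Mod_g^{b-1}; (H_g^b)^{U_g^{b-1}}) = H^1(\Mod_g^{b-1}; K_g^b)$, which vanishes because $K_g^b$ is a trivial module and $\Mod_g^{b-1}$ has (virtually) trivial abelianization --- it is \emph{not} $H^1(\Mod_g^{b-1}; H_g^{b-1})$, so your inductive hypothesis never enters. The entire group is concentrated in the restriction term $H^1(U_g^{b-1}; H_g^b)^{\Mod_g^b}$, which has rank exactly $b$, not the ``single $A$-summand'' your obstacle paragraph anticipates after ``nearly all of the Birman contribution is killed.'' Pinning down that this term has rank at most $b$ requires showing $\Hom(U_g^{b-1}, H_g^{b-1})^{\Mod_g^b} \cong A$, which in the paper rests on Schur-type rigidity (Lemma \ref{L:LinAlgLemma}, Corollary \ref{C:EquivariantH}) together with a bounding-pair/homological-coherence argument forcing $f(\zeta) = 0$ on the fiber class; none of this is supplied or replaced by your sketch. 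Finally, your Step 3 asserts but does not prove that the $b$ classes $C(\psi_i)$ are cohomologically independent; some version of the paper's Lemma \ref{L:lowerbound} (or the restriction-to-boundary-twists computation in Proposition \ref{P:puncturevanish}) is needed to get the lower bound.
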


Once this has been established, Theorem \ref{T:MainMCGCohomology} is proved in Section \ref{SS:Aproofs}, and Proposition \ref{P:ArcFramedMCG} is established in Section \ref{SS:Aarcframed}.

    \subsection{Computation in the boundary component setting}\label{SS:A2}

Our objective here is Theorem \ref{T:MCGbdrycohom}: we seek to show that $H^1(\Mod_g^b; H_g^b) \cong A^b$, and that moreover this isomorphism is realized at the cocycle level by $C$. We first establish Proposition \ref{P:upperbound}, showing that the rank of $H^1(\Mod_g^b; H_g^b)$ is at most $b$. To prove Theorem \ref{T:MCGbdrycohom}, we then establish injectivity of $C$.

        \subsubsection{An upper bound on the rank} \label{SSS:upper}
In this section, we prove the following assertion.

\begin{prop}
    \label{P:upperbound}
    For $g \ge 2$ and $b \ge 2$, $H^1(\Mod_g^b; H_g^b)$ is a free $A$-module of rank at most $b$.
\end{prop}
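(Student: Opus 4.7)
The plan is to apply inflation-restriction for the sequence $1 \to \cI_g^b \to \Mod_g^b \to \mathrm{Sp}(2g, A) \to 1$, where $\cI_g^b$ is the Torelli subgroup, i.e.\ the kernel of the action of $\Mod_g^b$ on $H_g^b$. A preliminary observation is that the $\Mod_g^b$-action on $H_g^b$ factors through $\mathrm{Sp}(2g, A) = \mathrm{Sp}(H_g)$: the Picard-Lefschetz formula $T_c(v) = v + \pair{v,c}[c]$ applied to each generating Dehn twist fixes each boundary class (since $\pair{\partial_i, c} = 0$ for every simple closed curve $c$), while the quotient $H_g^b / \langle \partial_1, \ldots, \partial_b \rangle \cong H_g$ carries the standard symplectic action. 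The five-term inflation-restriction exact sequence then reads
\[
0 \to H^1(\mathrm{Sp}(2g, A); H_g^b) \to H^1(\Mod_g^b; H_g^b) \to \Hom_{\mathrm{Sp}(2g,A)}(H_1(\cI_g^b), H_g^b).
\]

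Using the $\mathrm{Sp}$-module extension $0 \to A^{b-1} \to H_g^b \to H_g \to 0$ (boundary classes on the left, $H_g$ on the right), Borel vanishing for $g \ge 3$ together with the vanishing of $H^1(\mathrm{Sp}(2g, A); A)$ implies $H^1(\mathrm{Sp}(2g, A); H_g^b) = 0$. Consequently $H^1(\Mod_g^b; H_g^b)$ injects into $\Hom_{\mathrm{Sp}(2g,A)}(H_1(\cI_g^b), H_g^b)$, which is torsion-free as a submodule of $\Hom$ into a torsion-free target; once a finite-rank bound is established, $H^1(\Mod_g^b; H_g^b)$ is automatically free.

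The rank bound of $b$ comes from a Johnson-theoretic analysis of $H_1(\cI_g^b; \bQ)$ as an $\mathrm{Sp}$-module. Johnson's theorem for a single boundary gives $H_1(\cI_g^1; \bQ) \cong \wedge^3 H_g = V(\omega_3) \oplus H_g$, contributing one copy of $A$ to $\Hom_{\mathrm{Sp}(2g,A)}$ via the $H_g$-summand of $H_g^b$. For $b \ge 2$, the boundary Dehn twists---each of which lies in $\cI_g^b$ since it acts trivially on $H_g^b$---span $b-1$ further independent trivial $\mathrm{Sp}$-summands of $H_1(\cI_g^b; \bQ)$, modulo a single Euler-characteristic-like relation; Schur's lemma applied to $H_g^b \cong H_g \oplus A^{b-1}$ then yields $b-1$ additional copies of $A$, for a total of $b$. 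The main obstacle is showing that no further $\mathrm{Sp}$-trivial summands or additional copies of $H_g$ appear in $H_1(\cI_g^b; \bQ)$; this is most naturally handled either by an inductive argument using the boundary-capping sequence $1 \to \bZ \to \Mod_g^b \to \Mod_g^{b-1,1} \to 1$ (which peels off one boundary twist per step and identifies $H_g^b \cong H_g^{b-1,1}$ via the capping-to-puncture homotopy equivalence), or by a direct Johnson-homomorphism computation for multiple boundary components.
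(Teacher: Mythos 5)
There is a genuine gap at the very first step. For $b \ge 2$ the action of $\Mod_g^b$ on $H_g^b$ does \emph{not} factor through $\mathrm{Sp}(2g,A)$. Picard--Lefschetz only shows that the action preserves the filtration $K_g^b \subset H_g^b$ (where $K_g^b$ is the span of the boundary classes), acting trivially on $K_g^b$ and symplectically on the quotient $H_g$; it does not kill the off-diagonal block. Concretely, if $c$ is a nonseparating curve and $c'$ is the curve with $[c'] = [c] + \Delta_1$ obtained by pushing $c$ over the boundary component $\Delta_1$, then $T_c T_{c'}^{-1}$ acts trivially on $H_g$ but sends $v \mapsto v - \pair{v,c}\Delta_1$ on $H_g^b$ --- this is exactly the push formula of Lemma \ref{lemma:pushformula} in the paper. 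So the quotient of $\Mod_g^b$ by your $\cI_g^b$ is an extension of $\mathrm{Sp}(2g,\bZ)$ by a nontrivial unipotent group isomorphic to (a finite-index subgroup of) $\Hom(H_g, K_g^b)$, your five-term sequence is not the one you wrote, and the inflation term does not vanish: already $\Hom\bigl(\Hom(H_g,K_g^b), H_g^b\bigr)^{\mathrm{Sp}}$ is nonzero, and in the correct computation part of the rank $b$ lives precisely there. Two further problems: (i) the proposition is asserted for $g \ge 2$, but Borel vanishing, Johnson's theorem, and even finite generation of the Torelli group all fail at $g = 2$ (by Mess, $H_1(\cI_2;\bQ)$ is infinite-dimensional), so your route cannot reach that case; (ii) the actual rank bound hinges on knowing the multiplicities of the trivial representation and of $H_g$ in $H_1(\cI_g^b;\bQ)$, which you flag as ``the main obstacle'' but do not resolve --- that is the substance of the claim and would require a Putman-style computation of the abelianization of the Torelli group with several boundary components.

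For comparison, the paper avoids Torelli theory entirely: it caps one boundary component, uses the split Birman exact sequence $1 \to U_g^{b-1} \to \Mod_g^b \to \Mod_g^{b-1} \to 1$ to reduce (Lemma \ref{P:ResToU}) to computing $H^1(U_g^{b-1}; H_g^b)^{\Mod_g^b}$, and then bounds that by $(b-1)+1$ using the coefficient sequence $0 \to A \to H_g^b \to H_g^{b-1} \to 0$ together with the explicit push formula and a Schur-type argument (Corollary \ref{C:UtoHInvariants}). That argument is elementary and valid for all $g \ge 2$. If you want to salvage your approach, you would need to replace $\mathrm{Sp}(2g,A)$ by the actual image of $\Mod_g^b$ in $\mathrm{Aut}(H_g^b)$, compute $H^1$ of that extension with coefficients in $H_g^b$, and restrict to $g \ge 3$; at that point you are doing more work than the paper does.
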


This will follow from an analysis of a Birman exact sequence. Consider the capping map $\Sigma_g^b \to \Sigma_g^{b-1}$ capping $\Delta_b$ off with a disk. This produces two short exact sequences of relevance. The first is a short exact sequence of $\Mod_g^b$-modules
\[
0 \to A \to H_g^b \to H_g^{b-1} \to 0,
\]
with $A$ generated by $[\Delta_b]$. The second (the Birman exact sequence) is the short exact sequence of groups
\[
1 \to U_g^{b-1} \to \Mod_g^b \to \Mod_g^{b-1} \to 1,
\]
where $U_g^{b-1}$ denotes the fundamental group of the unit tangent bundle of the capped-off surface $\Sigma_g^{b-1}$.

\begin{lem}\label{P:ResToU}
For all $g \ge 2, b \ge 2$, the restriction homomorphism from $H^1(\Mod_{g}^{b}; H_g^b)$ to $H^1(U_g^{b-1}; H_g^b)^{\Mod_{g}^{b}}$ is an isomorphism. 
\end{lem}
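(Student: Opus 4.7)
The natural tool is the inflation--restriction (five-term) exact sequence attached to the Birman exact sequence
\[
1 \to U_g^{b-1} \to \Mod_g^b \to \Mod_g^{b-1} \to 1,
\]
which, with coefficients in the $\Mod_g^b$-module $H_g^b$, reads
\[
0 \to H^1\!\big(\Mod_g^{b-1};\, (H_g^b)^{U_g^{b-1}}\big) \to H^1(\Mod_g^b; H_g^b) \xrightarrow{\mathrm{res}} H^1(U_g^{b-1}; H_g^b)^{\Mod_g^b} \xrightarrow{d_2} H^2\!\big(\Mod_g^{b-1};\, (H_g^b)^{U_g^{b-1}}\big).
\]
The plan is to identify the submodule of invariants, verify vanishing of the leftmost term (giving injectivity of $\mathrm{res}$), and then establish vanishing of $d_2$ (giving surjectivity).

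The first step is to compute $(H_g^b)^{U_g^{b-1}}$. The subgroup $U_g^{b-1} \trianglelefteq \Mod_g^b$ acts on $H_g^b$ in a controlled way. The $S^1$-fiber of $UT\Sigma_g^{b-1}$ maps to the Dehn twist $T_{\Delta_b}$, which is homologically trivial (no class in $H_g^b$ intersects the boundary $\Delta_b$), while a point-push $P_\gamma$ for $\gamma \in \pi_1(\Sigma_g^{b-1})$ acts unipotently by $[c] \mapsto [c] + \langle c, \gamma\rangle_{\Sigma_g^{b-1}}[\Delta_b]$. Hence $U_g^{b-1}$-invariance amounts to lying in the preimage under $H_g^b \to H_g^{b-1}$ of the radical of the intersection form on $H_g^{b-1}$, which is spanned by the remaining boundary classes. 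I would conclude
\[
(H_g^b)^{U_g^{b-1}} \;\cong\; A\{[\Delta_1],\dots,[\Delta_b]\}\big/\big(\textstyle\sum_i[\Delta_i]\big) \;\cong\; A^{b-1},
\]
with $\Mod_g^{b-1}$ acting trivially, since mapping classes fix each boundary component pointwise.

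Injectivity of $\mathrm{res}$ then follows immediately: the first term equals $\Hom(\Mod_g^{b-1}, A)^{b-1}$, which vanishes for all $g \ge 2$ and torsion-free $A$ because the abelianization of $\Mod_g^{b-1}$ is finite.

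The main obstacle is surjectivity, i.e., showing $d_2$ vanishes on every $\Mod_g^b$-invariant class. My strategy here would be to sidestep $d_2$ by directly exhibiting enough preimages under $\mathrm{res}$. The natural source is the ``change of relative winding number'' construction: for each class $\xi \in H^1(U_g^b, \tilde P)$, the assignment $f \mapsto \mathrm{PD}((f^*-1)\xi)$ defines a crossed homomorphism $\Mod_g^b \to H_g^b$ via twist-linearity \eqref{eqn:twistlin} (after checking that $(f^*-1)\xi$ lies in the subspace $H^1(\Sigma_g^b) \subset H^1(UT\Sigma_g^b)$, exactly as in the proof of Lemma \ref{L:ItsACoboundary}); this is the map $C$ of Theorem \ref{T:MCGbdrycohom}. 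I would then verify, using the product decomposition $U_g^{b-1} \cong \pi_1(\Sigma_g^{b-1}) \times \bZ$ (valid because $\Sigma_g^{b-1}$ has boundary and hence trivial tangent bundle), that the restrictions $\mathrm{res}\circ C(\xi)$ span the $\Mod_g^b$-invariants of $H^1(U_g^{b-1}; H_g^b)$. Concretely, on point-push generators the restriction is computed by a signed count of intersections with $\gamma$ weighted by winding numbers of boundary-encircling arcs, and one matches this against a direct calculation of the invariant crossed homomorphisms $U_g^{b-1} \to H_g^b$. The hardest part of the proof will be this last matching calculation.
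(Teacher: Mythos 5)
Your setup, your computation of the invariants $(H_g^b)^{U_g^{b-1}} = K_g^b \cong A^{b-1}$ via the point-push formula, and your injectivity argument (vanishing of $H^1(\Mod_g^{b-1};K_g^b)$ because the abelianization of $\Mod_g^{b-1}$ is finite and $K_g^b$ is a trivial torsion-free module) all match the paper's proof. The gap is in surjectivity, which you explicitly defer: you propose to exhibit preimages via the change-of-winding-number cocycles $C(\xi)$ and then ``match'' their restrictions against a direct computation of the invariant crossed homomorphisms $U_g^{b-1}\to H_g^b$, but you do not carry out either half of this, and you acknowledge it is the hard part. This is not a minor omission: as written, your plan requires an independent determination of $H^1(U_g^{b-1};H_g^b)^{\Mod_g^b}$ (which is essentially the content of the paper's Proposition \ref{P:upperbound} and the analysis of the sequence \eqref{eqn:ULES}), plus a nontrivial evaluation of $\mathrm{res}\circ C(\xi)$ on point-push generators. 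Moreover, over $A=\bZ$ the natural soft version of your argument --- injectivity of $\mathrm{res}$ together with a rank count showing the image contains a free submodule of full rank --- only yields that the image has finite index in the invariants, not that $\mathrm{res}$ is onto, so the ``matching calculation'' would genuinely have to be done integrally.

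The idea you are missing is that for $b\ge 2$ the Birman exact sequence
$1 \to U_g^{b-1} \to \Mod_g^b \to \Mod_g^{b-1} \to 1$
\emph{splits}: one embeds $\Sigma_g^{b-1}\hookrightarrow \Sigma_g^b$ by attaching a pair of pants along $\Delta_{b-1}$ (so that capping $\Delta_b$ recovers $\Sigma_g^{b-1}$), and extends mapping classes by the identity. The paper uses this splitting to degenerate the five-term inflation--restriction sequence into a short exact sequence, so that surjectivity of $\mathrm{res}$ onto the $\Mod_g^b$-invariants comes for free and no explicit cocycles are needed at this stage (they enter later, in Sections \ref{SSS:lower} and \ref{SSS:cocycles}, to pin down the rank and the generators). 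If you want to complete your route instead, you must either prove the vanishing of $d_2$ directly or finish the spanning computation you outline; as it stands the proposal establishes only that $\mathrm{res}$ is injective.
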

\begin{proof}
We will examine the inflation-restriction sequence associated to the Birman exact sequence
    \begin{equation}\label{eqn:ses}
        1 \to U_g^{b-1} \to \Mod_{g}^{b} \to \Mod_{g}^{b-1} \to 1.
    \end{equation}
    A first observation is that since $b \ge 2$, this sequence splits. Indeed, consider
    \begin{equation*}\label{eqn:splitting}
        \Sigma_{g}^{b-1} \to \Sigma_{g}^{b} \to \Sigma_{g}^{b-1},
    \end{equation*}
    where the first map attaches one boundary component of a pair of pants to $\Delta_{b-1}$, and the second caps $\Delta_b$ with a disk. On the level of mapping class groups, this induces the claimed splitting. 

    The inflation-restriction sequence for a split short exact sequence degenerates to a short exact sequence. In this setting of \eqref{eqn:ses} with coefficients in $H_g^b$, this produces
    \[
    0 \to H^1(\Mod_g^{b-1}; (H_g^b)^{U_g^{b-1}}) \to H^1(\Mod_g^b; H_g^b) \to H^1(U_g^{b-1}; H_g^b)^{\Mod_g^b} \to 0.
    \]
    Thus the claim of Lemma \ref{P:ResToU} is equivalent to the assertion $H^1(\Mod_g^{b-1}; (H_g^b)^{U_g^{b-1}})=0$. To see this, we first investigate the structure of $H_g^b$ as a $U_g^{b-1}$-module.

    \begin{lem}\label{lemma:pushformula}
        Let $\gamma \in U_g^{b-1}$ and $x \in H_g^b$ be given. Then
        \[
        \gamma \cdot x = x + \pair{\bar\gamma,x}\Delta_b,
        \]
        where $\bar\gamma \in H_{g}^{b}$ is the image of $\gamma$ in homology under the natural projection $H_1(U_g^{b-1}) \to H_1(\Sigma_{g}^{b-1})$, lifted arbitrarily to $H_g^b$ (well-defined since $\Delta_b$ is in the kernel of $\langle \cdot, \cdot \rangle$). 
    \end{lem}
    \begin{proof}
        $\gamma \in U_g^b$ acts on $\Sigma_{g}^{b}$ by pushing $\Delta_b$ around on the framed path specified by $\gamma$. Each crossing of $\gamma$ with (a representative of) $x$ alters $x$ by adding or subtracting $\Delta_b$ (according to the sign of the crossing). 
    \end{proof}

    Define $K_g^b \le H_g^b$ as the subspace spanned by the boundary components. It enjoys the following properties (the proofs of which are easy exercises).

    \begin{lem}\label{L:Kprops}
        Let $K_g^b \le H_g^b$ be the subspace spanned by the classes of the boundary components $\Delta_1, \dots, \Delta_b$. Then the following properties hold.
        \begin{enumerate}
            \item $K_g^b$ is free of rank $b-1$, with the relation $\Delta_1 + \dots + \Delta_b = 0$ holding.
            \item $K_g^b$  is the kernel of the map $H_g^b \to H_1(\Sigma_g)$ induced by capping each boundary component, and is also the kernel of the map $H_g^b \to H_1(\Sigma_g^b, \partial)$ induced by the long exact sequence of the pair $(\Sigma_g^b, \partial)$, where $\partial$ denotes the boundary of $\Sigma_g^b$.
            \item $(H_g^b)^{\Mod_g^b} = K_g^b$.
            \item $K_g^b$ is the kernel of the intersection form $\pair{\cdot, \cdot}: H_g^b \times H_g^b \to A$.
        \end{enumerate}
    \end{lem}
    
    \begin{cor}\label{cor:huk}
        $(H_g^b)^{U_g^{b-1}} = K_g^b$.
    \end{cor}
    \begin{proof}
        By the formula of Lemma \ref{lemma:pushformula}, $x \in H_g^b$ is globally invariant if and only if it lies in the kernel of the intersection form.
    \end{proof}
    \begin{cor}
        $H^1(\Mod_{g}^{b-1}; (H_g^b)^{U_g^{b-1}}) = 0$, for any $g \ge 2$.
    \end{cor}
    \begin{proof}
        By Corollary \ref{cor:huk},
        \[
        H^1(\Mod_{g}^{b-1}; (H_g^b)^{U_g^{b-1}}) = H^1(\Mod_{g}^{b-1}; K_g^b).
        \]
        Now $K_g^b$ is torsion-free, and trivial as a $\Mod_{g}^{b-1}$-module. Since $H_1(\Mod_g^{b-1})$ is finite cyclic for $g =2$ and trivial for $g \ge 3$, it follows that $H^1(\Mod_{g}^{b-1};K_g^b) = 0$ as claimed.
    \end{proof}
    This completes the proof of Lemma \ref{P:ResToU}.
\end{proof}

Following Lemma \ref{P:ResToU}, our attention shifts to $H^1(U_g^{b-1};H_g^b)^{\Mod_g^b}$. This can be studied by means of the long exact sequence in cohomology induced by the short exact sequence
\[
0 \to A \to H_g^b \to H_g^{b-1} \to 0.
\]
Identifying $(H_g^b)^{U_g^{b-1}} = K_g^b$, the long exact sequence becomes
\begin{equation} \label{eqn:ULES} 
    0 \to A \to K_g^b \to H_g^{b-1}
         \to H^1(U_g^{b-1}; A) \to H^1(U_g^{b-1};H_g^b) \to H^1(U_g^{b-1}; H_g^{b-1}).
\end{equation}

To study $H^1(U_g^{b-1};H_g^b)$ (and ultimately $H^1(U_g^{b-1};H_g^b)^{\Mod_g^b}$), we are interested in the $\Mod_g^b$-modules $H^1(U_g^{b-1}; A)$ and $H^1(U_g^{b-1}; H_g^{b-1})$. In Corollary \ref{C:UtoHInvariants} below, we compute $H^1(U_g^{b-1}; H_g^{b-1})$; this will follow from the next sequence of results.

\begin{lem}\label{L:LinAlgLemma}
Suppose that $\Gamma$ is a group acting linearly on a vector space $V$. Suppose that $W$ is the subspace of $\Gamma$-fixed points. Suppose too that the only $\Gamma$-equivariant maps from $V/W$ to itself are constant multiples of the identity. Suppose finally that there is a vector $v_0 \in V$ so that $\Gamma \cdot v_0$ is not contained in any nontrivial proper affine subspace. Then the only equivariant linear maps from $V$ to $V$ are constant multiples of the identity.
\end{lem}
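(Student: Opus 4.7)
The plan is to exploit the quotient hypothesis to pin down $T$ up to a scalar, then use the orbit hypothesis to rule out the remaining ambiguity.

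First I would observe that any $\Gamma$-equivariant linear map $T: V \to V$ automatically preserves $W$: for $w \in W$ and $\gamma \in \Gamma$, we have $\gamma \cdot T(w) = T(\gamma \cdot w) = T(w)$, so $T(w) \in V^\Gamma = W$. Consequently $T$ descends to a $\Gamma$-equivariant endomorphism $\bar T$ of $V/W$, and by the second hypothesis $\bar T = c \cdot \Id_{V/W}$ for some scalar $c$.

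Next, set $S := T - c \cdot \Id_V$. By construction $S(V) \subseteq W$, and $S$ is still $\Gamma$-equivariant. But $\Gamma$ acts trivially on $W$, so the equivariance $\gamma \cdot S(v) = S(\gamma v)$ collapses to $S(\gamma v) = S(v)$ for every $\gamma \in \Gamma$ and $v \in V$. In other words, $S$ vanishes on the linear subspace
\[
U := \mathrm{span}\{\gamma \cdot v - v \mid \gamma \in \Gamma, \ v \in V\}.
\]

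It remains to show $U = V$, which will force $S \equiv 0$ and hence $T = c \cdot \Id_V$. For this, note that $U$ contains the subspace $U_0 := \mathrm{span}\{\gamma \cdot v_0 - v_0 \mid \gamma \in \Gamma\}$. The affine span of the orbit $\Gamma \cdot v_0$ is precisely $v_0 + U_0$. By the third hypothesis this affine span is all of $V$, i.e.\ $v_0 + U_0 = V$; since $U_0$ is a linear subspace, this equality forces $U_0 = V$, hence $U = V$.

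I do not anticipate any real obstacle here: the argument is essentially a structural unwinding once one sees that the obstruction $S = T - c \Id$ lands in the invariants and is therefore $\Gamma$-constant, the only mildly subtle step being the translation between ``orbit is not contained in a proper affine subspace'' and ``the differences $\gamma v_0 - v_0$ linearly span $V$''.
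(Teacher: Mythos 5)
Your proof is correct and follows essentially the same route as the paper's: show $T$ preserves $W$, reduce to $\bar T = c\,\Id$ on $V/W$, and observe that $S = T - c\,\Id$ lands in $W$ and is therefore $\Gamma$-constant. The only cosmetic difference is in the last step, where the paper notes that the orbit of $v_0$ lies in the affine subspace $S^{-1}(S(v_0))$ while you dually observe that $S$ kills the linear span of the differences $\gamma v_0 - v_0$; these are equivalent.
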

The statement and proof work equally well when replacing the category of vector spaces with the category of $\bZ$-modules.
\begin{proof}
Let $f: V \ra V$ be any equivariant linear map. It must take $W$ to itself and hence it induces an equivariant map from $V/W$ to itself, which is necessarily constant. Therefore, there is a constant $c$ so that $f(v) - cv$ is an equivariant linear map from $V$ to itself whose image belongs to $W$, i.e. this map is $\Gamma$-invariant. The preimage of any point is an affine subspace. However, $\Gamma \cdot v_0$ is not contained in any nontrivial proper affine subspace and so the map must be the zero map. 
\end{proof}

\begin{cor}\label{C:EquivariantH}
There is an isomorphism $\mathrm{Hom}(H_g^{b-1}, H_g^{b-1})^{\Mod_{g}^b} \cong A$, with the left-hand side spanned by multiples of the identity.
\end{cor}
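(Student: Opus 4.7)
The plan is to apply Lemma \ref{L:LinAlgLemma} with $\Gamma = \Mod_g^b$ acting on $V := H_g^{b-1}$ through the surjection $\Mod_g^b \twoheadrightarrow \Mod_g^{b-1}$ from the Birman sequence appearing in Proposition \ref{P:ResToU}. By Lemma \ref{L:Kprops}(3), the space of $\Gamma$-fixed vectors is $W := V^\Gamma = K_g^{b-1}$, and the quotient $V/W$ is naturally identified with $H_1(\Sigma_g)$, on which $\Gamma$ acts via its further projection onto $\mathrm{Sp}(2g,\bZ)$.

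The first hypothesis of Lemma \ref{L:LinAlgLemma}---that the only $\Gamma$-equivariant endomorphisms of $V/W$ are scalar multiples of the identity---is a standard application of Schur's lemma, using that $H_1(\Sigma_g;\bQ)$ is an irreducible $\mathrm{Sp}(2g,\bQ)$-representation. For $A = \bZ$, a rational scalar preserving the sublattice $H_1(\Sigma_g;\bZ) \subset H_1(\Sigma_g;\bQ)$ must be integral.

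The substantive step is the second hypothesis: producing a $v_0 \in V$ whose $\Gamma$-orbit has affine span equal to $V$. I would take $v_0 := [\alpha]$ for $\alpha$ a non-separating simple closed curve on $\Sigma_g^{b-1}$, and set
\[
N := \mathrm{span}_A\{\gamma v_0 - v_0 : \gamma \in \Gamma\} \subseteq V.
\]
Since the affine span of the orbit is $v_0 + N$, the task reduces to showing $N = V$. The image of $N$ in $V/W \cong H_1(\Sigma_g)$ contains all differences of primitive classes because the $\mathrm{Sp}(2g,\bZ)$-orbit of $\bar v_0$ is exactly the set of primitive vectors. Such differences generate $H_1(\Sigma_g;A)$---for example $a_i = (a_i+b_i) - b_i$ writes a basis vector as a difference of primitives---and hence $N + W = V$.

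To establish $W \subseteq N$, I would invoke boundary-push maps. For each boundary component $\Delta_i$ with $1 \leq i \leq b-1$, filling $\Delta_i$ with a marked point and applying the point-pushing Birman exact sequence produces, for any simple closed curve $\beta_i$ on $\Sigma_g^{b-1}$ meeting $\alpha$ exactly once, a mapping class $P_{\beta_i} \in \Mod_g^{b-1}$ that pushes $\Delta_i$ around $\beta_i$. The boundary-push analog of Lemma \ref{lemma:pushformula} yields the action $P_{\beta_i}(x) = x + \pair{\beta_i, x}[\Delta_i]$, so $P_{\beta_i}(v_0) - v_0 = \pm [\Delta_i] \in N$. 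Varying $i$, the classes $[\Delta_1], \dots, [\Delta_{b-1}]$ generate $W = K_g^{b-1}$, so $W \subseteq N$ and thus $N = V$. Lemma \ref{L:LinAlgLemma} then delivers $\mathrm{End}_\Gamma(V) = A \cdot I$, which is exactly the claim of the corollary. The main technical obstacle is confirming the boundary-push action formula, since the classical Birman point-push computation is usually articulated for marked points on closed surfaces rather than boundary components on surfaces that retain additional boundary; this requires careful bookkeeping but introduces no genuinely new ideas.
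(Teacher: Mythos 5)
Your proposal is correct and follows essentially the same route as the paper: both apply Lemma \ref{L:LinAlgLemma} with $V = H_g^{b-1}$, $W = K_g^{b-1}$, invoke Schur's lemma on $V/W \cong H_1(\Sigma_g)$, and verify the affine-span hypothesis by combining the primitivity of the orbit's image in $H_1(\Sigma_g)$ with the boundary-push formula of Lemma \ref{lemma:pushformula} to sweep out $K_g^{b-1}$. Your version merely makes explicit (via the subspace $N$) what the paper compresses into the observation that the orbit is a $K$-torsor over the set of primitive vectors.
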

\begin{proof}
Set $V = H_g^{b-1}$ and $W = K_g^{b-1}$. Then $V/W \cong H_1(\Sigma_g)$, which is an irreducible $\Mod_{g}^b$-module and so the only equivariant maps from $V/W$ to itself are constant multiples of the identity by Schur's lemma. Notice that if $v_0 \in H_g^{b-1}$ projects to a primitive vector in $H_1(\Sigma_g)$, then its mapping class group orbit is a $K$-torsor (by Lemma \ref{lemma:pushformula}) whose image in $H_1(\Sigma_g)$ is the set of primitive vectors. This set is not contained in any affine subspace of $H_g^{b-1}$. The claim now follows from the previous lemma. 
\end{proof}

\begin{cor}\label{C:UtoHInvariants}
$\mathrm{Hom}(U_g^{b-1}, H_g^{b-1})^{\Mod_{g}^b} \cong A$. In particular, every element is a multiple of the projection map from $U_g^{b-1}$ to $H_g^{b-1}$.
\end{cor}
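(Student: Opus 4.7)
Since $H_g^{b-1}$ is abelian, any homomorphism $U_g^{b-1} \to H_g^{b-1}$ factors through the abelianization, so the plan is to study $\mathrm{Hom}(H_1(U_g^{b-1}), H_g^{b-1})^{\Mod_g^b}$. As $b \ge 2$, the surface $\Sigma_g^{b-1}$ has non-empty boundary, so $UT\Sigma_g^{b-1}$ is a trivial $S^1$-bundle; this yields a short exact sequence of $\Mod_g^b$-modules
\[
0 \to A\zeta \to H_1(U_g^{b-1}) \xrightarrow{p_*} H_g^{b-1} \to 0,
\]
where the fiber class $\zeta$ is fixed pointwise by $\Mod_g^b$, since it corresponds under the Birman exact sequence to the central Dehn twist $T_{\Delta_b}$. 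The projection $p_*$ will play the role of the canonical equivariant map.

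Applying $\mathrm{Hom}(-, H_g^{b-1})^{\Mod_g^b}$ and combining Corollary \ref{C:EquivariantH} with Lemma \ref{L:Kprops}(3), I obtain the left-exact sequence
\[
0 \to A \to \mathrm{Hom}(H_1(U_g^{b-1}), H_g^{b-1})^{\Mod_g^b} \to K_g^{b-1},
\]
whose leftmost $A$ consists precisely of the multiples of $p_*$. It therefore suffices to show that the ``evaluation at $\zeta$'' map on the right is zero.

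To analyze this map, fix a non-vanishing vector field $\xi$ on $\Sigma_g^{b-1}$ and use its section to split $H_1(U_g^{b-1}) \cong H_g^{b-1} \oplus A\zeta$ as abelian groups. An equivariant $\phi$ then has the form $\phi(x+n\zeta) = T_0(x) + nv$ with $v := \phi(\zeta) \in K_g^{b-1}$, and equivariance becomes the functional identity
\[
T_0(fx) - f\, T_0(x) = -c(f,x)\, v, \qquad c(f,x) := (f^* W_\xi - W_\xi)(x),
\]
where $c$ is the cocycle measuring how $\xi$ fails to be $f$-invariant. Specializing $f = T_\gamma$ and applying twist-linearity \eqref{eqn:twistlin} gives $c(T_\gamma, x) = W_\xi(\gamma)\, \langle x, \gamma \rangle$, and expanding both sides yields
\[
\langle x, \gamma\rangle \bigl(T_0(\gamma) + W_\xi(\gamma)\,v\bigr) = \langle T_0(x), \gamma\rangle\, \gamma.
\]
Varying $x$ shows that $\langle T_0(x),\gamma\rangle / \langle x,\gamma\rangle$ equals a scalar $\mu_\gamma$, and a further linearity argument (comparing $\gamma$'s that span distinct lines in $H_1(\Sigma_g)$) forces $\mu_\gamma = \mu$ to be independent of $\gamma$. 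Consequently $T_0(x) = \mu x + k(x)$ for some linear $k: H_g^{b-1} \to K_g^{b-1}$, and substituting back gives $k(\gamma) = -W_\xi(\gamma)\, v$ for every simple closed curve $\gamma$.

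The main obstacle is then extracting $v = 0$ from this final relation, and this is where the geometry of winding numbers enters decisively. The essential observation is that $k$ depends only on the homology class $[\gamma] \in H_g^{b-1}$, whereas $W_\xi$ is a genuine function of the isotopy class of $\gamma$ that does not descend to homology. Concretely, one produces homologous non-isotopic simple closed curves $\gamma_1, \gamma_2$ with $W_\xi(\gamma_1) \ne W_\xi(\gamma_2)$ by applying a bounding pair map $T_{c_1}T_{c_2}^{-1}$ (with $W_\xi(c_1) \ne W_\xi(c_2)$) to a curve intersecting the pair. Since $[\gamma_1] = [\gamma_2]$ in $H_g^{b-1}$ we have $k([\gamma_1]) = k([\gamma_2])$, and so the identity forces $(W_\xi(\gamma_1) - W_\xi(\gamma_2))\, v = 0$, which yields $v = 0$ as $H_g^{b-1}$ is torsion-free. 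With $v = 0$, $T_0$ is genuinely equivariant, and Corollary \ref{C:EquivariantH} gives $T_0 = \mu \cdot \mathrm{id}$, whence $\phi = \mu\, p_*$ as required.
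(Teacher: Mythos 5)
Your proof is correct and follows essentially the same route as the paper: both reduce, via the splitting $H_1(U_g^{b-1}) \cong H_g^{b-1} \oplus A\zeta$ (equivalently the split sequence $0 \to \bZ \to U_g^{b-1} \to \pi_1(\Sigma_g^{b-1}) \to 1$) together with Corollary \ref{C:EquivariantH}, to showing that an equivariant homomorphism annihilates the fiber class $\zeta$, and both extract this vanishing from the same geometric fact — a bounding pair map acts trivially on $H_g^{b-1}$ while, by homological coherence, the Johnson lifts of a curve and of its image differ by $2\zeta$. Your intermediate Dehn-twist functional equation is a correct but unnecessary detour: the paper obtains the conclusion in one line by comparing the values of the equivariant map on $[\hat a]$ and on $[\hat a'] = [\hat a] + 2\zeta$, where $a'$ is the image of $a$ under a bounding pair map.
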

\begin{proof}
We apply the inflation-restriction sequence associated to the split exact sequence
\[
0 \to \bZ \to U_g^{b-1} \to \pi_1(\Sigma_g^{b-1}) \to 1
\]
induced by the bundle projection, taking coefficients in $H_g^{b-1}$; note that by Lemma \ref{lemma:pushformula}, this is a trivial $U_g^{b-1}$-module. As before, since the sequence of groups is split, the inflation-restriction sequence degenerates to a short exact sequence:
\[
0 \to H^1(\Sigma_g^{b-1}; H_g^{b-1}) \to H^1(U_g^{b-1};H_g^{b-1}) \to H^1(\bZ; H_g^{b-1}) \to 0.
\]
Applying the universal coefficients theorem, this can be understood as
\[
0 \to \Hom(H_g^{b-1}, H_g^{b-1}) \to \Hom(U_g^{b-1},H_g^{b-1}) \to H_g^{b-1} \to 0.
\]
This is a short exact sequence of $\Mod_g^b$-modules. We claim that 
\[
(\Hom(U_g^{b-1},H_g^{b-1}))^{\Mod_g^b} \le \Hom(H_g^{b-1}, H_g^{b-1}).
\]
As $(\Hom(H_g^{b-1}, H_g^{b-1}))^{\Mod_g^b} \cong A$ by the previous lemma, this will establish the claim.

The claim $(\Hom(U_g^{b-1},H_g^{b-1}))^{\Mod_g^b} \le \Hom(H_g^{b-1}, H_g^{b-1})$ is equivalent to the assertion that $f(\zeta) = 0$ for any $f \in (\Hom(U_g^{b-1},H_g^{b-1}))^{\Mod_g^b}$, where as usual $\zeta$ denotes the class of the fiber. A simple preliminary observation is that the action of $\Mod_g^b$ on this space factors through $\Mod_g^{b-1}$. To see that $f(\zeta) = 0$ for any $\Mod_g^{b-1}$-equivariant $f$, let $c, c'$ be a pair of nonseparating simple closed curves on $\Sigma_g^{b-1}$ cobounding a subsurface $\Sigma_1^2\le \Sigma_g^{b-1}$, and let $a$ be a simple closed curve on $\Sigma_g^{b-1}$ satisfying $i(a,c) = i(a, c') = 1$. Let $\phi = T_c T_{c'}^{-1} \in \Mod_g^{b-1}$ be the bounding pair map, and set $a' = \phi(a)$. Then $a, a'$ cobound a subsurface $\Sigma_1^2 \subset \Sigma_g^{b-1}$. Lifting $a, a'$ to homology classes on $U_g^{b-1}$ via the Johnson lift, homological coherence (equation \eqref{eqn:homcoh}) implies that $[a'] = [a] + 2 \zeta$ when suitably oriented. 

This construction enforces the condition $f(\zeta) = 0$. From the computation $[a'] = [a] + 2 \zeta$,
\[
f(a') = f(a + 2 \zeta) = f(a) + 2 f(\zeta).
\]
But by equivariance,
\[
f(a') = f(\phi a) = \phi f(a) = f(a),
\]
since $\phi = T_c T_{c'}^{-1}$ acts {\em trivially} on $H_g^{b-1}$.

\end{proof}

\begin{proof}[Proof of Proposition \ref{P:upperbound}]
   By Lemma \ref{P:ResToU}, it suffices to show that $H^1(U_g^{b-1};H_g^b)^{\Mod_g^b}$ is free of rank at most $b$. We do so by analyzing the exact sequence
   \begin{align*}
        0 \to A \to K_g^{b} \to H_g^{b-1}
         \to H^1(U_g^{b-1}; A) \to H^1(U_g^{b-1};H_g^b) \to H^1(U_g^{b-1}; H_g^{b-1}).
    \end{align*}
    of \eqref{eqn:ULES}. A first observation is that since $H^1(U_g^{b-1};A)$ and $H^1(U_g^{b-1}, H_g^{b-1})$ are free (as Abelian groups), so too must be the term $H^1(U_g^{b-1}; H_g^b)$ in between. The first four terms in the sequence are free of ranks $1, b-1, 2g+b-2,$ and $2g+b-1$, respectively; exactness then implies that the image of $H^1(U_g^{b-1};A)$ in $H^1(U_g^{b-1};H_g^b)$ has rank $b-1$. A closer look shows that the image of $H^1(U_g^{b-1};A)$ in $H^1(U_g^{b-1};H_g^b)$ can be identified with $K_g^{b-1}$ - this identification is at the level of $\Mod_g^b$-modules, and so in particular, $K_g^{b-1} \le H^1(U_g^{b-1};H_g^b)^{\Mod_g^b}$.

    By Corollary \ref{C:UtoHInvariants}, $H^1(U_g^{b-1};H_g^{b-1})^{\Mod_g^b}$ has rank $1$. Since any element of $H^1(U_g^{b-1}; H_g^b)^{\Mod_g^b}$ projects to $H^1(U_g^{b-1};H_g^{b-1})^{\Mod_g^b}$ under the change-of-coefficients map, it follows that $H^1(U_g^{b-1}; H_g^b)^{\Mod_g^b}$ has rank at most $(b-1) +1 = b$ as claimed.
\end{proof}

        \subsubsection{The lower bound} \label{SSS:lower}

The cocycles spanning $H^1(\Mod_g^b; H_g^b)$ will be constructed from the connecting map associated to a short exact sequence of coefficients. Our first task is to describe this sequence of coefficients, which arises from the unit tangent bundle of the surface.

The boundary of $\Sigma_g^b$ will be denoted $\partial$. Enumerate the components as
\[
\partial = \Delta_1 \sqcup \dots \sqcup \Delta_b,
\]
and let $P = \{p_1, \dots, p_b\}$ be a collection of points with $p_i \in \Delta_i$. Let $U_g^b$ denote the unit tangent bundle of $\Sigma_g^b$, and let $\tilde \partial$ denote its boundary. Then $\tilde \partial$ is a union of $b$ tori $\tilde \Delta_i$. We fix a basis for $H_1(\tilde \Delta_i)$: let $\ell_i$ be the $S^1$ fiber over $p_i \in \Delta_i$, and let $m_i$ be some meridian, projecting down to $[\Delta_i] \in H_1(\Sigma_g^b)$. Finally, let $\tilde P$ be a set of lifts of $P$ to $U_g^b$. 

\begin{lem}
    For $g \ge 2$ and $b \ge 1$, there is a short exact sequence of $\Mod_g^b$-modules
    \[
    0 \to H_g^b \to H^1(U_g^b, \tilde P) \to A^b \to 0,
    \]
    with $A^b$ carrying the trivial action. $A^b$ can be identified with the subspace of $H_1(\tilde \partial)$ spanned by $m_1 + \dots + m_b, \ell_1 - \ell_b, \dots, \ell_{b-1} - \ell_b$.
\end{lem}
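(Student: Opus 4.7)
The plan is to build the sequence via Lefschetz duality and the long exact sequence of a pair. By Lemma \ref{L:relwnf}, there is a natural $\Mod_g^b$-equivariant isomorphism
\[
H^1(U_g^b, \tilde P) \cong H_2(U_g^b, \tilde\partial \setminus \tilde P),
\]
so it suffices to construct the desired short exact sequence on the right-hand side. The main input will then be the long exact sequence of the pair $(U_g^b, \tilde\partial \setminus \tilde P)$:
\[
H_2(\tilde\partial \setminus \tilde P) \to H_2(U_g^b) \to H_2(U_g^b, \tilde\partial\setminus\tilde P) \to H_1(\tilde\partial\setminus\tilde P) \to H_1(U_g^b).
\]

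Each component of $\tilde\partial \setminus \tilde P$ is a once-punctured torus, so $H_2(\tilde\partial \setminus \tilde P) = 0$. Since $b \ge 1$, the surface $\Sigma_g^b$ is homotopy equivalent to a graph and its tangent bundle is trivial, yielding a $\Mod_g^b$-equivariant homotopy equivalence $U_g^b \simeq \Sigma_g^b \times S^1$. By K\"unneth and the fact that $H_2(\Sigma_g^b) = 0$, one obtains $H_2(U_g^b) \cong H_1(\Sigma_g^b) \otimes H_1(S^1) = H_g^b$, where the isomorphism sends $[\gamma] \in H_g^b$ to the class of the torus $\gamma \times S^1$. This identification is $\Mod_g^b$-equivariant since mapping classes act on $S^1$-fibers via the derivative and fix the oriented fiber class $[\zeta]$.

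I would then identify the image of the connecting map as the kernel of $H_1(\tilde\partial\setminus\tilde P) \to H_1(U_g^b)$, which by naturality is just the homology inclusion. Since removing a point from a torus does not affect $H_1$, there is a canonical identification $H_1(\tilde\partial\setminus\tilde P) \cong H_1(\tilde\partial)$, generated by the $m_i$ and $\ell_i$. Under inclusion into $H_1(U_g^b) = H_g^b \oplus A[\zeta]$, we have $m_i \mapsto [\Delta_i]$ and $\ell_i \mapsto [\zeta]$. Thus a combination $\sum a_i m_i + \sum b_i \ell_i$ lies in the kernel iff $\sum a_i [\Delta_i] = 0$ in $H_g^b$ and $\sum b_i = 0$. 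Since the unique relation among the boundary classes in $H_g^b$ is $\sum [\Delta_i] = 0$ (cf.\ Lemma \ref{L:Kprops}), this kernel is free of rank $b$, spanned exactly by $m_1 + \dots + m_b, \ell_1 - \ell_b, \dots, \ell_{b-1} - \ell_b$.

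For $\Mod_g^b$-equivariance, I note that mapping classes fix $\partial$ pointwise, so their derivatives fix each $\tilde\Delta_i$ pointwise; hence $\Mod_g^b$ acts trivially on $H_1(\tilde\partial)$, and in particular trivially on the subspace $A^b$. Combining this with the equivariance of the other ingredients produces the desired short exact sequence of $\Mod_g^b$-modules. The argument is essentially a bookkeeping exercise; the only mild subtlety is keeping track of the Lefschetz duality identification and verifying that the isomorphism $H_2(U_g^b) \cong H_g^b$ is equivariant, which follows once one observes that the $S^1$-fiber class is canonically preserved by orientation-preserving homeomorphisms.
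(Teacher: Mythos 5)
Your proof is correct and is essentially the same as the paper's: the paper runs the long exact sequence of the triple $(U_g^b, \tilde\partial, \tilde P)$ in cohomology and then applies Lefschetz duality term by term, whereas you apply Lefschetz duality first and run the homology long exact sequence of the pair $(U_g^b, \tilde\partial\setminus\tilde P)$ — these are the same argument up to duality, with identical key steps (the K\"unneth identification $H_2(U_g^b)\cong H_g^b$ and the computation of the kernel of $H_1(\tilde\partial)\to H_1(U_g^b)$ using the single relation $\sum[\Delta_i]=0$).
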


\begin{proof}
    This will arise from the long exact sequence in cohomology for the triple $(U_g^b, \tilde \partial, \tilde P)$. This begins with
    \[
    0 \to H^1(U_g^b, \tilde \partial) \to H^1(U_g^b, \tilde P) \to H^1(\tilde \partial, \tilde P) \xrightarrow{\delta} H^2(U_g^b, \tilde \partial) \to \dots
    \]
    We claim that $H^1(U_g^b, \tilde \partial) \cong H_1(\Sigma_g^b) = H_g^b$. Indeed, by Lefschetz duality, $H^1(U_g^b, \tilde \partial) \cong H_2(U_g^b)$. We claim that $H_2(U_g^b) \cong H_1(\Sigma_g^b)$. Since $b \ge 1$, $U_g^b \to \Sigma_g^b$ admits a section (given by some nowhere-vanishing vector field) and hence is a product: $U_g^b \cong \Sigma_g^b \times S^1$. By the K\"unneth formula, since $H_2(\Sigma_g^b) = 0$, the claim follows. 

    Making the identification $H^1(U_g^b; \tilde \partial) \cong H_g^b$, we thus obtain the following short exact sequence:
    \[
    0 \to H_g^b \to H^1(U_g^b; \tilde P) \to \ker \delta \to 0.
    \]
    Our remaining task is to identify $\ker \delta$ with the span of $m_1 + \dots + m_b, \ell_1 - \ell_b, \dots, \ell_{b-1} - \ell_b$ inside $H_1(\tilde \partial)$. To begin with, $H^1(\tilde \partial, \tilde P) \cong H^1(\tilde \partial) \cong H_1(\tilde \partial)$ by Poincar\'e duality. Again by Lefschetz duality, $H^2(U_g^b; \tilde \partial) \cong H_1(U_g^b)$, and so $\delta: H^1(\tilde \partial, \tilde P) \to H^2(U_g^b, \tilde \partial)$ can be identified with the inclusion map $i: H_1(\tilde \partial) \to H_1(U_g^b) \cong H_g^b \oplus H_1(S^1)$. Under this identification, $i(m_i) = [\Delta_i]$, and $i(\ell_i) = [\zeta]$, the class of the $S^1$ fiber. Since $\sum [\Delta_i] = 0$ in $H_g^b$, the claim follows. 
\end{proof}

The long exact sequence in cohomology associated to this short exact sequence of coefficients begins
\[
0 \to (H_g^b)^{\Mod_g^b} \to (H^1(U_g^b, \tilde P))^{\Mod_g^b} \to \ker \delta \to H^1(\Mod_g^b; H_g^b).
\]
\begin{lem}\label{L:lowerbound}
    For $g \ge 2, b \ge 1$, the map $(H_g^b)^{\Mod_g^b} \to (H^1(U_g^b, \tilde P))^{\Mod_g^b}$ is an isomorphism. Consequently, $\ker \delta \to H^1(\Mod_g^b; H_g^b)$ is an injection.
\end{lem}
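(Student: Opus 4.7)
The plan is to prove the first assertion---surjectivity of the injective map $(H_g^b)^{\Mod_g^b} \hookrightarrow (H^1(U_g^b, \tilde P))^{\Mod_g^b}$---from which the second assertion (injectivity of the connecting map $\ker\delta \to H^1(\Mod_g^b; H_g^b)$) follows immediately by the exactness of the long exact sequence in group cohomology attached to the short exact sequence $0 \to H_g^b \to H^1(U_g^b, \tilde P) \to \ker\delta \to 0$. The task thus reduces to showing that for any invariant $W \in (H^1(U_g^b, \tilde P))^{\Mod_g^b}$ the restriction $r(W) \in \ker \delta$ vanishes.

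First I will invoke twist-linearity (Equation~\eqref{eqn:twistlin}) for Dehn twists around non-separating simple closed curves: for any non-separating $c$ and any simple closed curve $c'$, $(T_c^* W)(\hat{c'}) - W(\hat{c'}) = \langle c', c \rangle W(\hat c)$, and choosing $c'$ with $\langle c', c \rangle \ne 0$ yields $W(\hat c) = 0$ from invariance. Next, assuming $b \ge 2$, I will force $W(\zeta) = 0$ via the twist-linearity formula for arcs (available by Proposition~\ref{prop:ewnf}): apply it to an arc $\alpha$ joining two distinct boundary components and a separating curve $c$ crossing $\alpha$ that bounds a subsurface $S$ with $\chi(S) \ne 0$ (easy to arrange when $g \ge 2$). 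Homological coherence gives $W(\hat c) = \chi(S) W(\zeta)$, and invariance together with $\langle c, \alpha \rangle \ne 0$ forces $W(\zeta) = 0$. Finally, I will force $W(\hat \Delta_i) = 0$ for each $i$ by applying the boundary Dehn twist $T_{\Delta_i}$ (along an interior curve parallel to $\Delta_i$) to an arc $\alpha$ with one endpoint on $\Delta_i$: twist-linearity gives $(T_{\Delta_i}^* W)(\hat\alpha) - W(\hat\alpha) = \pm W(\hat \Delta_i)$, and invariance gives the vanishing. Combining these vanishings with the Poincar\'e-duality description of $r(W)$ on each boundary torus and the explicit generators of $\ker\delta$, one reads off $r(W) = 0$.

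The edge case $b = 1$ requires a different argument since arcs between distinct boundary components do not exist. Here I will use the isomorphism $H^1(U_g^1, \tilde p_1) \cong H^1(U_g^1)$ (immediate from the long exact sequence of the pair, since $U_g^1$ is connected), the Gysin short exact sequence $0 \to H^1(\Sigma_g^1) \to H^1(U_g^1) \to A \to 0$, and the classical Morita--Furuta--Trapp identification of the resulting connecting map $A \to H^1(\Mod_g^1; H^1(\Sigma_g^1)) \cong A$ with the (nontrivial) change-of-winding-number cocycle. Taking invariants yields $(H^1(U_g^1))^{\Mod_g^1} = 0$, matching $(H_g^1)^{\Mod_g^1} = K_g^1 = 0$.

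The step I anticipate being most delicate is the use of boundary twists in the third move of the general case: carefully computing $(T_{\Delta_i}^* W)(\hat\alpha)$ requires tracking tangent directions at arc endpoints lying on the boundary, which must be carried out within the relative winding number formalism of Section~\ref{SS:relwnf}.
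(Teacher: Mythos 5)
Your overall strategy is sound and genuinely different from the paper's: rather than computing that both invariant submodules are free of rank $b-1$ and concluding by a rank count (which is what the paper does, via the sequence $0 \to \tilde H^0(\tilde P) \to H^1(U_g^b,\tilde P)\to H^1(U_g^b)\to 0$ and the vanishing $H^1(U_g^b)^{\Mod_g^b}=0$), you aim to show directly that every invariant class restricts to zero in $\ker\delta$. Steps 1 and 3 are correct. But step 2 contains a genuine gap. If $c$ is a separating curve bounding a subsurface $S$ with $\partial S = \{c\}$ --- which is what you need for homological coherence to read $W(\hat c) = \chi(S)W(\zeta)$ with no other terms --- then both endpoints of any arc $\alpha$ joining boundary components lie outside $S$, every entry of $\alpha$ into $S$ is matched by an exit, and so $\langle c, \alpha\rangle = 0$: twist-linearity yields $0=0$. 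If instead you choose $c$ to separate the two boundary components joined by $\alpha$ (so that $\langle c,\alpha\rangle = \pm 1$), then $S$ contains some of the $\Delta_k$ and homological coherence reads $W(\hat c) + \sum_{\Delta_k \subset S} W(\hat\Delta_k) = \chi(S)W(\zeta)$; you cannot isolate $W(\zeta)$ without already knowing $W(\hat\Delta_k)=0$, which at that point in your argument you do not. As written, the two requirements you place on $c$ are incompatible.

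The gap is easily repaired, in either of two ways. You could run your step 3 first (it is independent of steps 1--2) and then take $c$ separating the endpoints of $\alpha$ and cutting off a positive-genus subsurface, so that the $W(\hat\Delta_k)$ terms vanish and $\chi(S)\ne 0$. More efficiently, step 1 already gives $W(\zeta)=0$: take disjoint nonseparating curves $c, c'$ cobounding a $\Sigma_1^2$; homological coherence gives $[\hat c'] = [\hat c] + 2[\zeta]$ when suitably oriented, and since $W(\hat c) = W(\hat c')=0$ by step 1, one gets $2W(\zeta)=0$ and hence $W(\zeta)=0$, the coefficients being torsion-free. (This bounding-pair device is exactly how the paper proves $H^1(U_g^b)^{\Mod_g^b}=0$.) Your separate treatment of $b=1$ via Morita--Furuta--Trapp is correct but heavier than necessary: with the bounding-pair fix, $b=1$ needs no special argument, since $[\hat\Delta_1]$ is itself a multiple of $[\zeta]$ by homological coherence, so $W(m_1)$ vanishes once $W(\zeta)$ does.
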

\begin{proof}
    We will show that each of $(H_g^b)^{\Mod_g^b}$ and $(H^1(U_g^b, \tilde P))^{\Mod_g^b}$ are free of rank $b-1$. Since $\ker \delta \le H^1(\tilde \partial, \tilde P)$ is also free, this will show the claim. 

    Recall from Lemma \ref{L:Kprops} that $(H_g^b)^{\Mod_g^b} = K_g^b$ is free of rank $b-1$. It remains to show that $(H^1(U_g^b, \tilde P))^{\Mod_g^b} \cong A^{b-1}$. The long exact sequence for the pair $(U, \tilde P)$ induces the short exact sequence
    \[
    0 \to \tilde H^0(\tilde P) \to H^1(U_g^b, \tilde P) \to H^1(U_g^b) \to 0.
    \]
    note that $\tilde H^0(\tilde P)$ is a trivial $\Mod_g^b$-submodule of rank $b-1$. Thus to finish the argument, it suffices to establish the following.\\
    
    \para{Claim} $H^1(U_g^b)^{\Mod_g^b} = 0$; consequently $(H^1(U_g^b, \tilde P))^{\Mod_g^b} = \tilde H^0(\tilde P)$. \\

    To establish the claim, we examine the sequence
    \[
    0 \to H^1(\Sigma_g^b) \to H^1(U_g^b) \to H^1(S^1) \to 0.
    \]
    A first observation is that $H^1(\Sigma_g^b)^{\Mod_g^b} = 0$. For this, we identify $H^1(\Sigma_g^b) \cong H_1(\Sigma_g^b, \partial)$. By Lefschetz duality, this space is dual under the intersection pairing to $H_1(\Sigma_g^b)$. Thus every $x \in H_1(\Sigma_g^b, \partial)$ is moved by the action of some Dehn twist $T_c$ such that $\pair{[c],x} \ne 0$. 

    We finally claim that $H^1(U_g^b)^{\Mod_g^b} \le H^1(\Sigma_g^b)$. Let $\psi \in H^1(U_g^b)$ be a class with nontrivial restriction to $H^1(S^1)$. Let $c \subset \Sigma_g^b$ be a nonseparating simple closed curve, let $c'$ be disjoint and such that $c \cup c'$ bounds a subsurface $\Sigma_1^2 \subset \Sigma_g^b$, and let $f \in \Mod_g^b$ satisfy $f(c) = c'$. Let $\hat c,\hat c'$ be the Johnson lifts of $c, c'$ (qv. Section \ref{subsection:wnf}). Then by homological coherence (Equation \eqref{eqn:homcoh}), suitably oriented, $[c'] = [c] + 2[\zeta]$. Consequently, 
    \[
    f^*\psi(c') = \psi(c) = \psi(c') - 2 \psi(\zeta) \ne \psi(c),
    \]
    showing that any such $\psi$ is not globally fixed.
\end{proof}

\begin{proof}[Proof of Theorem \ref{T:MCGbdrycohom}]
Proposition \ref{P:upperbound} shows that $H^1(\Mod_g^b; H_g^b)$ is free of rank at most $b$, while Lemma \ref{L:lowerbound} shows that the rank is at least $b$. Thus the connecting map $\ker \delta \to H^1(\Mod_g^b; H_g^b)$ is an {\em isomorphism}. In general, if $0 \to U \to V \to W \to 0$ is a short exact sequence of $G$-modules with $W^G = W$ and the connecting map $\bar C: W^G \to H^1(G;U)$ an isomorphism, this lifts to the cocycle level as an isomorphism $C: V \to Z^1(G; U)$, with $C(v)$ given by the formula $C(v)(g) = (g-1)v \in U$. 
\end{proof}

        \subsubsection{Construction of cocycles} \label{SSS:cocycles}
            The purpose of this section is to give explicit descriptions of the isomorphisms $\bar C: \ker \delta \to H^1(\Mod_g^b; H_g^b)$ and $C: H^1(U_g^b, \tilde P) \to Z^1(\Mod_g^b; H_g^b)$. Recall that $\ker \delta \le H_1(\tilde \partial)$ has a basis given by $\ell_1- \ell_b, \dots, \ell_{b-1} - \ell_b, \sum m_i$, where $\ell_i$ is a longitude (the $S^1$ fiber above a fixed point in $\Delta_i$), and $m_i$ is a meridian, projecting down to $\Delta_i$. 
            After piecing the various identifications together, the classes $\ell_i - \ell_b$ admit a very simple description, as a ``change of relative homology class'' functional.

            \begin{construction}
            \label{L:lij}
                For $1 \le i < j \le b$, the image $\bar C(\ell_i - \ell_j) \in H^1(\Mod_g^b; H_g^b)$ can be described at the cocycle level as follows: choose an arc $\alpha_{ij} \subset \Sigma_g^b$ connecting $\Delta_i$ to $\Delta_j$. Given $f \in \Mod_g^b$, define
                \[
                C(\alpha_{ij})(f) = f_*\alpha_{ij} - \alpha_{ij} \in H_g^b.
                \]
                \qed
            \end{construction}

           The description of $\bar C(\sum m_i)$ is slightly more complicated, involving the theory of relative winding number functions as developed in Section \ref{SS:relwnf}. Recall the setup: one chooses a set $\wt P$ of basepoints, one on each component of $\partial U_g^b$. As explained in Lemma \ref{L:relwnf}, a non-vanishing vector field $\psi$ avoiding $\tilde P$ determines, by Lefschetz duality, a class $W_\psi \in H^1(U_g^b,\wt P)$, a relative winding number function.

           \begin{construction}\label{L:summi}
               Let $\psi: \Sigma_g^b \to U_g^b$ be a non-vanishing vector field avoiding $\tilde P$. This defines a map
               \[
               C(\psi): \Mod_g^b \to (H_1(\Sigma_g^b, \partial))^* \cong H_g^b
               \]
               as follows: given $f \in \Mod_g^b$ and $a \in H_1(\Sigma_g^b, \partial)$, realize $a$ as a (sum of) arcs and simple closed curves in $U_g^b$ based at $\tilde P$; call this $\alpha$. Then set 
               \[
               C(\psi)(f)(a) = W_\psi^+(f(\alpha)) - W_\psi^+(\alpha) = W_\psi^+((f_*-1)\alpha). 
               \]\qed
           \end{construction}

   Let $\psi: \Sigma_g^b \to U_g^b$ be a non-vanishing vector field. The {\em boundary signature} of $\psi$ is the tuple $B_\psi = (W_\psi(\Delta_1), \dots, W_\psi(\Delta_b))$.

        \begin{lem}\label{L:cohomVF}
            Let $\psi, \phi: \Sigma_g^b \to U_g^b$ be non-vanishing vector fields avoiding $\tilde P$. Then there is an equality $[C(\psi)] = [C(\phi)]$ in $H^1(\Mod_g^b; H_g^b)$ if and only if $B_\psi = B_\phi$. Moreover, if $C \in Z^1(\Mod_g^b, H_g^b)$ is cohomologous to some $C(\psi)$, then $C = C(\phi)$ for some non-vanishing vector field $\phi: \Sigma_g^b \to U_g^b$ avoiding $\tilde P$, necessarily with $B_\psi = B_\phi$.
        \end{lem}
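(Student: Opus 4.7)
The strategy is to use the cocycle-level isomorphism $C \colon H^1(U_g^b, \tilde P) \to Z^1(\Mod_g^b; H_g^b)$ established at the end of the proof of Theorem~\ref{T:MCGbdrycohom}, which sends $v$ to the cocycle $f \mapsto (f^* - 1)v$. The first step is to identify the cocycle $C(\psi)$ of Construction~\ref{L:summi} with $C(W_\psi)$: unpacking definitions, both send $f \in \Mod_g^b$ to the functional on $H_1(\Sigma_g^b, \partial)$ taking $a$ to $((f^* - 1)W_\psi)(\tilde \alpha)$, where $\tilde \alpha \in H_1(U_g^b, \tilde P)$ is the Johnson lift of a representative of $a$. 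The output lies in $H_g^b = H^1(U_g^b, \tilde \partial) \subset H^1(U_g^b, \tilde P)$ because $\Mod_g^b$ fixes $\tilde \partial$ pointwise.

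For the first assertion, it then follows that $[C(\psi)] = \bar C(\pi(W_\psi))$, where $\pi \colon H^1(U_g^b, \tilde P) \to \ker \delta$ is restriction to the boundary and $\bar C \colon \ker \delta \to H^1(\Mod_g^b; H_g^b)$ is the connecting isomorphism. Injectivity of $\bar C$ reduces the equivalence $[C(\psi)] = [C(\phi)] \iff B_\psi = B_\phi$ to showing that the assignment $\psi \mapsto \pi(W_\psi)$ factors through $B_\psi$ by a bijection. This is handled boundary-torus by boundary-torus: on $\tilde \Delta_i$ the section $\psi|_{\Delta_i}$ represents $m_i + W_\psi(\Delta_i)\ell_i \in H_1(\tilde \Delta_i)$, and $W_\psi|_{\tilde \Delta_i}$ is its Poincar\'e dual; hence $\pi(W_\psi)$ is uniquely determined by and uniquely determines the tuple $B_\psi = (W_\psi(\Delta_1), \dots, W_\psi(\Delta_b))$, subject only to the homological coherence relation $\sum W_\psi(\Delta_i) = \chi(\Sigma_g^b)$ (which matches $\sum m_i \in \ker \delta$).

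For the second assertion, suppose $C$ is cohomologous to $C(\psi)$. Since the assignment $v \mapsto C(v)$ is a cocycle-level isomorphism, $C = C(w)$ for a unique $w \in H^1(U_g^b, \tilde P)$, and equality of cohomology classes forces $\pi(w) = \pi(W_\psi)$, so $w = W_\psi + h$ for some $h \in H^1(U_g^b, \tilde \partial)$. The remaining task is to realize $w$ as $W_\phi$ for a non-vanishing vector field $\phi$ avoiding $\tilde P$. I would do this concretely: identify $h$ via Lefschetz duality with a class in $H^1(\Sigma_g^b, \partial)$, represent it by a smooth map $\theta \colon (\Sigma_g^b, \partial) \to (S^1, *)$, and set $\phi(x) := R_{\theta(x)} \psi(x)$, where $R_\theta$ denotes rotation of the tangent plane. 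Because $\theta$ vanishes on $\partial$, the vector field $\phi$ has the same boundary behavior as $\psi$ and still avoids $\tilde P$; the standard rotation formula gives $W_\phi = W_\psi + p^*[\theta] = w$. Then $B_\phi = B_\psi$ is immediate from the first assertion.

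The main obstacle is the careful bookkeeping of the several identifications (Lefschetz dualities, Poincar\'e dualities, and the Johnson lift) relating the cocycle picture of Construction~\ref{L:summi} to the cohomology-class picture of the isomorphism $C$, together with the verification that the rotation construction has the expected additive effect $W_{R_\theta \psi} = W_\psi + p^*[\theta]$ in the relative setting, and that this modification preserves the condition of avoiding $\tilde P$.
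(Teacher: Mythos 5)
Your argument is correct, but it packages the first claim differently from the paper, so a comparison is worthwhile. The paper verifies the equivalence by hand: it writes $W_\psi^+ - W_\phi^+ = p^*v$ with $v \in H^1(\Sigma_g^b, P)$ (the relative form of Proposition \ref{prop:framingchar}), observes that $B_\psi = B_\phi$ is equivalent to $v$ vanishing on each $[\Delta_i]$, i.e.\ to $v$ lying in the copy of $H_g^b$ that parametrizes coboundaries, and for the converse evaluates the coboundary identity on the pair $(T_{\Delta_i}, a_i)$ for an arc $a_i$ leaving $\Delta_i$, which isolates $W_\psi^+(\Delta_i) - W_\phi^+(\Delta_i)$ while the coboundary side dies because $[\Delta_i] = 0$ in $H_1(\Sigma_g^b, \partial)$. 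You instead route everything through the connecting map: $[C(\psi)] = \bar C(\pi(W_\psi^+))$, injectivity of $\bar C$ (Lemma \ref{L:lowerbound}) converts the cohomological statement into the boundary-restriction statement $\pi(W_\psi^+) = \pi(W_\phi^+)$, and a fiberwise computation on the boundary tori identifies this with $B_\psi = B_\phi$. This is more structural and arguably cleaner, at the cost of leaning on the full strength of Lemma \ref{L:lowerbound} where the paper uses a two-line computation, and of requiring careful bookkeeping of identifications (which you acknowledge). For the second claim your argument is essentially the paper's, with the rotation construction $\phi = R_\theta \psi$ spelling out what the paper delegates to Proposition \ref{prop:framingchar}.

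Two small corrections, neither fatal. First, $v \mapsto C(v)$ is surjective onto $Z^1(\Mod_g^b; H_g^b)$ but not injective --- its kernel is $(H^1(U_g^b, \tilde P))^{\Mod_g^b} = K_g^b$ --- so ``$C = C(w)$ for a unique $w$'' should read ``for some $w$, unique up to $K_g^b \le H^1(U_g^b, \tilde \partial)$''; the ambiguity is absorbed into $h$, so nothing breaks, and you could sidestep surjectivity entirely by writing $C - C(\psi) = \delta(y) = C(y)$ with $y \in H_g^b$. Second, the formula $[\psi|_{\Delta_i}] = m_i + W_\psi(\Delta_i)\ell_i$ depends on the non-canonical choice of meridian $m_i$: with the paper's normalization of $\ker\delta$ your literal expression cannot lie in the span of $\sum m_i$ and the $\ell_i - \ell_b$, since homological coherence gives $\sum_i W_\psi(\Delta_i) = \chi(\Sigma_g^b) \ne 0$. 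What is meridian-independent, and all your argument needs, is the difference $\pi(W_\psi^+) - \pi(W_\phi^+) = \sum_i \bigl(W_\psi(\Delta_i) - W_\phi(\Delta_i)\bigr)\ell_i$, so the equivalence with $B_\psi = B_\phi$ stands.
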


        \begin{proof}
            By definition, the classes $C(\psi), C(\phi)$ are cohomologous if and only if there is $y \in H_g^b$ such that for all $a \in H_1(\Sigma_g^b, \partial)$ and all $f \in \Mod_g^b$,
            \begin{align}\label{E:Wcoboundary}
                (W_\psi^+ - W_\phi^+)((f_*-1)a) = \pair{(f_*-1)y,a}.
            \end{align}
            As in Proposition \ref{prop:framingchar}, the difference $W_\psi^+ - W_\phi^+ \in H^1(U_g^b,\tilde P)$ can be expressed as 
            \[
            W_\psi^+ - W_\phi^+ = p^*v
            \]
            for some $v \in H^1(\Sigma_g^b, P)$, where $p: U_g^b \to \Sigma_g^b$ is the projection and $P = p(\tilde P) \subset \Sigma_g^b$.

            It is readily seen that the condition $B_\psi = B_\phi$ is equivalent to the condition $v(\Delta_i) = 0$ for all components $\Delta_i \in \partial \Sigma_g^b$, and that in turn this is equivalent to the condition $v \in H^1(\Sigma_g,P) \le H^1(\Sigma_g^b,P) \cong H_g^b$. Thus if $B_\psi = B_\phi$, then after making the necessary identifications, $y = v$ is the required element. 
            
            Conversely, if $[C(\psi)] = [C(\phi)]$ and \eqref{E:Wcoboundary} holds, one can choose an arc $a_i$ connecting $\Delta_i$ to some other boundary component. Then $f = T_{\Delta_i}$ satisfies $(f_*-1)a_i = \Delta_i$, so the left-hand side of \eqref{E:Wcoboundary} measures the difference $W^+_\psi(\Delta_i) - W^+_\phi(\Delta_i)$. On the right hand side, 
            \[
            \pair{(f_*-1)y, a_i} = \pair{y, (f_*^{-1}-1)a_i} = \pair{y, - \Delta_i} = 0,
            \]
            since $[\Delta_i] = 0$ in $H_1(\Sigma_g^b, \partial)$. This proves the first claim.

            To prove the second claim, suppose that $C \in Z^1(\Mod_g^b, H_g^b)$ is cohomologous to some $C(\psi)$: say $C = C(\psi) + \delta(y)$ for $y \in H_g^b$ and $\delta: H_g^b \to Z^1(\Mod_g^b, H_g^b)$ the differential. Arguing as above, $y \in H_g^b \cong H^1(\Sigma_g,P)$ pulls back to $p^*y \in H^1(U_g^b, \tilde P)$, and then (following Proposition \ref{prop:framingchar}) defining $\phi$ to be a non-vanishing vector field in the cohomology class $W^+_\phi = W^+_\psi + p^*y \in H^1(U_g^b, \tilde P)$, the claim follows. 
        \end{proof}

    \subsection{Descending to the punctured setting}\label{SS:Aproofs}

        \subsubsection{From boundary components to punctures}
            \begin{prop} \label{P:puncturevanish}
                For $g \ge 2$ and $b \ge 2$,
                \[
                H^1(\Mod_{g,b}; H_g^b) = 0.
                \]
            \end{prop}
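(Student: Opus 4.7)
My plan is to apply the five-term exact sequence in group cohomology to the Birman-type short exact sequence
\[
1 \to \bZ^b \to \Mod_g^b \to \Mod_{g,b} \to 1,
\]
whose kernel is generated by the boundary Dehn twists $T_{\Delta_1}, \ldots, T_{\Delta_b}$. These twists are central in $\Mod_g^b$, and by twist-linearity combined with Lemma \ref{L:Kprops}(4) (the $[\Delta_k]$ lie in the kernel of the intersection form), $\bZ^b$ acts trivially on $H_g^b$. The five-term sequence then begins
\[
0 \to H^1(\Mod_{g,b}; H_g^b) \xrightarrow{\mathrm{inf}} H^1(\Mod_g^b; H_g^b) \xrightarrow{\mathrm{res}} \Hom(\bZ^b, H_g^b)^{\Mod_{g,b}},
\]
and since $\Mod_{g,b}$ acts trivially on the central subgroup $\bZ^b$ and $(H_g^b)^{\Mod_{g,b}} = K_g^b$ by Lemma \ref{L:Kprops}(3), the target simplifies to $\Hom(\bZ^b, K_g^b)$. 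It therefore suffices to show that $\mathrm{res}$ is injective.

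I will use the basis of $H^1(\Mod_g^b; H_g^b) \cong A^b$ provided by Theorem \ref{T:MCGbdrycohom}: the $b-1$ change-of-relative-homology cocycles $C(\alpha_{ib})$ of Construction \ref{L:lij} corresponding to arcs from $\Delta_i$ to $\Delta_b$, together with a single change-of-winding-number cocycle $C(\psi)$ of Construction \ref{L:summi} corresponding to the remaining class $\sum m_i \in \ker \delta$. I then compute the restriction on each basis element. For $C(\alpha_{ib})$, the twist $T_{\Delta_k}$ drags the endpoint on $\Delta_k$ once around (only if $k \in \{i,b\}$), yielding
\[
\mathrm{res}(C(\alpha_{ib}))(T_{\Delta_k}) \;=\; (T_{\Delta_k})_*\alpha_{ib} - \alpha_{ib} \;=\; \pm [\Delta_k] \text{ if } k \in \{i,b\}, \quad 0 \text{ otherwise}.
\]
For $C(\psi)$, applying $T_{\Delta_k}$ to an arc $a$ with an endpoint on $\Delta_k$ adds a loop whose Johnson lift is $\hat\Delta_k$, of winding number $W_\psi(\Delta_k)$; the resulting functional on $H_1(\Sigma_g^b, \partial)$ is $W_\psi(\Delta_k)$ times the signed endpoint count on $\Delta_k$, which under Lefschetz duality is represented by $W_\psi(\Delta_k)\,[\Delta_k] \in H_g^b$.

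With these formulas in hand, I will test injectivity on an arbitrary combination $\Phi = \sum_{i=1}^{b-1} c_i C(\alpha_{ib}) + c_b C(\psi)$ whose restriction vanishes. Expressing each $\Phi(T_{\Delta_k}) \in K_g^b$ in the basis $[\Delta_1], \ldots, [\Delta_{b-1}]$ (with $[\Delta_b] = -\sum_{i<b}[\Delta_i]$), the $b-1$ columns $T_{\Delta_i}$ for $i<b$ each give a diagonal equation forcing $c_i$ to be (a sign times) $c_b W_\psi(\Delta_i)$. Substituting into the remaining equation from column $T_{\Delta_b}$ and invoking the homological coherence identity $\sum_k W_\psi(\Delta_k) = \chi(\Sigma_g^b)$ collapses the relation to
\[
c_b\cdot \chi(\Sigma_g^b) = 0.
\]
Since $\chi(\Sigma_g^b) = 2-2g-b \leq -4$ for $g \ge 2, b \ge 2$, we conclude $c_b = 0$ and hence all $c_i = 0$, giving injectivity.

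The main obstacle is the careful sign-bookkeeping in the two restriction computations, especially the signs arising at the boundary component $\Delta_b$ where all the $\alpha_{ib}$'s terminate; one needs the signs to line up so that the residual relation is exactly $\chi(\Sigma_g^b)\cdot c_b = 0$ rather than some twisted variant. A secondary issue is that Theorem \ref{T:MCGbdrycohom} is stated for $g \ge 3$ and $b \ge 3$, whereas the proposition covers $g \ge 2$ and $b \ge 2$; the edge cases can be handled by combining the upper bound of Proposition \ref{P:upperbound} (valid for $g,b \ge 2$) with the lower bound of Lemma \ref{L:lowerbound} to obtain the same description of $H^1(\Mod_g^b; H_g^b)$ and then running the same argument.
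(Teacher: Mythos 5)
Your proposal is correct and follows essentially the same route as the paper: the inflation--restriction sequence for $1 \to \bZ^b \to \Mod_g^b \to \Mod_{g,b} \to 1$, the identification of the target with $\Hom(\bZ^b, K_g^b)$, the explicit restriction formulas for $C(\alpha_{ib})$ and $C(\psi)$ on the boundary twists, and the final linear-independence computation via homological coherence and $\chi(\Sigma_g^b) \ne 0$. Your remark about assembling the $g\ge 2$, $b\ge 2$ range from Proposition \ref{P:upperbound} and Lemma \ref{L:lowerbound} rather than quoting Theorem \ref{T:MCGbdrycohom} directly is a sound way to cover the edge cases.
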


            \begin{proof}
                We analyze the inflation-restriction sequence associated to the short exact sequence of groups
                \[
                0 \to \bZ^b \to \Mod_g^b \to \Mod_{g,b} \to 1
                \]
                induced by capping all boundary components with punctured disks. Taking coefficients in $H_g^b$ (recalling that $H_g^b = H_1(\Sigma_g^b) \cong H_1(\Sigma_{g,b})$, so that $H_g^b$ is a $\Mod_{g,b}$-module in a natural way), this begins
                \[
                0 \to H^1(\Mod_{g,b};H_g^b) \to H^1(\Mod_g^b; H_g^b) \to H^1(\bZ^b; H_g^b)^{\Mod_g^b}.
                \]
                To prove the claim, it therefore suffices to show that $H^1(\Mod_g^b; H_g^b) \to H^1(\bZ^b; H_g^b)^{\Mod_g^b}$ is an injection.

                To begin with, the target $H^1(\bZ^b; H_g^b)^{\Mod_g^b}$ can be simplified to $\Hom(\bZ^b,K_g^b)$. The work of Section \ref{SSS:cocycles} provides us with an explicit set of cocycle representatives forming a basis for $H^1(\Mod_g^b; H_g^b)$. As in  Construction \ref{L:lij}, we let $\alpha_{ib}$ be an arc connecting $\Delta_i, \Delta_b$, and then the restriction of $C(\alpha_{ib})$ to $\bZ^b$  is computed as
                \[
                C(\alpha_{ib})(\Delta_j) = \begin{cases}
                                                \Delta_i & j = i\\
                                                -\Delta_b & j = b\\
                                                0           &\mbox{otherwise.}
                        \end{cases}
                \]
                To compute the image of $\bar C(\sum m_i)$, let $\psi$ be a non-vanishing vector field on $\Sigma_g^b$. By the twist-linearity formula, the effect of $T_{\Delta_i}$ on an arc based at $\Delta_i$ is to add $W_\psi(\Delta_i)$. After making the necessary identifications, this shows that
                \begin{equation}\label{E:cpsi}
                    C(\psi)(\Delta_j) = W_\psi(\Delta_j) \Delta_j.
                \end{equation}
             
                These functions are linearly independent in $\Hom(\bZ^b, K_g^b)$. To see this, recall that $K_g^b = \pair{\Delta_1, \dots, \Delta_b}/\sum \Delta_i$; we therefore identify $K_g^b$ with the subspace spanned by $\Delta_1, \dots, \Delta_{b-1}$ by systematically identifying $\Delta_b = -(\Delta_1 + \dots + \Delta_{b-1})$ (though this will not be relevant in what follows).
                
                Linear independence of the set spanned by the classes $C(\alpha_{ib})$ is clear, since only $C(\alpha_{ib})$ takes a nonzero value on $\Delta_i$ for $1 \le i \le b-1$. For this same reason, if $C(\psi)$ could be expressed in the span of such classes, then necessarily such an expression would have to be
                \[
                C(\psi) = \sum_{i = 1}^{b-1} W_\psi(\Delta_i) C(\alpha_{ib}),
                \]
                in order to make the values on $\Delta_1, \dots, \Delta_{b-1}$ correct. But by homological coherence (Equation \eqref{eqn:homcoh}),
                \[
                C(\psi)(\Delta_b) = W_\psi(\Delta_b)\Delta_b = \left(\chi(\Sigma_g^b) - \sum_{i = 1}^{b-1} W_\psi(\Delta_i)\right) \Delta_b,
                \]
                while
                \[
                \sum_{i = 1}^{b-1} W_\psi(\Delta_i) C(\alpha_{ib}) (\Delta_b) = \left(- \sum_{i = 1}^{b-1} W_\psi(\Delta_i)\right) \Delta_b.
                \]
                As $\chi(\Sigma_g^b) \ne 0$, this proves linear independence.
            \end{proof}

        \subsubsection{Forgetting a puncture}

        \begin{lem}\label{L:barCpsi}
            Let $\xi_0$ be a non-vanishing vector field on $\Sigma_g^b$ for which $W_{\xi_0}(\Delta_1) = \dots = W_{\xi_0}(\Delta_{b-1}) = 0$. Then $C(\xi_0) \in Z^1(\Mod_g^b; H_g^b)$ descends to a cocycle $\overline{C(\xi_0)} \in Z^1(\Mod_{g,b}; H_g^{b-1})$.
        \end{lem}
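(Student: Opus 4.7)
The plan is to define $\overline{C(\xi_0)}$ as the composition of $C(\xi_0)$ with the natural projection $q \colon H_g^b \to H_g^{b-1}$ induced by capping the boundary component $\Delta_b$ with a disk, and then verify that this composition factors through the quotient $\Mod_g^b \to \Mod_{g,b}$. First I would observe that $q$ is $\Mod_g^b$-equivariant, where $\Mod_g^b$ acts on $H_g^{b-1}$ through the composition $\Mod_g^b \to \Mod_{g,b} \to \Mod_{g,b-1}$ (the first map killing all boundary Dehn twists, the second the standard forgetful homomorphism ignoring the $b$-th marked point). Since the boundary Dehn twists $T_{\Delta_j}$ act trivially on $H_g^b$, the $\Mod_g^b$-action on $H_g^b$ already factors through $\Mod_{g,b}$, and equivariance of $q$ follows from the naturality of the cap/forget construction. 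Consequently, $q \circ C(\xi_0)$ defines a cocycle in $Z^1(\Mod_g^b; H_g^{b-1})$.

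The key step is to check that $q \circ C(\xi_0)$ vanishes on the kernel $\bZ^b$ of $\Mod_g^b \to \Mod_{g,b}$, which is generated by the boundary Dehn twists $T_{\Delta_1}, \dots, T_{\Delta_b}$. By Equation \eqref{E:cpsi},
\[
C(\xi_0)(T_{\Delta_j}) = W_{\xi_0}(\Delta_j)\,[\Delta_j] \in H_g^b.
\]
For $1 \le j \le b-1$ this vanishes outright by the hypothesis $W_{\xi_0}(\Delta_j) = 0$. For $j = b$, the class $[\Delta_b]$ lies in $\ker q$ (it bounds a disk after capping), so $q(C(\xi_0)(T_{\Delta_b})) = 0$ regardless of the value of $W_{\xi_0}(\Delta_b)$ (which, incidentally, is forced to equal $\chi(\Sigma_g^b)$ by homological coherence). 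Therefore $q \circ C(\xi_0)$ descends to a cocycle $\overline{C(\xi_0)} \in Z^1(\Mod_{g,b}; H_g^{b-1})$, as desired.

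The main obstacle is essentially organizational: one must correctly set up the module structures on $H_g^b$ and $H_g^{b-1}$ and the various quotient and forgetful maps between mapping class groups. The substantive content is entirely encapsulated in Equation \eqref{E:cpsi}, and the hypothesis on $\xi_0$ is precisely calibrated so that the first $b-1$ boundary twists already annihilate $C(\xi_0)$ in $H_g^b$, while the contribution of the final boundary twist is automatically killed by the projection $q$.
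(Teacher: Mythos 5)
Your proof is correct and follows essentially the same route as the paper: both arguments invoke Equation \eqref{E:cpsi} to see that $C(\xi_0)(T_{\Delta_j}) = W_{\xi_0}(\Delta_j)[\Delta_j]$ vanishes for $j \le b-1$ by hypothesis, and that the $j = b$ contribution dies under the projection $H_g^b \to H_g^{b-1}$ killing $[\Delta_b]$, so the projected cocycle vanishes on the kernel $\bZ^b$ and descends. Your added remarks on equivariance of the projection and on $W_{\xi_0}(\Delta_b) = \chi(\Sigma_g^b)$ are consistent with what the paper leaves implicit.
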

        \begin{proof}
            From \eqref{E:cpsi}, $C(\xi_0)$ vanishes on $\Delta_1, \dots, \Delta_{b-1} \le \bZ^b \le \Mod_g^b$, and $C(\xi_0)(\Delta_b) = \chi(\Sigma_g^b)\Delta_b$. Modding out by $\Delta_b \le H_g^b$, this shows that the projection $\overline{C(\xi_0)}: \Mod_g^b \to H_g^{b-1}$ vanishes on the kernel $\bZ^b$ of the boundary-capping map $\Mod_g^b \to \Mod_{g,b}$, and hence descends to $\overline{C(\xi_0)} \in Z^1(\Mod_{g,b}; H_g^{b-1})$ as claimed.
        \end{proof}

        Here we prove Theorem \ref{T:MainMCGCohomology}.

        \begin{proof}[Proof of Theorem \ref{T:MainMCGCohomology}]
            Consider the Birman exact sequence
            \[
            1 \to \pi_1 \Sigma_{g,n} \to \Mod_{g,n+1} \to \Mod_{g,n} \to 1
            \]
            induced by forgetting the $n+1^{st}$ puncture. We consider the associated inflation-restriction sequence, taking coefficients in $H_g^n$:
            \[
            0 \to H^1(\Mod_{g,n}; H_g^n) \to H^1(\Mod_{g,n+1}; H_g^n) \to H^1(\Sigma_{g,n}; H_g^n)^{\Mod_{g,n}} \to \dots
            \]
            As $n \ge 2$, Proposition \ref{P:puncturevanish} implies the vanishing of the first term. Corollary \ref{C:EquivariantH} asserts that 
            \[
            H^1(\Sigma_{g,n}; H_g^n)^{\Mod_{g,n}} \cong \Hom(H_g^n, H_g^n)^{\Mod_{g,n}} \cong A.
            \]
            Thus, this sequence simplifies as shown:
            \[
            0 \to H^1(\Mod_{g,n+1}; H_g^n) \to A \to \dots
            \]
            Thus to prove Theorem \ref{T:MainMCGCohomology}, we need only show that the class $\overline{C(\xi_0)} \in Z^1(\Mod_{g,n+1}; H_g^n)$ (as constructed in Lemma \ref{L:barCpsi}) has nontrivial restriction to $H^1(\Sigma_{g,n}; H_g^n)^{\Mod_{g,n}}$.

            This can be computed directly. Let $\gamma \in \pi_1 \Sigma_{g,n}$ be given. As an element of $\Mod_{g,n+1}$, this is realized by pushing the $n+1^{st}$ marked point along the loop $\gamma$. As a mapping class, this is given as $T_{\gamma_R} T_{\gamma_L}^{-1}$, where $\gamma_L, \gamma_R$ are simple closed curves on $\Sigma_{g,n+1}$ obtained by pushing $\gamma$ off the last marked point to the left/right, respectively. 
            Combining twist-linearity and homological coherence, the effect on a relative class $\alpha \in H_1(\Sigma_{g,n},P)$ is seen to be as 
            \[
            \gamma \cdot \alpha = \alpha + \chi(\Sigma_{g,n+1})\pair{[\gamma], \alpha}[\gamma].
            \]
            Unraveling definitions, this shows that $\overline{C(\xi_0)}$ is sent to the nonzero class $\chi(\Sigma_{g,n+1})\Id \in  \Hom(H_g^n, H_g^n)^{\Mod_{g,n}}$.
        \end{proof}

    \subsection{Proof of Proposition \ref{P:ArcFramedMCG}}\label{SS:Aarcframed}

Let $\eta, \xi_0$ be  non-vanishing vector fields on the punctured surface $\Sigma_{g,n+1}$, where $\xi_0$ has zero winding number around each of the first $n$ marked points; $\eta$ can be arbitrary. Our goal here is to identify conditions under which the cocycle $\overline{C(\xi_0)}$ generating $H^1(\Mod_{g,n+1}; H_g^n)$ vanishes when restricted to a subgroup of the framed mapping class group $\Mod_{g,n+1}[\eta]$.

In this setting one must work with both punctures and marked points. The $n+1$ marked points $\{p_1, \dots, p_{n+1}\}$ on $\Sigma_{g,n+1}$ will be denoted $Q$, and the set of the first $n$ of these will be denoted $Q'$. As before, we will let $\Delta_i$ denote a loop around $p_i$, oriented with $p_i$ to the right. There are natural identifications
\[
H_1(\Sigma_g \setminus Q) \cong H_g^{n+1} \quad \mbox{and} \quad H_1(\Sigma_g \setminus Q') \cong H_g^n.
\]
We will also make use of the homological boundary maps
\[
\delta: H_1(\Sigma_g,Q) \to \wt H_0(Q) \quad \mbox{and} \quad \delta: H_1(\Sigma_g, Q') \to \wt H_0(Q').
\]

In preparation for Proposition \ref{P:ArcFramedMCG}, define the element 
\[
D_\eta = \sum_{i = 1}^{n+1} (W_{\xi_0}(\Delta_i) - W_{\eta}(\Delta_i)) p_i \in \wt H_0(Q).
\]
We also let $T = \{\alpha_1, \dots, \alpha_{n-1}\}$ be a set of $n-1$ arcs on the marked surface $(\Sigma_g, Q')$ such that their images in $\wt H_0(Q')$ forms a basis; we call such a set a {\em basis of arcs}. Since each $\alpha_i$ is based at $Q'$, the winding number $W_{\xi_0}(\alpha_i) \in \bZ$ is defined.

\begin{prop}\label{P:ArcFramedMCG}
 Let $G \leq \Mod_{g,n+1}[\eta]$. The restriction of $\overline{C(\xi_0)} \in H^1(\Mod_{g,n+1}; H_{g}^n)$ to $G$ is a coboundary if and only if
 \begin{enumerate}[label=(\alph*)]
 \item There is an element $\gamma \in H_1(\Sigma_g, Q)$ with boundary $\delta(\gamma) = D_\eta \in \wt H_0(Q)$ such that $g \gamma = \gamma$ for all $g \in G$,
 \item There is a non-vanishing vector field $\xi_1$ with $B_{\xi_1} = B_{\xi_0}$ and a basis of arcs $T$ such that $W_{\xi_1}(g(\alpha_i)) = W_{\xi_1}(\alpha_i)$ for all $\alpha_i \in T$ and all $g \in G$.
 \end{enumerate}
\end{prop}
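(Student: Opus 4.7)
The plan is to interpret a coboundary witness $y \in H_g^n$ for $\overline{C(\xi_0)}|_G$ as the data of a modification of $\xi_0$ to a new non-vanishing vector field $\xi_1$ with $B_{\xi_1} = B_{\xi_0}$ and $\overline{C(\xi_1)}|_G = 0$, in direct analogy with the role of the vector field $\phi$ in Lemma \ref{L:cohomVF}. Conditions (a) and (b) will then package, via Poincar\'e--Lefschetz duality, the $G$-invariance of $\xi_1$-winding numbers on closed curves and on basis arcs, respectively; these together span $H_1(\Sigma_g, Q')$.

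For the \emph{forward direction}, given $\overline{C(\xi_0)}(g) = (g_*-1) y$ on $G$ for some $y \in H_g^n$, I would first lift $y$ to $\tilde y \in H_g^{n+1}$ via the short exact sequence $0 \to \langle [\Delta_{n+1}] \rangle \to H_g^{n+1} \to H_g^n \to 0$, and invoke the ``moreover'' clause of Lemma \ref{L:cohomVF} to realize the cocycle $C(\xi_0) - \delta(\tilde y)$ as $C(\xi_1)$ for some non-vanishing vector field $\xi_1$ with $B_{\xi_1} = B_{\xi_0}$. Descending to $\Mod_{g,n+1}$ via Lemma \ref{L:barCpsi} will yield $\overline{C(\xi_1)}|_G = 0$; evaluation on any basis of arcs in $\wt H_0(Q')$ then produces (b). For (a), I would set $\gamma$ to be the Poincar\'e--Lefschetz dual in $H_1(\Sigma_g, Q)$ of $v := W_{\xi_1} - W_\eta \in H^1(\Sigma_{g,n+1})$. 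Evaluation of $v$ on the meridian $\Delta_i$ equals $W_{\xi_0}(\Delta_i) - W_\eta(\Delta_i)$, so $\delta\gamma = D_\eta$, and $G$-invariance of $\gamma$ follows from $G$-invariance of $W_\eta$ (by $G \le \Mod_{g,n+1}[\eta]$) together with $G$-invariance of $W_{\xi_1}$ on closed curves (by $\overline{C(\xi_1)}|_G = 0$).

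For the \emph{backward direction}, I would reverse this construction. From (a), I would use Proposition \ref{prop:framingchar} to build a vector field $\xi_1^a$ with $B_{\xi_1^a} = B_{\xi_0}$ realizing $W_\eta + p^* v^\gamma$, where $v^\gamma \in H^1(\Sigma_{g,n+1})$ is the Lefschetz dual of $\gamma$; by construction, $\xi_1^a$ has $G$-invariant winding numbers on closed curves. From (b) I have a separate vector field $\xi_1^b$ with the same boundary signature whose winding numbers on the arcs of $T$ are $G$-invariant. The two differ by some class $u \in H^1(\Sigma_g)$, and the crucial step is to reconcile them into a single $\xi_1$ satisfying both invariance conditions simultaneously, so that $\overline{C(\xi_1)}|_G = 0$. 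The resulting coboundary witness $y \in H_g^n$ is then the Lefschetz dual of $W_{\xi_0} - W_{\xi_1}$.

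\emph{Main obstacle.} The hard part will be the reconciliation step in the backward direction: the vector fields $\xi_1^a$ and $\xi_1^b$ may genuinely differ, and I will need to analyze how their discrepancy $u \in H^1(\Sigma_g)$ interacts with the requirement that $G$ preserve winding numbers on both closed curves and arcs. This will involve careful bookkeeping using the extension $H_1(\Sigma_g) \to H_1(\Sigma_g, Q) \to \wt H_0(Q)$ and its Lefschetz dual on cohomology. Secondary technicalities include ensuring compatibility of the lift $\tilde y$ of $y$ modulo $\langle [\Delta_{n+1}] \rangle$, and verifying that the descent from the boundary-component setting of Lemma \ref{L:cohomVF} to the puncture setting via Lemma \ref{L:barCpsi} respects coboundaries throughout.
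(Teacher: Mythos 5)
Your forward direction is essentially the paper's argument and is fine: the coboundary witness $y$ is absorbed into $\xi_0$ via Lemma \ref{L:cohomVF} to produce $\xi_1$ with $B_{\xi_1}=B_{\xi_0}$ and $\overline{C(\xi_1)}|_G = 0$, and conditions (a), (b) are then read off by evaluating against closed curves and against a basis of arcs, respectively (the paper packages the closed-curve half as a separate statement, Lemma \ref{L:MoreGeneralCoboundary}, but the content is the same as your duality argument for $\gamma = PD(W_{\xi_1}-W_\eta)$).

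The gap is in the backward direction, and it is exactly the step you flag as the ``main obstacle'': you do not say how to reconcile $\xi_1^a$ and $\xi_1^b$, and as framed the reconciliation is genuinely problematic. If $W_{\xi_1^b} = W_{\xi_1^a} + p^*u$, then $\overline{C(\xi_1^b)}|_G = \overline{C(\xi_1^a)}|_G + \delta(\bar u)$ for the dual class $\bar u \in H_g^n$, and hypothesis (b) only gives $\pair{\overline{C(\xi_1^a)}(g),\alpha_i} = -\pair{(g_*-1)\bar u, \alpha_i}$; there is no reason for the right-hand side to vanish, so neither candidate vector field works on the nose and it is not clear how to manufacture a third. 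The paper avoids this by never treating (a) and (b) symmetrically. It runs the coefficient sequence $0 \to K_g^n \to H_g^n \to H \to 0$ through $H^1(G;-)$: condition (a) (via Lemma \ref{L:MoreGeneralCoboundary}) says precisely that $\pi_*\overline{C(\xi_0)}|_G$ dies in $H^1(G;H)$, so by exactness $\overline{C(\xi_0)}|_G = \phi + \delta y$ with $\phi$ an honest homomorphism $G \to K_g^n$ (trivial coefficients), which Lemma \ref{L:cohomVF} realizes as $\overline{C(\xi_1)}|_G$ for a single vector field $\xi_1$ with $B_{\xi_1}=B_{\xi_0}$. The decisive structural fact missing from your plan is that $K_g^n$ is Lefschetz-dual to $\wt H_0(Q')$: a $K_g^n$-valued homomorphism automatically annihilates the absolute classes $H$ and therefore vanishes if and only if it pairs to zero against a basis of arcs, which is exactly what (b) supplies. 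So the argument should be organized around this asymmetric use of (a) \emph{before} (b) -- first reduce the coefficients to $K_g^n$, then detect the residual homomorphism with arcs -- rather than around merging two independently constructed vector fields.
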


Combining this with Theorem \ref{T:MainMCGCohomology}, we obtain the culminating result of the Appendix.

\begin{cor} \label{C:zerorestrict}
    Let $\Sigma_g$ be a surface of genus $g \ge 2$, and let $Q = \{p_1, \dots, p_{n+1}\}$ be a set of marked points. Let $Q' = Q \setminus \{p_{n+1}\}$. 
    Let $G \le \Mod(\Sigma_g \setminus Q)$ be a subgroup. Suppose $G$ satisfies the following conditions:
    \begin{enumerate}
        \item There is a framing $\eta$ of $\Sigma_g \setminus Q$ for which $G \le \Mod(\Sigma_g \setminus Q)[\eta]$.
        \item There is a basis $\tau$ of arcs in $\wt{H}^0(Q)$  such that $g[\alpha_i] = [\alpha_i]$ in $H_1(\Sigma_g, Q)$ for all $g \in G$ and $\alpha_i \in \tau$.
        \item There is a non-vanishing vector field $\xi_1$ on $\Sigma_g \setminus Q$ with zero winding number about each $p_i \in Q'$ such that $W_{\xi_1}(g(\beta_i)) = W_{\xi_1}(\beta_i)$ for all $g \in G$ and for all arcs $\beta_i$ forming a basis of $\wt{H}_0(Q')$. 
    \end{enumerate}
    Then the restriction map $H^1(\Mod_{g,n+1}; H_1(\Sigma_g\setminus Q')) \to H^1(G; H_1(\Sigma_g\setminus Q'))$ is zero.
\end{cor}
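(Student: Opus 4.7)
The plan is to combine the generator description in Theorem \ref{T:MainMCGCohomology} with the vanishing criterion in Proposition \ref{P:ArcFramedMCG}. By Theorem \ref{T:MainMCGCohomology}, the module $H^1(\Mod_{g,n+1}; H_1(\Sigma_g \setminus Q')) \cong A$ is generated by the change-of-relative-winding-number cocycle $\overline{C(\xi_0)}$, where $\xi_0$ is any non-vanishing vector field on $\Sigma_g \setminus Q$ whose winding numbers about $p_1, \dots, p_n$ all vanish. Consequently the restriction map in question is zero if and only if $\overline{C(\xi_0)}$ restricts to a coboundary on $G$, and by Proposition \ref{P:ArcFramedMCG} this is equivalent to checking its two conditions (a) and (b) for $G$.

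To verify condition (a), I would invoke hypothesis (2) of the corollary: the arcs $\tau = \{\alpha_1, \dots, \alpha_n\}$ have boundary classes $\delta[\alpha_i]$ forming a basis of $\wt H_0(Q)$. One can therefore solve the linear system $\sum_i c_i\, \delta[\alpha_i] = D_\eta$ over $A$ and set $\gamma := \sum_i c_i [\alpha_i] \in H_1(\Sigma_g, Q)$, so that $\delta\gamma = D_\eta$. Since each $[\alpha_i]$ is $G$-fixed in $H_1(\Sigma_g, Q)$ by assumption, the class $\gamma$ is too, which is precisely condition (a).

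To verify condition (b), I would take directly the vector field $\xi_1$ and the basis of arcs $\{\beta_1, \dots, \beta_{n-1}\}$ of $\wt H_0(Q')$ supplied by hypothesis (3); the $G$-invariance of the winding numbers $W_{\xi_1}(\beta_i)$ is exactly the required condition. It remains to check $B_{\xi_1} = B_{\xi_0}$. For $i = 1, \dots, n$ this is immediate: $W_{\xi_0}(\Delta_i) = 0$ by the choice of $\xi_0$ and $W_{\xi_1}(\Delta_i) = 0$ by hypothesis. For the final component, homological coherence (Equation \eqref{eqn:homcoh}), applied to the full surface $\Sigma_g^{n+1}$, forces $W_{\xi_0}(\Delta_{n+1}) = W_{\xi_1}(\Delta_{n+1}) = \chi(\Sigma_g^{n+1})$, so the boundary signatures agree.

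This argument is essentially a bookkeeping translation between the hypotheses of the corollary and those of Proposition \ref{P:ArcFramedMCG}, with the substantive work already located in Theorem \ref{T:MainMCGCohomology} and Proposition \ref{P:ArcFramedMCG}. The only mildly subtle point is that the corollary imposes no direct constraint on the $p_{n+1}$-winding number of $\xi_1$; this is what one should flag as the potential obstacle, but it dissolves because homological coherence pins down the missing winding number from the vanishing conditions at the other $n$ marked points, so that the two signatures end up forced to be equal.
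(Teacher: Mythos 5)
Your proposal is correct and follows exactly the route the paper intends: the corollary is presented there as the direct combination of Theorem \ref{T:MainMCGCohomology} (which identifies $\overline{C(\xi_0)}$ as the generator) with Proposition \ref{P:ArcFramedMCG}, and your verification of conditions (a) and (b) from the corollary's hypotheses — including the observation that homological coherence forces $W_{\xi_1}(\Delta_{n+1}) = W_{\xi_0}(\Delta_{n+1}) = \chi(\Sigma_g^{n+1})$ and hence $B_{\xi_1} = B_{\xi_0}$ — is the bookkeeping the paper leaves implicit. No gaps.
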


Let $H$ denote $H_1(\Sigma_g)$. As always, $H_g^n$ denotes $H_1(\Sigma_{g,n}) \cong H_1(\Sigma_g^n) = H_1(\Sigma_g\setminus Q')$. The proof of Proposition \ref{P:ArcFramedMCG} is based around an analysis of the sequence of coefficients
\begin{equation}\label{E:SES-PuncturedH1}
 0 \ra K_g^n \ra H_g^n \xra{\pi} H \ra 0.   
\end{equation} 

For any subgroup $G \le \Mod_{g,n+1}$, this induces the exact sequence
\begin{align}\label{E:Gexact}
    H^1(G; K_g^n) \to H^1(G; H_g^n) \to H^1(G;H).
\end{align}

Condition (a) of Proposition \ref{P:ArcFramedMCG} expresses the condition for the cocycle $\overline{C(\xi_0)}$ to project to a coboundary in $Z^1(G,H)$, and condition (b) then expresses the condition under which the induced element of $H^1(G;K_g^n)$ is trivial. We will analyze each of these conditions in turn.

\begin{lem}\label{L:MoreGeneralCoboundary}
Let $\eta$ be a framing of $\Sigma\setminus Q$ and let $G \leq \mathrm{Mod}_{g,n+1}[\eta]$ be a subgroup of the associated framed mapping class group. The following are equivalent:
\begin{enumerate}
    \item $\pi_*(\overline{C(\xi_0)}) \in H^1(G; H) = 0$.
    \item There is a $G$-invariant element $\gamma \in H_1(\Sigma_g, Q)$ with $\delta(\gamma) = D_\eta$. 
\end{enumerate}
\end{lem}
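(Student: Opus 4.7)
The plan is to identify the projected cocycle $\pi_*(\overline{C(\xi_0)})$ explicitly as a ``change of relative homology class'' cocycle associated to a specific element $\tilde\gamma \in H_1(\Sigma_g, Q)$ canonically built from the pair of framings $(\xi_0, \eta)$, and then observe that condition (2) is exactly the condition for this cocycle to be a coboundary on $G$.

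The first step is to construct $\tilde\gamma$. By Proposition \ref{prop:framingchar}, the classes $W_{\xi_0}, W_\eta \in H^1(UT\Sigma_{g,n+1})$ both evaluate to $1$ on the fiber class, so their difference can be written as $p^*v$ for a unique class $v \in H^1(\Sigma_{g,n+1})$. Via Lefschetz duality $H^1(\Sigma_g \setminus Q) \cong H_1(\Sigma_g, Q)$, this corresponds to a class $\tilde\gamma \in H_1(\Sigma_g, Q)$. Evaluating $v$ on each peripheral loop $\Delta_i$ and translating through Lefschetz duality shows
\[
\delta(\tilde\gamma) = \sum_i (W_{\xi_0}(\Delta_i) - W_\eta(\Delta_i)) p_i = D_\eta.
\]

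Next, I would compute $\pi_*(\overline{C(\xi_0)})$ in terms of $\tilde\gamma$. For $f \in G$ and $a \in H_1(\Sigma_g)$ (viewed as a closed cycle in $\Sigma_g \setminus Q'$), the cocycle value pairs with $a$ to give $W_{\xi_0}((f_*-1)a)$. Decomposing $W_{\xi_0} = W_\eta + p^*v$ and using that $W_\eta$ is $f$-invariant on closed cycles (since $f \in \Mod_{g,n+1}[\eta]$), this simplifies to $v((f_*-1)a) = \langle (f_*-1)\tilde\gamma, a\rangle$, by Lefschetz duality together with $f$-invariance of the intersection pairing. Since $f$ fixes each marked point, $\delta\bigl((f_*-1)\tilde\gamma\bigr) = (f_*-1)D_\eta = 0$, so $(f_*-1)\tilde\gamma$ lies in $H \subseteq H_1(\Sigma_g, Q)$. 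Thus, under Poincar\'e duality, $\pi_*(\overline{C(\xi_0)})(f) = (f_*-1)\tilde\gamma \in H$.

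With the cocycle identified, the equivalence $(1) \Leftrightarrow (2)$ is direct. The cocycle $f \mapsto (f_*-1)\tilde\gamma$ is a coboundary on $G$ iff there exists $x \in H$ with $(f_*-1)(\tilde\gamma - x) = 0$ for all $f \in G$, iff the class $\gamma := \tilde\gamma - x \in H_1(\Sigma_g, Q)$ is $G$-invariant. Since $x \in H$ has trivial $\delta$, one has $\delta(\gamma) = D_\eta - 0 = D_\eta$, giving condition (2). Conversely, given $\gamma$ satisfying (2), the class $x := \tilde\gamma - \gamma$ satisfies $\delta x = 0$, hence lies in $H$, and the coboundary equation follows.

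The main obstacle is keeping careful track of the Poincar\'e--Lefschetz duality identifications so that the $\Mod_{g,n+1}$-action translates correctly through each step — in particular, handling the $f_*$ vs.\ $f_*^{-1}$ signs that arise from dualizing the action of $f$ on $H_1$ against $H^1$, and verifying that the ``absolute'' projection $\pi: H_g^n \to H$ is exactly what picks out the pairing with $H_1(\Sigma_g) \hookrightarrow H_1(\Sigma_g, Q')$. Once these bookkeeping details are organized, the rest is a direct comparison of cocycles.
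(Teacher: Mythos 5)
Your proposal is correct and follows essentially the same route as the paper: both use Proposition \ref{prop:framingchar} to write $W_{\xi_0} = W_\eta + p^*v$, use membership in $\Mod_{g,n+1}[\eta]$ to kill the $W_\eta$ term, identify $v$ with a relative class of boundary $D_\eta$ via Lefschetz duality, and conclude that the coboundary condition is exactly the $G$-invariance of $\gamma = v - y$ for some $y \in H$. The only cosmetic difference is that you package the intermediate computation as an explicit ``change of relative homology class'' cocycle $(f_*-1)\tilde\gamma$, whereas the paper works directly with the coboundary equation; the content is identical.
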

\begin{proof}
By definition, the class of the restriction $\pi_*(\overline{C(\xi_0)})$ to $G$ is zero in $H^1(G;H)$ if and only if the cocycle $\pi_*(\overline{C(\xi_0)})$ is a coboundary. We give an explicit description of what this means. Viewing $\overline{C(\xi_0)}: G \to H_g^n$ as a cocycle, we first interpret $H_g^n = H_1(\Sigma_g\setminus Q')$ as dual (via Lefschetz duality) to $H_1(\Sigma_g,Q')$. Under this identification, the projection $\pi: H_g^n \to H$ corresponds to a restriction of the domain of $\overline{C(\xi_0)}(g) \in H_1(\Sigma_g,Q')^*$ to the subspace $H$.

Unwinding the definitions, if this projection is a coboundary, then there exists $y \in H$ such that for all $x \in H$ and all $g \in G$,
\begin{align} \label{E:coboundary1}
    W_{\xi_0}(g(\hat x)) - W_{\xi_0}(\hat x) = \pair{(g_*-1)y,x}, 
\end{align}

where, as always, $\hat x$ denotes the Johnson lift of a simple closed curve on $\Sigma_{g,n}$ with homology class $x \in H$. 

Recall from Proposition \ref{prop:framingchar} that if $\psi, \psi'$ are any two non-vanishing vector fields on $\Sigma_{g,n+1}$, then there exists $v \in H^1(\Sigma_{g,n+1})$ such that
\[
W_{\psi'} = W_{\psi} + p^*v
\]

Applying this to $\xi_0$ and $\eta$, there is $v \in H^1(\Sigma_{g,n+1})$ for which $W_{\xi_0} = W_{\eta} + p^* v$. Note that if $\Delta_i$ is the loop around $p_i \in Q$, then 
\[
v(\Delta_i) = W_{\xi_0}(\hat\Delta_i) - W_{\eta}(\hat \Delta_i).
\]
Under the Lefschetz duality isomorphism $H^1(\Sigma_g\setminus Q) \cong H_1(\Sigma_g, Q)$, this implies that $\delta(v) = D_\eta$.

Returning to \eqref{E:coboundary1}, we can make two adjustments. First, treating the winding number function as a cohomology class, the left-hand term can be combined into $W_{\xi_0}((g_*-1)\hat x)$. Secondly, expanding $W_{\xi_0} = W_{\eta} + p^* v$, we observe that since $g \in G \le \Mod_{g,n+1}[\eta]$, the term $W_{\eta}((g_*-1)\hat x) = 0$. Thus \eqref{E:coboundary1} reduces to
\[
(g_*-1)v(x) = \pair{(g_*-1)y,x}.
\]
Since $y \in H = \ker \delta \le H_{g,n+1}$, the element $\gamma  = v - y \in H^1(\Sigma_{g,n+1}) \cong H_1(\Sigma_g,Q)$ is the desired $G$-invariant class with boundary $\delta(\gamma) = D_\eta$ that exists if and only if the restriction of $\pi_*(\overline{C(\xi_0)})$ to $H^1(G;H)$ is zero.
\end{proof}

\begin{proof}[Proof of Proposition \ref{P:ArcFramedMCG}:] Recall the exact sequence of \eqref{E:Gexact}:   
\[ 
H^1(G;K_g^n) \ra H^1(G;H_g^n) \xra{\pi_*} H^1(G;H). 
\]
By Lemma \ref{L:MoreGeneralCoboundary}, $\pi_*(\overline{C(\xi_0)})$ is a coboundary if and only if $G$ fixes some $\gamma \in H_1(\Sigma_g,Q)$ with $\delta(\gamma) = D_\eta$. Supposing this is the case, it follows by exactness of \eqref{E:Gexact} that there is an expression
\[
\overline{C(\xi_0)}(g) = \phi(g) + (g_*-1)y
\]
for some $y \in H_g^n$ and some homomorphism $\phi: G \to K_g^n$ (and all $g \in G$). 

Lifting $y \in H_g^n$ to $\wt y \in H_g^{n+1}$ and $\overline{C(\xi_0)} \in Z^1(\Mod_{g,n+1}, H_g^n)$ to $C(\xi_0) \in Z^1(\Mod_{g}^{n+1}, H_g^{n+1})$, it follows from Lemma \ref{L:cohomVF} that there is a non-vanishing vector field $\xi_1$ with $B_{\xi_1} = B_{\xi_0}$ such that
\[
\phi(g) = \overline{C(\xi_0)}(g) - (g_*-1)y = \overline{C(\xi_1)}(g)
\]

Thus $\overline{C(\xi_0)}$ (equivalently, $\overline{C(\xi_1)}$) is a coboundary if and only if $\phi$ is the zero homomorphism. Under the Lefschetz duality isomorphism $H_1(\Sigma_g \setminus Q') \cong H_1(\Sigma_g, Q')^*$, the subspace $K_g^n \le H_1(\Sigma_g \setminus Q')$ is dual to the quotient space $\wt H_0(Q')$ of $H_1(\Sigma_g, Q')$. Thus $\phi(g) \equiv 0$ if and only if $\pair{\phi(g),x} = 0$ for all $x \in H_1(\Sigma_g, Q')$, if and only if $\pair{\phi(g),\alpha_i} = 0$ for some set of $\alpha_i$ whose images span $\wt H_0(Q')$, i.e. for some basis of arcs. As $\phi(g) = \overline{C(\xi_1)}(g)$, this latter condition is seen to be the condition (b) of Proposition \ref{P:ArcFramedMCG}.
\end{proof}

\bibliography{mybib}{}
\bibliographystyle{amsalpha}
\end{document}